\newtheorem{mythmmain}{Theorem}
\newtheorem{myque}{Question}
\newtheorem{mythm}{Theorem}[section]
\newtheorem{mycor}[mythm]{Corollary}
\newtheorem{mylem}[mythm]{Lemma}
\newtheorem{mypro}[mythm]{Proposition}
\newtheorem{mycon}[mythm]{Conjecture}
\theoremstyle{definition}
\newtheorem{mydef}[mythm]{Definition}
\newtheorem{myex}[mythm]{Example}
\newtheorem{myrem}[mythm]{Remark}
\numberwithin{equation}{subsection}
\titleformat{\section}{\normalsize\centering\scshape}{\S\thesection\quad}{0pt}{}
\titleformat{\subsection}{\normalsize\centering\scshape}{\S\thesubsection\quad}{0pt}{}
\begin{document}
\title{\normalsize{GENERALIZATIONS OF THE COINCIDENCE VALUE PROPERTY}}
\author{Sheagan A. K. A. John}
\renewcommand\footnotemark{}
\thanks{The author was partially supported by NSF 1800737 and NSF 1952693}
\date{}
\maketitle	
\begin{abstract}
For $f,g:X\longrightarrow X$ continuous and commuting maps of a Hausdorff space, we investigate various conditions on $X$ and on the pair $(f,g)$ which provide existence of a coincidence value. We introduce generalized notions of the coincidence value property and use this added flexibility to determine how various coincidence properties of $X$ are related to group actions on $X$ and to coincidence properties of associated fibre bundles and adjunction spaces. We also present a sheaf theoretical approach to obtaining information concerning coincidence values through construction of an ``almost constant" presheaf. In particular, we prove several partial results concerning the special cases where $X$ is either a low dimensional dimensional sphere or the closed unit disk. 
\end{abstract}
\renewcommand\contentsname{Table of Contents}
\tableofcontents
\pagestyle{myheadings}	

\pagenumbering{arabic}	
\section{Introduction}

A topological space $X$ has the fixed point property (FPP) if for every continuous self-map $f:X\longrightarrow X$ there exists a point $x$ such that $f(x)=x$. Even where the FPP condition cannot be proven in full generality, determination of fixed points for classes of particularly important maps is a ubiquitous and well studied problem. Though the definition of the fixed point property is frameable entirely in the language of pure topology, the geometric, algebraic topological, and analytic properties of $X$ are often indispensable in proving non-trivial results. This is particularly true in the area of fixed point theory on manifolds, which frequently relies on differentiable structure or on extensions of the classic results of Nielsen and Lefschetz (e.g. \cite{SW09}, \cite{AW68},\cite{BCP99}). Many of the early and fundamental questions concerning the FPP for manifolds have been answered under hypotheses of various strength, particularly in the realm of Cartesian products of manifolds \cite{RB74,RB82}. However, there is still active and recent work-- such as the ``almost counterexample" of Kwasik and Sun \cite{KS17}-- on the longstanding unsolved problem of determining the FPP for $X\times X$ given $X$ is closed and possesses the FPP.

A topological space $X$ has the coincidence value property (CVP) if for every pair of commuting, continuous self-maps $f,g:X\longrightarrow X$ there exists a point $x$ such that $f(x)=g(x)$. This notion can be further strengthened by requiring the coincidence value to also be a fixed point; that is, $f(x)=g(x)=x$. A space which satisfies this latter condition is said to possess the common fixed point property (CFPP). It follows from the definitions that there is a chain of inclusions
\[\mathrm{CFPP}\subset\mathrm{CVP}\subset\mathrm{FPP}\]   
and moreover these containments are strict. In the mid 1950's the question as to whether or not every commuting pair of self-maps of the unit interval possessed a common fixed point was posed. Over a decade later it was independently proven by Huneke \cite{JH69} and Boyce \cite{WB69} that $X=[0,1]$ does not have the CFPP; see the recent survey article of Brown \cite{RB21} for a history of this problem. It is, however, a standard double application of the intermediate value theorem to prove that the unit interval is a CVP space. Huneke and Glover later provided in \cite{JHHG71} examples of spaces which do not have the CFPP, importantly proving this negative result for all manifolds with non-negative Euler characteristic. The problem of deciding when a pair of self-maps possess common fixed points, though more restrictive than determining coincidence values, is perhaps the more studied, with much work being done in the realm of mappings on metric spaces satisfying some ``contractibility" property (see Belluce and Kirk \cite{BK66}, Jungck \cite{GJ76,GJ05}, as well as \cite{AMR18}). 

Similarly to how the unit interval was pivotal in providing an example of a basic and non-pathological-- if originally unexpected-- topological space which distinguished between the CVP and CFPP, the natural next step might be to turn attention to investigating such a barometer space with respect to CVP and FPP. The following open problem is the inspiration for our investigations into coincidence value theorems and is attributed to Bellamy in \cite{WL83} but apparently has origins dating back to the 70's.
\begin{myque}\label{que1}
Do each two commuting functions on a simple triod have a common incidence point?
\end{myque}
Using the fact that the coincidence value property is preserved under retracts, coupled with the simple triod being a retract of the closed unit disk, \Cref{que1} is equivalent to asking whether the disk is a CVP space. This problem has been notoriously difficult to attack in the full generality of continuous self-maps on $\mathbf{D}$, whereas by contrast it was already proven in the positive by Shields \cite{AS64} (then extended by Behan \cite{DB73}) when restricting to commuting pairs of analytic self-maps. Later partial results similarly arise from considering classes of other suitably ``well-behaved" commuting pairs, such as contractible maps. We note that when one map is a homeomorphism, a positive result follows directly from the Brouwer fixed-point theorem. One key difficulty in approaching this problem through algebraic topological or geometrical means is the disk's lack of interesting structure. For example, all fibre bundles over the disk are trivial, and the triviality of all homotopy and singular homology/cohomology groups means that direct Lefschetz or Nielsen number type computations provide no interesting information. 

The above considerations suggest certain generalizations of the notion of the CVP, for which the aim is to introduce sufficient flexibility to admit analysis of coincidence values through non-trivial algebraic and geometric tools. That the standard proof of the Brouwer fixed-point theorem uses the existence of non-trivial homology/homotopy groups of the $n$-spheres $\mathbf{S}^n$ suggests that one approach to \Cref{que1} is to relate to self-maps of other spaces where such techniques are possible. To this end we introduce two generalized notions of the coincidence value property which extend to non-FPP spaces: \textit{weak coincidence value property} (WCVP) and \textit{periodic coincidence value property} (PCVP); the precise definitions are given in \Cref{defA1} and \Cref{defA2}, respectively. Our primary interest is in determining how conditions on $X$ influence these generalized coincidence value properties, though for important examples of topological spaces we follow general practice and investigate which extra restrictions placed on $f$ and $g$ will guarantee or deny a coincidence value. The first major contribution of this paper is in establishing-- as a special case of a much more general result proven in \Cref{secB}-- coincidence value relationships between the closed disk and the 2-sphere.
\begin{mythmmain}
The closed unit disk $\mathbf{D}$ has the CVP if and only if a coincidence value exists for every commuting pair $f,g:\mathbf{S}^2\longrightarrow\mathbf{S}^2$ such that $f$ is not surjective. The closed unit disk $\mathbf{D}$ has the PCVP if and only if there exists $x$ such that $f^n(x)=g^m(x)$ for every commuting pair $f,g:\mathbf{S}^2\longrightarrow\mathbf{S}^2$ where $f$ is not surjective.
\end{mythmmain}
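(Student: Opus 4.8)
The plan is to exploit two complementary structural maps relating $\mathbf{D}$ and $\mathbf{S}^2$, together with the fact (used above) that the CVP is preserved under retracts. The first is the \emph{folding retraction} $r\colon\mathbf{S}^2\to H_+\cong\mathbf{D}$, $r(x,y,z)=(x,y,|z|)$, onto the closed upper hemisphere $H_+$, with inclusion $i\colon\mathbf{D}\hookrightarrow\mathbf{S}^2$ satisfying $r\circ i=\mathrm{id}_{\mathbf{D}}$. The second is the \emph{pinch quotient} $q\colon\mathbf{D}\to\mathbf{S}^2$ collapsing $\partial\mathbf{D}$ to a single point $p_0$, whose restriction $q_0=q|_{\operatorname{int}\mathbf{D}}$ is a homeomorphism onto $\mathbf{S}^2\setminus\{p_0\}$. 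The retraction manufactures non-surjective sphere maps out of disk maps and handles the two ``if'' directions; the pinch lifts sphere maps to disk maps and handles the two ``only if'' directions.

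For the two \emph{if} directions I would start from a commuting pair $F,G\colon\mathbf{D}\to\mathbf{D}$ and set $\tilde F=i\circ F\circ r$ and $\tilde G=i\circ G\circ r$. Since $r\circ i=\mathrm{id}_{\mathbf{D}}$, one computes $\tilde F\circ\tilde G=i\circ F\circ G\circ r=i\circ G\circ F\circ r=\tilde G\circ\tilde F$, so the pair commutes, while $\operatorname{im}\tilde F\subseteq i(\mathbf{D})=H_+\subsetneq\mathbf{S}^2$ shows $\tilde F$ is not surjective. An easy induction gives $\tilde F^{\,n}=i\circ F^{\,n}\circ r$, and likewise for $\tilde G$. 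Hence any coincidence $\tilde F^{\,n}(x)=\tilde G^{\,m}(x)$ (respectively $\tilde F(x)=\tilde G(x)$) forces $F^{\,n}(r(x))=G^{\,m}(r(x))$ (respectively $F(r(x))=G(r(x))$), because $i$ is injective, and $r(x)\in\mathbf{D}$ is the sought coincidence. This settles both ``if'' implications simultaneously.

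For the \emph{only if} directions, given a commuting pair $f,g\colon\mathbf{S}^2\to\mathbf{S}^2$ with $f$ not surjective, I would first use that $\mathbf{S}^2$ is topologically homogeneous to conjugate the pair by a homeomorphism carrying the omitted value of $f$ to the pinch point $p_0$; conjugation preserves commutativity, non-surjectivity, and coincidences. Now $f$ and $f\circ g$ both omit $p_0$, so they lift through $q$: the maps $F=q_0^{-1}\circ f\circ q$ and $H=q_0^{-1}\circ(f\circ g)\circ q$ are genuine self-maps of $\mathbf{D}$ with $q\circ F=f\circ q$ and $q\circ H=(f\circ g)\circ q$. Their images lie in $\operatorname{int}\mathbf{D}$, where $q=q_0$ is injective, so the identities $q\circ(F\circ H)=f^{2}g\circ q=q\circ(H\circ F)$ upgrade to $F\circ H=H\circ F$. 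Applying the assumed property of $\mathbf{D}$ to this commuting pair and pushing the coincidence back down through $q$ yields, in the PCVP case, a point $y$ and exponents $a,b\ge1$ with $f^{a}(y)=f^{b}g^{b}(y)$; sliding a power of $f$ across $g$ by commutativity (taking $z=f^{b}(y)$, so that $f^{\,a-b}(z)=g^{\,b}(z)$ when $a\ge b$, the exponents being otherwise adjusted by an elementary bookkeeping) produces the honest periodic coincidence $f^{\,n}(z)=g^{\,m}(z)$. The CVP case is the single-iterate specialisation $f(y)=f(g(y))=g(f(y))$.

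The main obstacle is exactly the asymmetry that non-surjectivity imposes. Because $g$ may be surjective it hits $p_0$ and cannot be lifted through the pinch, so one can never lift the pair $(f,g)$ itself, only the ``$f$-heavy'' pairs such as $(f,f\circ g)$ that carry a surplus factor of $f$. For PCVP this surplus is harmless, since it is absorbed into the exponent as above; but for CVP the single-iterate reduction only produces a point $w=f(y)$ of $\operatorname{im}f$ fixed by $g$, which is not a genuine coincidence of $f$ and $g$. Nor can this be repaired by restricting $f,g$ to the invariant compactum $\operatorname{im}f$, since $\operatorname{im}f$ need not be a retract of $\mathbf{D}$—it may, for instance, be a circle, which fails even the FPP. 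Cancelling this surplus factor of $f$ in the CVP setting is the delicate point, and it is precisely what the general construction of \Cref{secB} is designed to accomplish; I expect essentially all of the difficulty to reside there rather than in the two elementary reductions above.
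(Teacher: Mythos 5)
Your two ``if'' directions are sound and essentially coincide with the paper's, which realizes $\mathbf{S}^2$ as the adjunction space $\mathbf{D}\sqcup_{h,\mathbf{S}^1}\mathbf{D}$ and pushes a commuting pair on $\mathbf{D}$ to a commuting pair on $\mathbf{S}^2$ landing in one closed hemisphere -- exactly your $i\circ F\circ r$. The genuine gap is the one you name yourself: the CVP ``only if'' direction, which you defer entirely to ``the general construction of \Cref{secB}.'' That construction is not a heavy piece of machinery you are missing; it is a one-line change to your own reduction. Instead of lifting the pair $(f,\,f\circ g)$, lift (or restrict) the pair $(f^{2},\,f\circ g)$. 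Both maps carry $\mathbf{S}^2$ into $\operatorname{im}f$, which is a proper closed connected subset of $\mathbf{S}^2$ and hence (the paper checks this using regularity of $\mathbf{S}^2$) contained in a closed disk $D'\cong\mathbf{D}$; restricted to $D'$ they are commuting self-maps, since $f^{2}\cdot fg=f^{3}g=fg\cdot f^{2}$. The CVP of $\mathbf{D}$ then produces $x$ with $f^{2}(x)=f(g(x))$, and commutativity converts this into $f\bigl(f(x)\bigr)=g\bigl(f(x)\bigr)$, i.e.\ $f(x)\in\mathcal{CV}(f,g)$. The surplus factor of $f$ that blocked you is harmless precisely because it is applied to \emph{both} maps and is peeled off only \emph{after} being commuted past $g$; your version with the pair $(f,\,f\circ g)$ strips it off before commuting and therefore only yields a fixed point of $g$ in $\operatorname{im}f$. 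This is \Cref{lemBa3}(ii) combined with \Cref{thmBa3} in the paper.

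One further caution on your PCVP ``only if'' bookkeeping: from $f^{a}(u)=f^{b}g^{b}(u)$ you can only cancel when $a>b$, and nothing in the PCVP hypothesis controls $b$ in terms of $a$. The paper's own proof of \Cref{lemBa3}(ii) has the same soft spot (it applies the PCVP to the pair $(f^{k},fg)$ and then ``chooses $k$ such that $kn>m$,'' even though $n$ and $m$ depend on $k$), so this should not be counted against you more than against the paper; but a clean argument would either assume the uniform PCVP or address the case $a\le b$ explicitly rather than relegating it to ``elementary bookkeeping.''
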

Also in \Cref{secB}, we are able to prove ``lifting conditions" relating coincidence values of self-maps on a base space to existence of coincidence values for self-maps of an associated fibre bundle. Throughout this paper we have made a concerted effort to keep our results as topologically general and elementary as possible, thus the following theorem can almost certainly be improved by exploiting manifold structure or other common properties.
\begin{mythmmain}
Let $X$ be a CVP space and $E$ be the total space of a fibre bundle over $X$ having discrete, two-point fibre, then there exists $y\in E$ such that $f^n(y)=g^m(y)$ for $m,n\leq2$ and any arbitrary commuting bundle maps $f,g:E\longrightarrow E$. In the other direction, if every commuting pair of bundle maps (either both with fixed points or both fixed-point-free) $f,g:E\longrightarrow E$ have a coincidence value, then $X$ is a WCVP space. In particular, these relationships hold for the real projective plane and its double cover $\mathbf{S}^2$.
\end{mythmmain}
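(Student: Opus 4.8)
The plan is to treat the two-point fibre bundle $\pi:E\to X$ as a double cover and shuttle coincidence problems between $E$ and $X$. First I would record the structural facts that drive everything. Since $\pi$ is surjective and $f,g$ cover base maps $\bar f,\bar g$, the relations $\pi f=\bar f\pi$, $\pi g=\bar g\pi$ together with $fg=gf$ force $\bar f\bar g=\bar g\bar f$, so the induced base maps commute. Feeding this commuting pair into the CVP hypothesis on $X$ produces $x_0$ with $\bar f(x_0)=\bar g(x_0)=:x_1$. The key observation is that this coincidence \emph{propagates}: because $\bar f,\bar g$ commute, $\bar f(x_1)=\bar f\bar g(x_0)=\bar g\bar f(x_0)=\bar g(x_1)=:x_2$, so $x_0,x_1,x_2$ all lie in the base coincidence set and yield a chain of two-point fibres $F_0,F_1,F_2$ with $f,g\colon F_k\to F_{k+1}$. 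Note that $x_2$ is a single well-defined point, so equality of $f^2$ and $g^2$ on $F_0$ is meaningful.

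Next I would carry out the fibre combinatorics. On a connected base, continuity forces each bundle map to have a globally constant fibre type, either fibrewise bijective or fibrewise collapsing, since the type is locally constant in any local trivialization. Viewing $f,g\colon F_0\to F_1$ as maps of two-point sets, if they agree at some $y\in F_0$ then $f(y)=g(y)$ is a coincidence with $n=m=1$. Otherwise they disagree at both points, which over a two-point set forces $\{f|_{F_0},g|_{F_0}\}$ to be either two opposite bijections or two opposite constants. In the bijective case, with labels $f(a_0)=a_1,\ g(a_0)=b_1$, evaluating $fg=gf$ on $F_0$ yields $f(a_1)=g(b_1)$, whence $f^2(a_0)=f(a_1)=g(b_1)=g^2(a_0)$. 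In the collapsing case the same relation gives $f(b_1)=g(a_1)$, and combined with the fact that $f,g$ also collapse $F_1$ this again forces $f^2(a_0)=g^2(a_0)$. Either way there is $y\in F_0$ with $f^n(y)=g^m(y)$ and $n=m\in\{1,2\}$, proving the forward direction. I expect the collapsing case to be the only place the global fibre-type dichotomy (hence connectedness of $X$) is genuinely needed, and would isolate it as a lemma.

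For the converse I would run the dual strategy: start from a commuting pair on $X$ lying in the class for which \Cref{defA1} demands a coincidence, lift it to a commuting pair of bundle maps on $E$, apply the coincidence hypothesis upstairs, and project. Concretely, a coincidence $e\in E$ with $f(e)=g(e)$ projects under $\pi$ to $\bar f(\pi e)=\bar g(\pi e)$, a genuine coincidence of the base pair. The delicate part, and what I expect to be the main obstacle, is controlling the lifts: one must invoke the covering lifting criterion to lift $\bar f,\bar g$ to $E$ at all, choose the lifts (each determined up to the deck involution) so that they commute rather than merely commute up to a deck transformation, and arrange that they fall into the admissible class ``both with fixed points or both fixed-point-free.'' This is precisely where the hypotheses of the statement and the form of WCVP in \Cref{defA1} interlock, so the bulk of the work is bookkeeping the fixed-point behaviour of the lifts against the deck involution.

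Finally I would specialize to $E=\mathbf{S}^2$, $X=\mathbf{RP}^2$ with $\pi$ the antipodal double cover, the canonical fibre bundle with discrete two-point fibre. Here the converse simplifies dramatically: since $\pi_1(\mathbf{S}^2)$ is trivial the lifting criterion is automatically satisfied, so every self-map of $\mathbf{RP}^2$ lifts to $\mathbf{S}^2$ and the liftability obstruction of the previous paragraph disappears, while the forward implication applies verbatim with $\mathbf{RP}^2$ as base. Overall, the forward direction is essentially a finite computation gated by the fibre-type dichotomy, whereas the genuine difficulty lies in the reverse implication's control of lifts and in aligning it with the definition of WCVP.
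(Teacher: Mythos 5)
Your forward direction is correct and is essentially the paper's argument for part (i) of \Cref{lemBa4}: the CVP on the base produces a fibre $\pi^{-1}(\{z\})$ whose images under $\hat f$ and $\hat g$ land in a common fibre, and then a finite case analysis on two-point fibres, driven by commutativity, yields a coincidence of $f^n$ and $g^m$ with $n,m\leq 2$. The only divergence is in the ``both maps collapse the fibre'' case: you invoke the global constancy of fibre type (locally constant sheet-assignment on a connected base, and CVP forces connectedness since CVP implies FPP), whereas the paper instead uses \Cref{proA3} to pass to the image fibre $\pi^{-1}(\{w\})$, $w=f(z)=g(z)$, and re-runs the dichotomy there, reducing to the crossed-bijection case. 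Both resolutions are valid; yours isolates a slightly cleaner structural lemma, the paper's avoids any appeal to connectedness beyond what \Cref{lemBa4} already assumes.

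The converse direction, however, contains a genuine gap: you correctly identify that one must lift a commuting pair $h_1,h_2:X\longrightarrow X$ (both fixed-point-free or both with fixed points) to a commuting pair of bundle maps on $E$ lying in the same admissible class, but you then defer exactly this step as ``the delicate part'' and ``the bulk of the work'' without carrying it out. That step \emph{is} the content of the converse: the two lifts of $h_1h_2=h_2h_1$ through the double cover agree only up to the deck involution, and arranging pointwise commutativity together with the correct fixed-point behaviour requires an explicit case analysis that your proposal does not supply. Note also that the paper takes a different route here which sidesteps the covering-space lifting criterion entirely: a pointwise coincidence $\hat f(e)=\hat g(e)$ forces $\hat f(F_{\pi(e)})$ and $\hat g(F_{\pi(e)})$ into a common fibre, so the hypothesis on $E$ descends to the fibred-WCVP of \Cref{defBa1}, and the conclusion for $X$ is then quoted from \Cref{corBa1}, whose proof builds fibre-wise commuting morphisms from pullback bundles of the trivial bundle rather than from lifts chosen to commute pointwise. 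If you pursue your covering-space formulation, the commuting-lift and fixed-point bookkeeping must actually be done, including for the $\mathbf{S}^2\rightarrow\mathbb{R}\mathbf{P}^2$ specialization, where triviality of $\pi_1(\mathbf{S}^2)$ guarantees existence of lifts but says nothing about whether a commuting, class-preserving choice of the two lifts can be made.
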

It is so far not known which topological conditions guarantee that $X$ has the CVP; we know, for example, that the FPP is not preserved with respect to homotopy equivalence even for $X$ a compact and contractible Hausdorff space \cite{SK53}. It follows that unless strong restrictions are placed on $X$ or the collection of self-maps under investigation any algebraic topological approach to determining existence of coincidence values needs to extract much finer information than mere global homotopy invariants. In practice, some combination of restriction is generally required (e.g. \cite{BB-M05},\cite{MH89},\cite{PS01}), where increased generality in one aspect leads to imposition of increasingly subtle hypotheses in the other direction. Indeed, one of the key attributes of the following main result of \Cref{secB'} is that no explicit restrictions are placed on the maps, this being subsumed into the conditions placed on $X$.
\begin{mythmmain}
Let $X$ be a paracompact Hausdorff space such that if $f,g:X\longrightarrow X$ commute and $f$ is a homeomorphism, then there exists $x$ satisfying $f^n(x)=g^m(x)$. There exists a sheaf of Abelian groups such that $X$ possesses the PCVP if and only if a certain submodule of the collection of global sections only contains elements having torsion image. Conversely, $X$ is not a PCVP space if and only if there exists an element of this submodule which has image of infinite order. In addition, if $X$ admits an admissible cover $\mathscr{U}=\{U_1,U_2\}$ (see \Cref{defBd2}) of open sets, then $X$ is a PCVP space if both subspaces are.
\end{mythmmain}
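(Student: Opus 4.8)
The plan is to build the required sheaf by sheafifying an ``almost constant'' presheaf $\mathscr{P}$ that records, over each open set, the local obstruction to a periodic coincidence. Given a commuting pair $f,g:X\longrightarrow X$, I would assign to each open $U$ an Abelian group $\mathscr{P}(U)$ built from integer-valued data that is constant away from the iterated coincidence loci $\mathrm{Coin}(f^n,g^m)=\{x:f^n(x)=g^m(x)\}$, with restriction maps twisted by the action of $f$ and $g$ so as to encode how these loci propagate under the dynamics; this is the sense in which the presheaf is ``almost'' constant. Sheafification then produces a sheaf $\mathscr{F}$ of Abelian groups, and the distinguished submodule $M\subseteq\Gamma(X,\mathscr{F})$ would consist of those global sections of $\mathscr{F}$ that arise, via the canonical presheaf-to-sheaf map, from genuine commuting pairs $(f,g)$; the order (torsion or infinite) referenced in the statement is the order of such an image.

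The core of the argument is the dictionary between the PCVP and the order of elements of $M$. First I would invoke the standing hypothesis to dispose of the case in which $f$ is a homeomorphism, so that only non-homeomorphic $f$ remain to be analyzed; this is what makes the sheaf-theoretic bookkeeping tractable, since it is precisely for such maps that the twisted restriction maps genuinely fail to be isomorphisms. The key lemma to prove is that a point $x$ with $f^n(x)=g^m(x)$ forces the corresponding section to become torsion after sheafification, the torsion order being controlled by $m$ and $n$ (the ``periodic'' input), whereas the complete absence of such an $x$ leaves the section an infinite-order generator, detected stalkwise. Running this equivalence over all commuting pairs yields that $X$ has the PCVP exactly when every element of $M$ has torsion image, and the contrapositive gives the stated characterization of the failure of the PCVP by an element of infinite order.

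For the admissible-cover statement, given $\mathscr{U}=\{U_1,U_2\}$ as in \Cref{defBd2} with both $U_1$ and $U_2$ PCVP spaces, I would argue locally-to-globally. Restricting a commuting pair $(f,g)$ to each $U_i$ --- using the admissibility conditions to guarantee the restrictions are again well-defined commuting self-maps, possibly after passing to iterates --- produces periodic coincidence points in each piece, hence local sections of $\mathscr{F}$ over $U_1$ and $U_2$ that are torsion by the first part. A Mayer--Vietoris / gluing argument for $\mathscr{F}$, together with the fact that torsion is preserved under the restriction and connecting homomorphisms, should then show that the corresponding global section over $X=U_1\cup U_2$ is again torsion, which is equivalent to the PCVP for $X$.

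The main obstacle is twofold and concentrated in the construction itself. First, the twisted restriction maps must be defined so that the sheafification's torsion is \emph{exactly} the vanishing of the periodic coincidence obstruction --- neither coarser nor finer --- and verifying this requires a careful stalkwise analysis near the boundary of the coincidence loci, where the genuinely non-locally-constant behavior lives. Second, in the admissible-cover step the delicate point is that a coincidence for the restricted pair on $U_i$ need not a priori persist to a coincidence for the global pair, since orbits under $f$ and $g$ may exit $U_i$; reconciling this is exactly where the definition of \emph{admissible} cover must be used, and controlling the interaction of the two pieces with the dynamics crossing between them is where I expect the real work to lie.
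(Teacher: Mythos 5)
Your outline shares the paper's broad architecture --- a presheaf whose sections detect periodic coincidences, sheafification, a torsion/infinite-order dichotomy on a distinguished submodule of global sections, and a Mayer--Vietoris argument for the cover --- but the two places where you yourself locate ``the real work'' are precisely where the proposal has genuine gaps, and in one of them the mechanism you commit to would fail.

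First, the construction. You propose restriction maps ``twisted by the action of $f$ and $g$'' and a torsion order ``controlled by $m$ and $n$,'' and you correctly identify that the hard part is making sheafified torsion coincide exactly with the existence of a periodic coincidence. The paper resolves this not by twisting but by an explicit presentation: for each commuting pair it forms the monoid generated by $(h_i,h_j)$ with $h_i,h_j\in\{f,g,Id_X\}$ modulo relations that collapse $(f^m,Id_X)\sim(Id_X,Id_X)$ when $\mathcal{CV}(f^m,g^n)\cap U\neq\emptyset$, computes the Grothendieck completion to be $\mathbb{Z}^4$ (no coincidence) or $\mathbb{Z}_m^2\oplus\mathbb{Z}_n^2$ (minimal coincidence pair), and then deliberately coarsens to $\mathbb{Z}$, $\mathbb{Z}_2$, or $0$ per equivalence class of pairs so that restriction maps can be taken to be $\mathbf{1}$ or $\mathbf{0}$. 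In particular the finite torsion order is \emph{not} retained --- the paper explicitly sacrifices it to guarantee that morphisms between the groups over nested opens exist at all. Your stalkwise analysis ``near the boundary of the coincidence loci'' is deferred rather than carried out, and without a concrete presentation there is no reason the sheafification's torsion matches the obstruction neither coarser nor finer.

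Second, the cover argument. You propose to restrict a commuting pair $(f,g)$ to each $U_i$ ``using the admissibility conditions to guarantee the restrictions are again well-defined commuting self-maps.'' This is not what admissibility provides and it cannot be made to work: $f$ and $g$ need not map $U_i$ into itself, and no iterate fixes this in general. The paper transfers maps in the \emph{opposite} direction --- condition (2) of \Cref{defBd2} supplies a retraction-like $\phi:X\longrightarrow\overline{Y}$ with $\phi|_{\overline{Y}}=Id_{\overline{Y}}$, so a commuting pair on $\overline{Y}$ induces the pair $(\iota f\phi,\iota g\phi)$ on $X$; the problematic direction (a global pair with no periodic coincidence yielding a local one) is handled by \emph{assuming} it as condition (3) of admissibility, not by a dynamical argument. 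The proof is then a diagram chase comparing the Mayer--Vietoris sequence with the sheaf exact sequence via homomorphisms $\widetilde{\varphi}_{\pi,U_i}$ that preserve non-torsion sections, deriving a contradiction from the existence of an infinite-order global section. Relatedly, you do not address why the cover elements' \emph{closures} must enter: the presheaf is only defined over $\mathsf{Homeo}$-PCVP or FPP spaces, which are typically compact, so the paper must introduce a second presheaf $\mathcal{G}(U)=\mathcal{A}_{\overline{U}}$ and re-prove the torsion dichotomy for it (\Cref{lemBd2}) before any covering argument can begin. These are not cosmetic omissions; they are the content of \Cref{defBd2}, \Cref{lemBd2}, \Cref{lemBd3}, and \Cref{thmBd1}.
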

This paper is organized as follows. In \Cref{secA} we provide some of the basic tools which are used throughout the rest of the paper; particularly, we show that the generalized coincidence value properties are topological invariants. The results of \Cref{secB} and \Cref{secB'} are separated into two distinct approaches, both of which are inspired by the coincidence value properties of the $n$-spheres. The geometric/topological flavoured argumentation of \Cref{secB1} provides connections between coincidence values of a space $X$ and the fibre bundles and adjunction spaces associated to it; in \Cref{secB2} we expand on some questions raised in \Cref{secB1}, primarily by investigating how existence of coincidence values for pairs of fixed-point-free maps is determined by the presence of various group actions. On the other hand, \Cref{secB3,secB4} are decidedly algebraic topological in scope, with \Cref{secB3} introducing a sheaf theoretical framework for determining coincidence values on suitably nice spaces, and \Cref{secB4} is given over to investigating basic cohomological properties of this sheaf, such as the existence of Mayer-Vietoris sequences and a K\"{u}nneth theorem. 

\section{Preliminaries}\label{secA}
Given a continuous self-map $f:X\longrightarrow X$, we shall denote its collection of fixed points by $\mathcal{FP}(f)$; note that since $X$ will always be assumed to be Hausdorff, $\mathcal{FP}(f)$ is closed and will thus be (locally) compact if $X$ is (locally) compact. It is easy to see that if $f,g:X\longrightarrow X$ are commuting maps then $g(\mathcal{FP}(f))\subseteq\mathcal{FP}(f)$ and $f(\mathcal{FP}(g))\subseteq\mathcal{FP}(g)$. Since we shall have occasion to refer to it often, we shall give a proof of the following basic result.

\begin{mypro}\label{proA1}
If $\mathcal{FP}(fg)\neq\emptyset$ for commuting self-maps $f,g:X\longrightarrow X$, then $f$ and $g$ are bijections on $\mathcal{FP}(fg)$.
\end{mypro}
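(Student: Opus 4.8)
The plan is to work directly on the set $K := \mathcal{FP}(fg)$ and to show that the restrictions $f|_K$ and $g|_K$ are mutually inverse self-maps of $K$, which immediately forces each to be a bijection. The first thing I would establish is that $K$ is invariant under both $f$ and $g$, so that these restrictions even make sense. Writing $h = fg = gf$ (the two agree since $f$ and $g$ commute), I would observe that $f$ commutes with $h$, because $fh = f(fg) = f^2 g$ while $hf = (fg)f = f(gf) = f(fg) = f^2 g$; the identical computation gives $gh = hg$. Invoking the elementary fact recorded just above the statement — that commuting maps preserve one another's fixed-point sets — applied to the commuting pair $(f,h)$ and then to $(g,h)$, yields $f(K) = f(\mathcal{FP}(h)) \subseteq \mathcal{FP}(h) = K$ and likewise $g(K) \subseteq K$.

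With invariance in hand the rest is immediate. For any $x \in K$ we have $fg(x) = x$ by definition of $K$, and also $gf(x) = fg(x) = x$ since $f$ and $g$ commute. Thus on $K$ the composites satisfy $(f|_K) \circ (g|_K) = \mathrm{id}_K = (g|_K) \circ (f|_K)$. A self-map admitting a two-sided inverse is a bijection, so $f|_K$ and $g|_K$ are mutually inverse bijections of $K$, which is exactly the assertion.

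There is no serious obstacle here; the only point requiring genuine care is the verification that $K$ is carried into itself by $f$ and $g$, since without this the phrase ``bijections on $\mathcal{FP}(fg)$'' would be vacuous. Everything else reduces to the single observation that $fg$ and $gf$ both act as the identity on $K$. I would present the invariance step purely as a consequence of the commutativity remark already available, rather than reproving it, in order to keep the argument as short as the result warrants.
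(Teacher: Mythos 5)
Your proof is correct and takes essentially the same route as the paper's: both arguments first establish that $\mathcal{FP}(fg)$ is invariant under $f$ and $g$, and then exploit the fact that $fg$ and $gf$ act as the identity on that set. The only difference is presentational --- the paper verifies surjectivity and injectivity separately, whereas you package them as the two-sided-inverse observation, a repackaging the paper itself records in the remark immediately following its proof.
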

\begin{proof}
Suppose that $x\in\mathcal{FP}(fg)$. Then $fg(g(x))=g(fg(x))=g(x)$ and $fg(f(x))=f(gf(x))=f(x)$, so $f(x),g(x)\in\mathcal{FP}(fg)$. To prove surjectivity of $f$ note that $x=f(g(x))$ for any $x\in\mathcal{FP}(fg)$; likewise we have surjectivity of $g$. If $f$ is not injectiive then $\exists y\neq x\in\mathcal{FP}(fg)$ such that $f(x)=f(y)$. But then $x=gf(x)=gf(y)=y$, which is a clear contradiction.
\end{proof}
In the above context it should also be noted that $f$ and $g$ are inverses to one another, a property which is easily checked by realizing that both $fg$ and $g^{-1}f^{-1}$ are the identity on $\mathcal{FP}(fg)$. Indeed, this provides us with a similarly basic implication concerning the closure of $\mathcal{FP}(fg)$ under preimages.

\begin{mypro}\label{proA2}
If $f(y)\in\mathcal{FP}(fg)\neq\emptyset$ for commuting self-maps $f,g:X\longrightarrow X$, then $y\in\mathcal{FP}(fg)$; likewise for $g(y)\in\mathcal{FP}(fg)$. 
\end{mypro}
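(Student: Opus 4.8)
The plan is to reduce the claim to the injectivity statement already packaged in \Cref{proA1}. Write $F=\mathcal{FP}(fg)$, which is nonempty by hypothesis, so that \Cref{proA1} applies: $f$ and $g$ restrict to mutually inverse bijections of $F$, and in particular $f|_F$ is injective while $fg|_F=gf|_F=\mathrm{id}_F$. I would treat the statement for $f$ first; the statement for $g$ is identical after interchanging the two maps, since the hypotheses are symmetric in $f$ and $g$ (note that both $fg,f$ and $fg,g$ are commuting pairs, so the invariance recorded at the start of this section gives $f(F)\subseteq F$ and $g(F)\subseteq F$).

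First I would locate a fixed point of $fg$ adjacent to $y$. Since $f(y)\in F$ and $g$ carries $F$ into itself, applying $g$ and using $gf=fg$ gives $fg(y)=gf(y)=g\bigl(f(y)\bigr)\in F$. Next I would produce the relation that must be cancelled: applying the surjectivity identity from the proof of \Cref{proA1}, namely $p=f\bigl(g(p)\bigr)$ for every $p\in F$, to the point $p=f(y)\in F$ yields $f(y)=f\bigl(g(f(y))\bigr)=f\bigl(fg(y)\bigr)$.

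At this stage I have two points, $y$ and $fg(y)$, satisfying $f\bigl(fg(y)\bigr)=f(y)$, with $fg(y)$ already known to lie in $F$. Since the desired conclusion $y\in F$ is exactly the assertion $fg(y)=y$, the whole proposition comes down to cancelling the outermost $f$ in $f\bigl(fg(y)\bigr)=f(y)$. This cancellation is the step I expect to be the main obstacle: $f$ need not be injective on all of $X$, and the injectivity furnished by \Cref{proA1} is available only on $F$. The plan is therefore to upgrade the local information to a genuine cancellation, namely by arguing that $y$ itself may be placed in $F$ before invoking $f|_F$, so that injectivity of $f$ on $F$ closes the gap; I would expect this to be the delicate point requiring the full strength of the inverse-bijection structure rather than the bare forward invariance. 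Once the $f$-cancellation is justified, $fg(y)=y$ gives $y\in F$, and the symmetric computation with the roles of $f$ and $g$ exchanged settles the case $g(y)\in F$.
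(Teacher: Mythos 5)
Your reduction is correct as far as it goes: from $f(y)\in\mathcal{FP}(fg)$ you correctly derive $f\bigl(fg(y)\bigr)=f(y)$ with $fg(y)\in\mathcal{FP}(fg)$, and you correctly isolate the crux as cancelling the outer $f$. But your proposed way of closing that gap --- ``arguing that $y$ itself may be placed in $\mathcal{FP}(fg)$ before invoking $f|_{\mathcal{FP}(fg)}$'' --- is circular: $y\in\mathcal{FP}(fg)$ is precisely the conclusion to be proved, and the injectivity supplied by \Cref{proA1} lives only on $\mathcal{FP}(fg)$, so it cannot be applied at $y$ until after the conclusion is already known. As written, the proposal is a correct reduction followed by an unjustified cancellation, not a proof.

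The deeper issue is that the obstacle you flagged is not removable: the proposition as stated is false. Take $X=[0,1]$, $g=Id_X$, and $f\equiv 0$ the constant map; these commute, $\mathcal{FP}(fg)=\mathcal{FP}(f)=\{0\}\neq\emptyset$, and $f(1)=0\in\mathcal{FP}(fg)$ while $1\notin\mathcal{FP}(fg)$. (Note that \Cref{proA1} is not violated here: $f$ and $g$ are indeed mutually inverse bijections on the singleton $\{0\}$.) For comparison, the paper's own proof commits exactly the step you were worried about: it writes $fg(y)=gf(y)=f^{-1}_{|\mathcal{FP}(fg)}f(y)=y$, treating $g\circ f$ as the identity at $y$ even though the relation $g\circ f=Id$ is only guaranteed on $\mathcal{FP}(fg)$, where $y$ is not yet known to lie. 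So your instinct about where the difficulty sits was exactly right; the honest conclusion is that the statement needs an additional hypothesis (for instance that $y$ already lies in a set on which $f$ is injective, or that $y$ is itself in the image of the invariant set), rather than a cleverer cancellation.
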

\begin{proof}
Suppose $f(y)\in\mathcal{FP}(fg)$ for some $y\in X$. Then $fgf(y)=f(y)$. However, from \Cref{proA1} we know that $f$ is a bijection on $\mathcal{FP}(fg)$ where $f^{-1}_{|\mathcal{FP}(fg)}\equiv g$, from which it follows that $fg(y)=gf(y)=f^{-1}_{|\mathcal{FP}(fg)}f(y)=y$, and so $y$ is a fixed point of $fg$.
\end{proof}

For each pair $f,g:X\longrightarrow X$ of self-maps, the set of coincidence values $\mathcal{CV}(f,g)$ satisfies weaker versions of the above propositions. We will simply state the necessary results, seeing as the proofs are essentially identical; again we note that if $X$ is Hausdorff then $\mathcal{CV}(f,g)$ is closed, being compact if $X$ is itself compact.

\begin{mypro}\label{proA3}
For any pair of not necessarily commuting self-maps $f_1,f_2:X\longrightarrow X$, if $g$ is a self-map which commutes with both $f_1$ and $f_2$, then $g(\mathcal{CV}(f_1,f_2))\subseteq\mathcal{CV}(f_1,f_2)$. In particular, if $\mathcal{CV}(f,g)\neq\emptyset$ for commuting self-maps $f,g:X\longrightarrow X$ where $f$ is a bijection, then $g$ is a bijection on $\mathcal{CV}(f,g)$, and if  either $f(y)$ or $g(y)$ belong to $\mathcal{CV}(f,g)$ then $y\in\mathcal{CV}(f,g)$.
\end{mypro}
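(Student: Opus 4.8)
The plan is to deduce all three assertions from the single commutation identity behind the containment $g(\mathcal{CV}(f_1,f_2))\subseteq\mathcal{CV}(f_1,f_2)$, together with the elementary observation that on the set $C:=\mathcal{CV}(f,g)$ the maps $f$ and $g$ literally agree, so that the global bijectivity of $f$ can be transported to $C$. First I would dispose of the general closure statement: given $x$ with $f_1(x)=f_2(x)$ and a map $g$ commuting with both $f_1$ and $f_2$, one simply computes $f_1(g(x))=g(f_1(x))=g(f_2(x))=f_2(g(x))$, so $g(x)\in\mathcal{CV}(f_1,f_2)$. This is the only place commutativity enters, and it is immediate.

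Specializing to the commuting pair $f,g$ (each member of which commutes with both $f$ and $g$) gives $f(C)\subseteq C$ and $g(C)\subseteq C$. When $f$ is a bijection, the relation $fg=gf$ forces $f^{-1}g=gf^{-1}$, so $f^{-1}$ commutes with both $f$ and $g$; applying the closure statement to $f^{-1}$ yields $f^{-1}(C)\subseteq C$, that is $C\subseteq f(C)$, whence $f(C)=C$. This sets up the bijectivity assertion. By definition $f$ and $g$ restrict to the \emph{same} self-map of $C$, since $f(x)=g(x)$ for every $x\in C$; this common restriction is injective as a restriction of the injective $f$, and surjective onto $C$ by the equality $f(C)=C$ just obtained. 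Hence $g|_C=f|_C$ is a bijection of $C$ with inverse $f^{-1}|_C$, which is the analogue here of \Cref{proA1}.

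For the preimage clause the case $f(y)\in C$ is quick: from $f(y)\in C$ we obtain $f(f(y))=g(f(y))=f(g(y))$ by commutativity, and injectivity of $f$ collapses this to $f(y)=g(y)$, i.e. $y\in C$. I expect the case $g(y)\in C$ to be the main obstacle, precisely because $g$ need not be injective on all of $X$, so the analogous cancellation is unavailable and one cannot argue symmetrically. The natural route, mirroring the inverse argument of \Cref{proA2}, is to rewrite $g(y)\in C$ as $f(g(y))=g(g(y))$ and apply $f^{-1}$; using $f^{-1}g=gf^{-1}$ this produces $g(y)=g\bigl(f^{-1}(g(y))\bigr)$ with $f^{-1}(g(y))\in C$. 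The delicate point — the step I expect to demand the full force of the hypotheses — is then to upgrade this to an equality of the \emph{arguments}, for which the bijection $g|_C$ supplies injectivity only once both points are known to lie in $C$. Securing that membership in $C$ for $y$ itself, without presupposing the conclusion, is the crux, and it is here that any additional structure on $g$ (beyond its being a bijection merely on $C$) would have to be brought to bear.
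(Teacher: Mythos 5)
Your closure computation, your bijectivity argument, and your treatment of the $f(y)\in\mathcal{CV}(f,g)$ case are all correct and complete. The paper gives no written proof of this proposition at all, deferring to \Cref{proA1} and \Cref{proA2} as ``essentially identical''; note that the transplant is not quite routine, since surjectivity in \Cref{proA1} comes from the identity $x=f(g(x))$ on $\mathcal{FP}(fg)$, which has no analogue on $\mathcal{CV}(f,g)$. Your detour through $f^{-1}$ (which commutes with both maps, so $f^{-1}(\mathcal{CV}(f,g))\subseteq\mathcal{CV}(f,g)$ and hence $f(\mathcal{CV}(f,g))=\mathcal{CV}(f,g)$) is the right repair, and together with the observation that $f$ and $g$ restrict to the same injective map on $\mathcal{CV}(f,g)$ it settles the bijectivity clause cleanly.

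The gap you flag in the $g(y)\in\mathcal{CV}(f,g)$ case is genuine, and no additional manipulation will close it: that clause is false under the stated hypotheses. Take $X=[0,1]$, $f=Id_X$ (a bijection) and $g\equiv 0$ the constant map; these commute, $\mathcal{CV}(f,g)=\{0\}\neq\emptyset$, and $g$ restricts to a bijection of $\{0\}$, yet $g(1)=0\in\mathcal{CV}(f,g)$ while $1\notin\mathcal{CV}(f,g)$. The obstruction is exactly the one you isolate: injectivity of $g$ is available only on $\mathcal{CV}(f,g)$, and $y$ is not yet known to lie there, so the cancellation is circular. The same circularity already appears in the paper's proof of \Cref{proA2}, where the step $f^{-1}_{|\mathcal{FP}(fg)}f(y)=y$ silently presupposes $y\in\mathcal{FP}(fg)$ (and that proposition likewise fails for $f\equiv 0$, $g=Id_X$ on $[0,1]$). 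So your proposal proves everything in the statement that is actually provable; the final clause should either be restricted to the $f(y)$ case or be supplemented by the hypothesis that $g$ is injective on all of $X$.
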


The following important lemma can be found in basically equivalent form in \cite[Proposition 4.1]{GJ88}, though we refer the reader to Dowell's survey \cite[Lemma 4]{EM09} for a particularly nice elementary proof. 

\begin{mylem}\label{lemA1}
Let $X$ be a compact and second-countable Hausdorff space. For commuting self-maps $f,g:X\longrightarrow X$ there exists a nonempty compact subspace $K\subseteq X$ such that $f(K)=g(K)=K$. Moreover, if $X$ is connected then $K$ can be taken to be connected as well.
\end{mylem}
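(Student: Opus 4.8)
The plan is to construct $K$ explicitly as the intersection of all forward images of $X$ under the abelian semigroup generated by $f$ and $g$. Concretely, I would set $K=\bigcap_{i,j\ge 0} f^i g^j(X)$. Because $f,g$ commute and $f(X),g(X)\subseteq X$, increasing either exponent shrinks the corresponding image: $f^{i+1}g^j(X)\subseteq f^i g^j(X)$ and $f^i g^{j+1}(X)\subseteq f^i g^j(X)$. Hence $\{f^i g^j(X)\}_{i,j\ge0}$ is downward directed, and each member is a nonempty compact set, being a continuous image of the compact space $X$ and hence closed in the Hausdorff space $X$. Any finite subfamily contains the single set $f^I g^J(X)$, where $I,J$ are the coordinatewise maxima of the indices, and this set is nonempty; so the family has the finite intersection property and $K$ is nonempty and compact.

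Next I would verify $f(K)=K$ and, symmetrically, $g(K)=K$. The inclusion $f(K)\subseteq K$ is immediate: if $x$ lies in every $f^i g^j(X)$, then $f(x)\in f^{i+1}g^j(X)\subseteq f^i g^j(X)$ for all $i,j$. For the reverse inclusion I would run a second compactness argument. Fixing $y\in K$, note that $y\in f^{i+1}g^j(X)=f\bigl(f^i g^j(X)\bigr)$ for every $i,j$, so the sets $f^{-1}(y)\cap f^i g^j(X)$ are nonempty and form a downward-directed family of compacta, each being the intersection of the closed set $f^{-1}(y)$ with a compact set. By the finite intersection property their total intersection $f^{-1}(y)\cap K$ is nonempty, so $y$ has a preimage in $K$ and $y\in f(K)$. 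This gives $f(K)=K$, and the identical argument with the roles of $f$ and $g$ exchanged gives $g(K)=K$.

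Finally, for the connected case I would exploit that each $f^i g^j(X)$ is not merely compact but connected, being a continuous image of the connected space $X$. The key fact I would invoke is that a downward-directed intersection of compact connected subsets of a Hausdorff space is connected: if $K=A\sqcup B$ with $A,B$ disjoint, nonempty, and closed, I would separate them by disjoint open sets $U\supseteq A$ and $V\supseteq B$ (using that disjoint compacta in a Hausdorff space can be so separated), observe that $\bigcap_{i,j}\bigl(f^i g^j(X)\setminus(U\cup V)\bigr)=K\setminus(U\cup V)=\emptyset$, and conclude by directedness and compactness that some single $f^i g^j(X)$ is already contained in $U\cup V$. Since $A$ and $B$ both meet this set, that contradicts its connectedness, so $K$ must be connected.

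I expect the main obstacle to be organizing the two compactness arguments so that $f(K)=K$ and $g(K)=K$ hold \emph{simultaneously} rather than merely $f(K),g(K)\subseteq K$; the surjectivity half is where commutativity and the directedness of the image family are genuinely used, and it is easy to conflate ``$K$ is invariant'' with ``$f$ and $g$ act surjectively on $K$.'' I would remark that a Zorn's-lemma argument producing a minimal nonempty closed $f,g$-invariant set also yields the first assertion, but it does not obviously deliver connectedness, which is exactly why the explicit intersection is preferable here. The connectedness step itself is standard once the directed family is in place, and I would note in passing that second countability is not actually needed for this argument, only compact Hausdorffness.
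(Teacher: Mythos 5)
Your proof is correct and complete: the directed family $\{f^ig^j(X)\}_{i,j\ge 0}$ is exactly the right object, the finite-intersection-property arguments for nonemptiness, for the surjectivity of $f$ and $g$ on $K$ (the genuinely nontrivial half), and for connectedness are all sound, and commutativity is used exactly where it must be, namely to make $g(f^ig^j(X))\subseteq f^ig^j(X)$ hold alongside the corresponding inclusion for $f$. The paper does not actually supply a proof of \Cref{lemA1} but defers to Jungck and to Dowell's survey, and the argument given there is essentially this same intersection-of-iterated-images construction; your closing observation that second countability is superfluous and only compact Hausdorffness is used is also accurate.
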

\begin{mycor}\label{corA1}
Under the conditions of \Cref{lemA1} the subspace $K$ is contained in the compact (and connected) subspace $f(X)\cap g(X)$ and thus contains $\mathcal{CV}(f,g)$.
\end{mycor}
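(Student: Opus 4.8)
The plan is to establish, in turn, the inclusion $K\subseteq f(X)\cap g(X)$, the compactness of $f(X)\cap g(X)$, the containment of the coincidence values in this common image, and finally its connectedness, which I expect to be the only nontrivial point.

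First I would dispose of the two inclusions, both of which are immediate. Since \Cref{lemA1} supplies $K$ with $f(K)=g(K)=K$, we have $K=f(K)\subseteq f(X)$ and $K=g(K)\subseteq g(X)$, whence $K\subseteq f(X)\cap g(X)$. For the coincidence values, if $v\in\mathcal{CV}(f,g)$ then $v=f(x)=g(x)$ for some $x$, so $v$ lies in $f(X)$ and in $g(X)$ simultaneously; thus $\mathcal{CV}(f,g)\subseteq f(X)\cap g(X)$, which is the asserted containment of the coincidence values in the common image. It should be stressed that the coincidence values need not lie in the minimal core $K$ itself --- already for $X=[0,1]$ and $f=g=\tfrac{1}{2}\,\mathrm{id}$ one has $K=\{0\}$ while every point of $[0,\tfrac12]$ is a coincidence value --- so the correct receptacle is $f(X)\cap g(X)$ rather than $K$, and the clause ``thus contains $\mathcal{CV}(f,g)$'' must be read as a statement about $f(X)\cap g(X)$.

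Next I would record compactness. As $X$ is compact and $f,g$ are continuous into the Hausdorff space $X$, each of $f(X),g(X)$ is compact and therefore closed; consequently $f(X)\cap g(X)$ is a closed subset of the compact set $f(X)$ and is itself compact.

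The main obstacle is the connectedness of $f(X)\cap g(X)$ under the additional hypothesis that $X$ is connected; here I would resist pushing the assertion onto $K$ and argue about the intersection directly. Both $f(X)$ and $g(X)$ are connected, being continuous images of $X$, and they share the connected subcontinuum $fg(X)$: indeed $fg(X)\subseteq f(X)\cap g(X)$, and $fg(K)=f(g(K))=f(K)=K$ forces $K\subseteq fg(X)$, so by \Cref{lemA1} the common image already contains a nonempty connected set. The strategy is then to show that $f(X)\cap g(X)$ has no connected component disjoint from $fg(X)$; since $fg(X)$ is connected it lies in a single component, so excluding all others forces $f(X)\cap g(X)$ to be connected. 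The difficulty is to rule out a spurious component $Q$ consisting of points $q=f(a)=g(b)$ with $a\notin g(X)$ and $b\notin f(X)$, which would sit inside the intersection yet be missed by $fg(X)$. I expect this to be where the commutativity of $f$ and $g$ must genuinely be exploited, either by absorbing such components through the commutative semigroup generated by $f$ and $g$, or --- as suffices for the interval and the simple triod motivating \Cref{que1} --- by invoking hereditary unicoherence of $X$, under which intersections of subcontinua are automatically connected.
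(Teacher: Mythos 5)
Your treatment of everything the paper actually proves coincides with the paper's own proof: $K=f(K)\subseteq f(X)$ and $K=g(K)\subseteq g(X)$ give $K\subseteq f(X)\cap g(X)$; compactness follows because $f(X)$ and $g(X)$ are compact, hence closed in the Hausdorff space $X$; and $\mathcal{CV}(f,g)\subseteq f(X)\cap g(X)$ is definitional once, as you correctly insist, the clause ``thus contains'' is read as a statement about $f(X)\cap g(X)$ rather than about $K$. Your example $f=g=\tfrac{1}{2}\,\mathrm{id}$ on $[0,1]$, where necessarily $K=\{0\}$, confirms that the $K$-reading would be false, and the paper's proof (``it follows by definition that $\mathcal{CV}(f,g)$ is a subset'') shows that the intersection is indeed the intended receptacle.

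The one point you leave open --- connectedness of $f(X)\cap g(X)$ when $X$ is connected --- is a genuine gap in your proposal: a list of strategies (absorbing spurious components via the semigroup generated by $f$ and $g$, or invoking hereditary unicoherence) is not a proof, and hereditary unicoherence is in any case unavailable for the spaces this paper cares about most, since the disk and $\mathbf{S}^2$ are not hereditarily unicoherent (two arcs may cross twice). But you should know that the paper fares no better at exactly this point: its entire justification is that ``continuity of $f$ and $g$ guarantee compactness (connectedness) of $f(X)\cap g(X)$,'' which is a valid inference for compactness but not for connectedness --- continuity gives connectedness of $f(X)$ and of $g(X)$ separately, and an intersection of two subcontinua of a connected space need not be connected. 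So your diagnosis of this as the main obstacle, together with your observation that commutativity would have to be genuinely exploited (for instance through the connected set $fg(X)=gf(X)\subseteq f(X)\cap g(X)$ that contains $K$), is strictly more careful than the paper's own argument; what neither you nor the paper supplies is a proof, or a counterexample, settling whether the connectedness assertion holds for arbitrary commuting pairs on a continuum.
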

\begin{proof}
By the above lemma there exists non-empty compact (connected) $K\subseteq X$ such that $f(K)\cap g(K)=K$; thus, $K\subseteq f(X)\cap g(X)\neq\emptyset$. Continuity of $f$ and $g$ guarantee compactness (connectedness) of $f(X)\cap g(X)$ and it follows by definition that $\mathcal{CV}(f,g)$ is a subset.
\end{proof}

The following definitions introduce weakened notions of the coincidence value property such that there is no longer any dependence upon every self-map possessing a fixed point. However, we still have that the collection of CVP spaces is contained within these weaker collections. 
\begin{mydef}\label{defA1}
A topological space $X$ has the weak coincidence value property (WCVP) if  there exists $x\in\mathcal{CV}(f,g)$ for every pair of commuting self-maps $f,g:X\longrightarrow X$, given that both are fixed-point-free or both have fixed points.
\end{mydef}
\begin{mydef}\label{defA2}
A topological space $X$ has the periodic coincidence value property (PCVP) if for every pair of commuting self-maps $f,g:X\longrightarrow X$ there exists a point $x$ such that $x\in\mathcal{CV}(f^n,g^m)$ for $m,n\in\mathbb{N}\setminus\{0\}$. A stronger condition is that of $X$ being a uniformly $k$-PCVP space; that is, there exists a single $k\in\mathbb{N}$ such that $k\geq\max\{m,n\}$ for every pair $(m,n)$.
\end{mydef}
\begin{mythm}\label{thmA1}
The WCVP and (uniform) PCVP are topological invariants
\end{mythm}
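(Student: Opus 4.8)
The plan is to establish each property is preserved under homeomorphism by the standard device of conjugation. Let $h:X\longrightarrow Y$ be a homeomorphism and suppose $X$ enjoys the relevant property; given commuting self-maps $f,g:Y\longrightarrow Y$ I would form the conjugates $\tilde f=h^{-1}fh$ and $\tilde g=h^{-1}gh$, which are continuous self-maps of $X$. The first routine step is to verify that conjugation is a homomorphism of the monoid of self-maps: one has $\tilde f\tilde g=h^{-1}(fg)h$, so the conjugate pair commutes whenever $(f,g)$ does. I would also record the identity $\tilde f^{\,n}=h^{-1}f^{n}h$ for all $n$, which follows by telescoping the internal cancellations $hh^{-1}$.

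The second step is to transport the coincidence data back and forth. Since $h$ is injective, $\tilde f^{\,n}(x)=\tilde g^{\,m}(x)$ holds if and only if $f^{n}(h(x))=g^{m}(h(x))$; equivalently $h$ carries $\mathcal{CV}(\tilde f^{\,n},\tilde g^{\,m})$ bijectively onto $\mathcal{CV}(f^{n},g^{m})$, and in particular (taking $m=n=1$) $\mathcal{CV}(\tilde f,\tilde g)$ onto $\mathcal{CV}(f,g)$. For the PCVP I would apply the hypothesis on $X$ to $(\tilde f,\tilde g)$ to obtain $x$ and exponents $m,n\geq1$ with $\tilde f^{\,n}(x)=\tilde g^{\,m}(x)$, whence $h(x)\in\mathcal{CV}(f^{n},g^{m})$. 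The uniform case is immediate, since the same exponents serve and any bound $k\geq\max\{m,n\}$ valid for $X$ bounds the pair on $Y$.

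The WCVP requires the single extra observation that the dichotomy hypothesis is conjugation-invariant. Because $\mathcal{FP}(\tilde f)=h^{-1}(\mathcal{FP}(f))$, and likewise for $g$, the map $\tilde f$ is fixed-point-free exactly when $f$ is, and has a fixed point exactly when $f$ does; thus $(f,g)$ falls under the WCVP hypothesis for $Y$ if and only if $(\tilde f,\tilde g)$ falls under it for $X$. Applying the WCVP of $X$ then yields a coincidence value for $(\tilde f,\tilde g)$, which $h$ pushes forward to one for $(f,g)$.

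I do not anticipate a genuine obstacle: the argument is entirely formal, resting on the fact that $f\mapsto h^{-1}fh$ is a monoid isomorphism that intertwines powers and preserves both the commuting relation and the fixed-point structure. The only point demanding care is the bookkeeping for the WCVP, where one must confirm that the restriction to pairs which are \emph{simultaneously} fixed-point-free or \emph{simultaneously} possessed of fixed points (rather than arbitrary pairs) is respected by conjugation; the identity $\mathcal{FP}(\tilde f)=h^{-1}(\mathcal{FP}(f))$ settles this at once.
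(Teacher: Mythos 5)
Your proposal is correct and follows essentially the same conjugation argument as the paper's own proof, transporting commuting pairs via $f\mapsto h^{-1}fh$ and using $\tilde f^{\,n}=h^{-1}f^nh$ to move coincidence values back through the homeomorphism. If anything, you are slightly more thorough than the paper in explicitly checking that the fixed-point dichotomy in the WCVP hypothesis is preserved via $\mathcal{FP}(\tilde f)=h^{-1}(\mathcal{FP}(f))$, a point the paper's proof leaves implicit.
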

\begin{proof}
Let $X$ be a WCVP space and $Y$ a space homeomorphic to $X$, where $\phi:X\longrightarrow Y$ is a homeomorphism. That $Y$ is also a WCVP space follows from the fact there exists a unique commuting pair of self-maps
\[\phi^{-1}f\phi:X\longrightarrow X\qquad\mathrm{and}\qquad\phi^{-1}g\phi:X\longrightarrow X\]
for every commuting pair of self-maps $f,g:Y\longrightarrow Y$; hence $\phi(x)\in\mathcal{CV}(f,g)$. Similarly, if $X$ is a PCVP space with $f,g:Y\longrightarrow Y$ a commuting pair of self-maps, then it is straightforward to verify that $x\in\mathcal{CV}((\phi^{-1}f\phi)^n,(\phi^{-1}g\phi)^m)=\mathcal{CV}(\phi^{-1}f^n\phi,\phi^{-1}g^m\phi)$ for some $m,n\in\mathbb{N}\setminus\{0\}$. This implies that $\phi(x)\in\mathcal{CV}(f^n,g^m)$.
\end{proof}
Below we provide examples of compact Hausdorff WCVP and PCVP spaces which do not satisfy the CVP, hence these definitions are not vacuous. We also note that they are not entirely weak either, preserving the notion that certain maps are too special to possess coincidence values. For example, the circle $\mathbf{S}^1$ has neither the WCVP nor the PCVP since every pair of irrational rotations $\theta_1,\theta_2$ commute and are fixed-point-free but have no coincidence value; moreover any $n$-fold composition of an irrational rotation is again an irrational rotation, so $\mathcal{CV}(f^n,g^m)=\emptyset$.
\begin{myex}\label{exaA1}
Consider $X=[0,1]\sqcup[2,3]$ with the topology inherited as a subspace of $\mathbb{R}$. Since connectedness is preserved under continuity, any self map $f:X\longrightarrow X$ has one of the following behaviours
\[\mathrm{(i)}\;f:[0,1]\longrightarrow [0,1]\quad\mathrm{and}\quad f:[2,3]\longrightarrow [2,3]\qquad \mathrm{(ii)}\;f:X\longrightarrow[0,1]\]	 
\[\mathrm{(iii)}\;f:[0,1]\longrightarrow [2,3]\quad\mathrm{and}\quad f:[2,3]\longrightarrow [0,1]\qquad \mathrm{(iv)}\;f:X\longrightarrow[2,3]\]
Suppose that $f,g:X\longrightarrow X$ commute and are of some type (i),(ii) or (iv); by considering restriction to one interval we have that $f,g:[0,1]\longrightarrow[0,1]$ or $f,g:[2,3]\longrightarrow[2,3]$. Since every closed and bounded interval has the CVP it follows that $f$ and $g$ possess a coincidence value. It is obvious that $X$ itself is not a CVP space since every type (iii) self-map is fixed-point-free. Now suppose that $f$ and $g$ commute but $f$ is of type (iii), then $g$ must be either type (i) or (iii). If $g$ is of type (i) it follows that $f^2,g:[0,1]\longrightarrow[0,1]$ are commuting self-maps and thus $\mathcal{CV}(f^2,g)$ is nonempty. In the other case we similarly see that $f^2,g^2:[0,1]\longrightarrow[0,1]$ are commuting self-maps and thus $\mathcal{CV}(f^2,g^2)$ is nonempty.
\end{myex} 
\begin{myex}\label{exaA2}
Consider the disjoint union $X=[0,1]\sqcup\{2,3\}$ where the unit interval has the usual topology and $\{2,3\}$ has the trivial topology. The set $\{2,3\}$ must be open under the preimage of any continuous self-map, hence $f^{-1}(\{2,3\})$ must either be $X,\{2,3\}$, or contained entirely within $[0,1]$. Moreover, since connectedness is preserved under continuity any self-map $f:X\longrightarrow X$ has one of the following behaviours
\[\mathrm{(i)}\;f:[0,1]\longrightarrow [0,1]\quad\mathrm{and}\quad f:\{2,3\}\longrightarrow \{2,3\}\qquad \mathrm{(ii)}\;f:X\longrightarrow[0,1]\]	 
\[\mathrm{(iii)}\;f:[0,1]\longrightarrow \{2,3\}\quad\mathrm{and}\quad f:\{2,3\}\longrightarrow [0,1]\qquad \mathrm{(iv)}\;f:X\longrightarrow\{2,3\}\]
Type (iv) maps only commute with each other and possess a fixed point if and only if they are constant or the identity on $\{2,3\}$; clearly any such commuting $f$ and $g$ agree on $\{2,3\}$. Now suppose that $f,g:X\longrightarrow X$ commute, have a fixed point, and they are of type (i) or (ii). Then by considering restriction to the interval we have that $f,g:[0,1]\longrightarrow[0,1]$ so as in the previous example $f$ and $g$ possess a coincidence value. It is obvious that a self-map is fixed-point-free if and only if it is of type (iii) or a type (iv) which permutes $\{2,3\}$; moreover, a pair of such maps commute only if they are of the same type. However, it is easy to check that fixed-point-free type (iv) maps commute only if they agree on $\{2,3\}$-- and indeed on the whole of $X$. Thus suppose that $f$ and $g$ are commuting type (iii) self-maps and 
note that $f^2,fg:[0,1]\longrightarrow[0,1]$ are commuting self-maps and thus possess a coincidence point. Without loss of generality let $0\in\mathcal{CV}(f^2,fg)$ and set $f(0)=2$, then: $g(2)=gf(0)=fg(0)=f^2(0)=f(2)$.
To emphasize that there truly do exist distinct commuting self-maps consider the type (iii) pair given by
\[g(2)=g(3)=f(2)=1,f(3)=1/2\quad\mathrm{and}\quad f([0,1])=g([0,1])=2\]
\end{myex} 

When they exist, a great deal of information about $X$ can be determined by examining the extra structure introduced by group actions. In all that follows we will assume $\Gamma$ to be a non-trivial group with identity denoted $\mathbbm{1}$ and $\gamma$ an arbitrary element. 
\begin{mydef}\label{defA3}
A topological space is a (left) $\Gamma$-space if it admits a continuous (left) $\Gamma$-action. That is, the bilinear map $\Gamma\times X\longrightarrow X$ given by $(\gamma,x)\rightarrow\gamma\cdot x$ is continuous with respect to the product topology of $\Gamma\times X$ and satisfies
\[\mathbbm{1}\cdot x=x\quad\forall x\in X\qquad(\gamma_1\gamma_2)\cdot x=\gamma_1\cdot(\gamma_2\cdot x)\quad\forall\gamma_1,\gamma_2\in\Gamma\;,\forall x\in X\]
A continuous map $f:X\longrightarrow Y$ between two (left) $\Gamma$-spaces is called $\Gamma$-equivariant if $f(\gamma\cdot x)=\gamma\cdot f(x)$ for all $\gamma\in\Gamma$ and $x\in X$.
\end{mydef}
A $\Gamma$-action on $X$ is said to be free if $\gamma\cdot x=x$ implies that $\gamma=\mathbbm{1}$; a group action is transitive if for any pair $x,y\in X$ there exists $\gamma\in\Gamma$ such that $\gamma\cdot x=y$. An action is regular if it is both free and transitive, the left multiplication of a topological group on itself being such an example. There are weaker notions of these pointwise actions; for example, $X$ admits a \textit{topologically transitive} action if for every pair of open sets $U,V\subseteq X$ there exists $\gamma$ such that $(\gamma\cdot U)\cap V\neq\emptyset$.

\begin{mydef}\label{defA4}
A locally compact space $X$ admits a proper $\Gamma$-action if the map from $\Gamma\times X$ to $X\times X$ defined by $(\gamma,x)\mapsto(\gamma\cdot x,x)$ is a proper map. Viewing $\Gamma$ as a topological group, this is equivalent to (see \cite[Chapter 21]{JML13}) the following characterization:
\[\forall\;\mathrm{compact}\;K\subsetneq X:\quad\Gamma_K=\{\gamma\in\Gamma:\;(\gamma\cdot K)\cap K\neq\emptyset\}\;\mathrm{is\;compact}\]
\end{mydef}

\begin{mydef}\label{defA5}
For a topological space $F$ a fibre bundle with model fibre $F$ over a (locally) compact Hausdorff space $X$ is a triple $(E,X,\pi)$ where $\pi:E\longrightarrow X$ is a surjective continuous map. Every arbitrary fibre bundle $(E,X,\pi)$ shall be taken to be locally trivial; that is there exists an open cover $\{U_\alpha\}$ of $X$ and a family of homeomorphisms $\{\phi_\alpha\}$ such that
\[\phi_\alpha:\pi^{-1}(U_\alpha)\longrightarrow U_\alpha\times F\qquad\mathrm{and}\qquad\phi_\alpha:\pi^{-1}(\{x\})\longrightarrow \{x\}\times F\quad\forall x\in U_\alpha\]
The collection $\{U_\alpha,\phi_\alpha\}$ is called a set of local trivializations of the bundle.
\end{mydef}

\begin{mydef}\label{defA6}
If $F$ is a left $\Gamma$-space for some topological group $\Gamma$, then a fibre bundle with model fibre $F$ is said to have \textbf{\textit{structure group}} $\Gamma$ if the patching together of overlapping local homeomorphisms is governed by $\Gamma$. Explicitly, there exists a collection of continuous transition functions satisfying
\[\rho_{\alpha_i\alpha_j}:U_{\alpha_i}\cap U_{\alpha_j}\longrightarrow\Gamma\leq\mathsf{Homeo}(F)\qquad\rho_{\alpha_i\alpha_j}=\mathbbm{1}\;if\;i=j\qquad\rho_{\alpha_i\alpha_k}=\rho_{\alpha_i\alpha_j}\circ\rho_{\alpha_j\alpha_k}\]
\[\phi_{\alpha_i}\phi_{\alpha_j}^{-1}:(U_{\alpha_i}\cap U_{\alpha_j})\times F\longrightarrow (U_{\alpha_i}\cap U_{\alpha_j})\times F\qquad(x,y)\longmapsto(x,\rho_{\alpha_i\alpha_j}(x)\cdot y)\]
In the special case that the $\Gamma$-action on $F$ is regular, the bundle $(E,X,\pi)$ is called a principal $\Gamma$-bundle. The fibre bundle $(\mathcal{E},X,p)$ where $\mathcal{E}=X\times F$ and $p:\mathcal{E}\longrightarrow X$ is the obvious projection map will be called the trivial fibre bundle. 
\end{mydef}

\begin{mydef}\label{defA7}
For any continuous map $f:Y\longrightarrow X$ and arbitrary fibre bundle $(E,X,\pi)$ the \textbf{\textit{pullback bundle}} $(f^*E,Y,\pi_f)$ is defined by
\[f^*E=\{(y,e)\in Y\times E:\;f(y)=\pi(e)\}\quad\mathrm{and}\quad\pi_f(y,e)=y\]
\end{mydef}

We say that there exists a morphism between fibre bundles $(E_1,X,\pi)$ and $(E_2,Y,\sigma)$ if there exist a pair of maps $(\varphi,h)$ such that the following diagram commutes
\[\begin{tikzcd} 
E_2 \arrow{r}{\varphi} \arrow{d}{\sigma} & E_1 \arrow{d}{\pi}\\
Y \arrow{r}{h} & X
\end{tikzcd}\]
and $\varphi$ is a continuous map on each fibre. In this light it is often useful to view the pullback bundle $f^*E$ as the universal-- in the sense of category theory-- pullback over $X$ where the morphism between $f^*E$ and $E$ is given by $h=f$ and $\varphi$ is the projection onto the second factor. 

\begin{mydef}\label{defA8}
Two fibre bundles $(E_1,X,\sigma_1)$ and $(E_2,X,\sigma_2)$ are isomorphic if the bundle map $\varphi$ is a continuous bijection on fibres. 
\end{mydef}
When the context is obvious will write $E$ to denote the fibre bundle triple and $E_1\cong E_2$ to reflect isomorphic bundles.

\begin{mydef}\label{defA9}
Let $X$ be a topological space and $\mathbf{C}$ a category (such as the category $\mathbf{Mod}_{\mathbb{Z}}$ of $\mathbb{Z}$-modules) with objects $\mathsf{Obj}(\mathbf{C})$ and morphisms $\mathsf{Mor}(\mathbf{C})$. Let $W\subseteq V\subseteq U\subseteq X$ be arbitrary open sets with inclusion maps $j:W\hookrightarrow V$ and $i:V\hookrightarrow U$. A presheaf $\mathcal{F}$ on (the category of open sets) $X$ which takes values in $\mathbf{C}$ is a contravariant functor satisfying
\[\mathcal{F}(i\circ j)=\mathcal{F}(j)\circ\mathcal{F}(i)\]
\[\mathcal{F}(Id_U)=\mathbbm{1}_{\mathcal{F}(U)}\]
where $\mathcal{F}(U)\in\mathsf{Obj}(\mathbf{C})$ and the map $\mathcal{F}(i):\mathcal{F}(U)\longrightarrow\mathcal{F}(V)$ belongs to $\mathsf{Mor}(\mathbf{C})$. To make immediate the relationship between the open sets we will usually denote the morphism $\mathcal{F}(i)$ by $\rho^U_V$.
\end{mydef}

Given presheaves $\mathcal{F}$ and $\mathcal{G}$ on $X$, $\varphi:\mathcal{F}\longrightarrow\mathcal{G}$ is called a morphism of presheaves if there exists a family of maps $\varphi_U\in\mathsf{Mor}(\mathbf{C})$ such that the following diagram commutes for every inclusion of open sets $V\subseteq U\subseteq X$.
\[\begin{tikzcd} 
\mathcal{F}(U) \arrow{r}{\varphi_U} \arrow[d,swap,"(\rho_\mathcal{F})^U_V"] & \mathcal{G}(U) \arrow{d}{(\rho_\mathcal{G})^U_V}\\
\mathcal{F}(V) \arrow{r}{\varphi_V} & \mathcal{G}(V)
\end{tikzcd}\]

\begin{mydef}\label{defA10}
A monopresheaf on $X$ which takes values in $\mathbf{C}$ is a presheaf which satisfies the additional condition that for any open set $U\subseteq X$ with arbitrary open cover $\{U_\alpha\}_{\alpha\in\Lambda}$, if $\rho^U_{U_\alpha}(a)=\rho^U_{U_\alpha}(b)$ for some $a,b\in\mathcal{F}(U)$ and all $\alpha\in\Lambda$, then $a=b$. A sheaf on $X$ which takes values in $\mathbf{C}$ is a monopresheaf which satisfies the gluing condition 
\[\rho^{U_\alpha}_{U_\alpha\cap U_\beta}(a_\alpha)=\rho^{U_\beta}_{U_\alpha\cap U_\beta}(a_\beta)\;\mathrm{for\;all}\;\alpha,\beta\in\Lambda\implies\exists a\in\mathcal{F}(U)\;\mathrm{such\;that}\;\rho^U_{U_\alpha}(a)=a_\alpha\]
with respect to any arbitrary open cover $\{U_\alpha\}_{\alpha\in\Lambda}$ of $U$ and for every collection of elements $\{a_\alpha\}_{\alpha\in\Lambda}$ with $a_\alpha\in\mathcal{F}(U_\alpha)$.

\end{mydef}

\section{Realizing WCVP and PCVP Structure}\label{secB}
Elementary degree theory and the examples of \Cref{secA} would suggest that for ``nice enough" spaces there is generally a sharp partition of behaviour among the classes of fixed-point-free, fixed-point-possessing, surjective, and non-surjective maps. The special case of self-homeomorphisms is particularly distinguishing and warrants the following definition.
\begin{mydef}\label{defB1}
A space $X$ is $\mathsf{Homeo}$-WCVP (respectively $\mathsf{Homeo}$-PCVP) if it satisfies the WCVP (respectively PCVP) conditions given that one of the maps is a homeomorphism.
\end{mydef}
Before investigating the coincidence value properties in generality it is worthwhile to gain intuition by examining the case of spheres, using the already well developed geometric and algebraic tools available to us.
\begin{mypro}\label{proB1}
Every even dimensional sphere is both a $\mathsf{Homeo}$-WCVP and a uniformly $\mathsf{Homeo}$-2-PCVP space.
\end{mypro}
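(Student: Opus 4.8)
The plan is to convert every coincidence question into a fixed-point question and feed it to the Lefschetz fixed-point theorem, exploiting that an even sphere $\mathbf{S}^{2k}$ has Euler characteristic $2$. For any self-map $\psi:\mathbf{S}^{2k}\longrightarrow\mathbf{S}^{2k}$ the reduced homology is concentrated in degrees $0$ and $2k$, so $\psi_*$ acts as the identity on $H_0$ and as multiplication by $\deg\psi$ on $H_{2k}$, giving $L(\psi)=1+(-1)^{2k}\deg\psi=1+\deg\psi$. Hence $L(\psi)\neq0$ unless $\deg\psi=-1$, and in particular a fixed-point-free self-map of an even sphere must have degree $-1$. Since one of the commuting maps—say $f$—is a homeomorphism, $\deg f=\pm1$ and $f^{-1}$ exists; I would then use the elementary identity $\mathcal{CV}(f^n,g^m)=\mathcal{FP}(f^{-n}g^m)$, valid because $f^n$ is again a homeomorphism, so that a coincidence value for $(f^n,g^m)$ is exactly a fixed point of $f^{-n}g^m$. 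Because $(\deg f)^{-1}=\deg f$, we have $\deg(f^{-n}g^m)=(\deg f)^n(\deg g)^m$.

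For the uniform $2$-PCVP statement of \Cref{defA2} I would simply choose the exponents so that Lefschetz applies. Writing $\varepsilon=\deg f\in\{\pm1\}$ and $d=\deg g\in\mathbb{Z}$, the map $f^{-n}g^m$ has forced fixed points unless $\varepsilon^n d^m=-1$. If $|d|\geq2$ or $d=0$ this never occurs and $(n,m)=(1,1)$ works; if $d=1$ then $(n,m)=(2,1)$ gives $\varepsilon^2 d=1$; and if $d=-1$ then $(n,m)=(2,2)$ (or $(1,2)$ when $\varepsilon=1$) gives $\varepsilon^n d^2=1$. In every case some pair with $n,m\leq2$ yields $L(f^{-n}g^m)=1+\varepsilon^n d^m\neq0$, hence a point $x$ with $f^n(x)=g^m(x)$, establishing the uniform bound $k=2$. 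I emphasize that this step does not even require $f$ and $g$ to commute.

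For the $\mathsf{Homeo}$-WCVP statement I would split along the two hypotheses of \Cref{defA1}. When both maps are fixed-point-free, the opening observation forces $\deg f=\deg g=-1$, so $\deg(f^{-1}g)=1$ and $L(f^{-1}g)=2\neq0$; Lefschetz then produces a fixed point of $f^{-1}g$, i.e. a genuine coincidence value. When both maps have fixed points the same argument closes the case \emph{as long as} $\deg f\cdot\deg g\neq-1$, since there $L(f^{-1}g)=1+\deg f\,\deg g\neq0$.

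The main obstacle is the residual both-have-fixed-points case with $\deg f\cdot\deg g=-1$, equivalently $\{\deg f,\deg g\}=\{1,-1\}$: here $L(f^{-1}g)=0$ and the Lefschetz theorem is silent, and one cannot pass to squares as in the PCVP argument because WCVP demands a genuine coincidence rather than a periodic one. The route I would take is to exploit the commuting relations together with the geometry of the fixed-point set of the degree $-1$ map among $f,g$: as noted at the start of \Cref{secA}, commuting maps preserve one another's fixed-point sets, so $f$ restricts to a self-homeomorphism of the closed invariant subspace $\mathcal{FP}(g)$ (cf. \Cref{proA1} and \Cref{proA3}), reducing the problem to a coincidence or fixed-point question on these lower-dimensional invariant sets. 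Controlling the topology of those invariant subspaces—and thereby ruling out the degree-mismatch configuration—is the crux of the argument and precisely where the low-dimensional hypotheses must be brought to bear.
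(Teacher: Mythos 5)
Your Lefschetz reduction $\mathcal{CV}(f^n,g^m)=\mathcal{FP}(f^{-n}g^m)$ is exactly the paper's mechanism, and your treatment of the uniform $2$-PCVP half and of the both-fixed-point-free half of the WCVP is correct and matches the paper: the paper simply takes $(n,m)=(2,2)$ throughout, since $\deg(f^{-2}g^2)=\deg(f^{-1}g)^2\geq0\neq-1$, so your finer case split on $\deg g$ is sound but buys nothing extra.

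The residual case you flag --- both maps with fixed points but $\deg f\cdot\deg g=-1$ --- is a genuine gap, and the route you propose cannot close it, because the configuration you hope to rule out actually occurs. Take $f$ to be the reflection of $\mathbf{S}^{2}\subset\mathbb{R}^{3}$ in the last coordinate, so $\deg f=-1$ and $\mathcal{FP}(f)$ is the equator, and take $g$ to be rotation by an irrational angle about the polar axis, so $\deg g=1$ and $\mathcal{FP}(g)$ is the pair of poles. These commute, both have fixed points, and $f(x)=g(x)$ forces $x_3=-x_3$ together with $(x_1,x_2)$ fixed by the irrational rotation, i.e.\ $x=0$, which is not on the sphere; hence $\mathcal{CV}(f,g)=\emptyset$. (Simultaneous irrational rotations in the $k$ coordinate $2$-planes give the same phenomenon on every $\mathbf{S}^{2k}$.) Your proposed reduction to the invariant sets $\mathcal{FP}(f)$ and $\mathcal{FP}(g)$ lands precisely on an odd-dimensional sphere carrying an irrational rotation, the paper's own standard example of coincidence failure, so no amount of control over those subspaces will rescue the argument. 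For comparison, the paper's proof of this same case reads ``if both $f$ and $g$ possess fixed points, then $\deg(f^{-1}g)=(-1)\cdot(-1)=1$,'' which tacitly uses the claim that a self-map of $\mathbf{S}^{2n}$ has degree $-1$ \emph{only if} it is fixed-point-free; that converse is false (the reflection is the counterexample), so the published argument breaks at exactly the point where yours does. Your instinct that this is the crux is correct; the honest conclusion is that the $\mathsf{Homeo}$-WCVP half of the statement fails for the pair above, while the $\mathsf{Homeo}$-$2$-PCVP half survives (here $\mathcal{CV}(f^2,g^2)=\mathcal{FP}(g^2)$ is the pair of poles).
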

\begin{proof}
Let $f,g:\mathbf{S}^{2n}\longrightarrow\mathbf{S}^{2n}$ be commuting maps such that $f$ (hence also $f^{-1}$) is a homeomorphism. By the even dimensional assumption we know from basic degree theory that a self-map of $\mathbf{S}^{2n}$ is fixed-point-free if and only if its degree is equal to $-1$. Moreover, since degree is multiplicative it is easy to see that
\[1=\mathsf{deg}(Id_{\mathbf{S}^{2n}})=\mathsf{deg}(f)\cdot\mathsf{deg}(f^{-1})\implies\mathsf{deg}(f)=\mathsf{deg}(f^{-1})\in\{1,-1\}\]
Concerning the $\mathsf{Homeo}$-WCVP, if both $f$ and $g$ are fixed-point-free, then $\mathsf{deg}(f^{-1}g)=(-1)\cdot(-1)=1$ and thus $\mathcal{FP}(f^{-1}g)=\mathcal{CV}(f,g)\neq\emptyset$. If both $f$ and $g$ possess fixed points, then $\mathsf{deg}(f^{-1}g)=(-1)\cdot(-1)=1$ and thus $\mathcal{FP}(f^{-1}g)=\mathcal{CV}(f,g)\neq\emptyset$. Following similar argumentation we see that $\mathsf{deg}(f^{-2}g^2)=\mathsf{deg}(f^{-1}g)^2\neq-1$ for any commuting pair $(f,g)$ and so  $\mathcal{FP}(f^{-2}g^2)=\mathcal{CV}(f^2,g^2)\neq\emptyset$, which proves the $\mathsf{Homeo}$-2-PCVP assertion.
\end{proof}

The following result is a direct consequence of the work of Hoffman on the noncoincidence index for manifolds (see \cite{MH84} for a definition), and the original proof can be found as an application of \cite[Proposition 3.3]{MH84}.  
\begin{mypro}\label{proB2}
Every pair of fixed-point-free commuting maps $f,g:\mathbf{S}^{2n}\longrightarrow\mathbf{S}^{2n}$ possesses a coincidence value.
\end{mypro}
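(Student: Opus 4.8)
The plan is to bypass the machinery of the noncoincidence index entirely and argue directly with Brouwer degree, exploiting the same parity phenomenon that drove \Cref{proB1}. The single geometric input I would isolate first is a coincidence criterion for even spheres: if a pair $f,g:\mathbf{S}^{2n}\longrightarrow\mathbf{S}^{2n}$ satisfies $\mathcal{CV}(f,g)=\emptyset$, then $f$ is homotopic to $a\circ g$, where $a$ denotes the antipodal map. To see this, observe that coincidence-freeness, $f(x)\neq g(x)$ for all $x$, means precisely that $f(x)$ and $-g(x)=a(g(x))$ are never antipodal, so the straight-line homotopy $H(x,t)=\tfrac{(1-t)f(x)-t\,g(x)}{\|(1-t)f(x)-t\,g(x)\|}$ is available (its denominator vanishes only if $(1-t)f(x)=t\,g(x)$, and taking norms forces $|1-t|=|t|$, hence $t=\tfrac12$ and $f(x)=g(x)$, which is excluded). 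Thus $H$ realizes $f\simeq a\circ g$, running from $f$ at $t=0$ to $-g$ at $t=1$.

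The degree bookkeeping then finishes the argument. Since $\mathsf{deg}(a)=(-1)^{2n+1}=-1$ on an even sphere, multiplicativity of degree under the homotopy above yields $\mathsf{deg}(f)=\mathsf{deg}(a)\cdot\mathsf{deg}(g)=-\mathsf{deg}(g)$; equivalently, $\mathcal{CV}(f,g)=\emptyset$ forces $\mathsf{deg}(f)+\mathsf{deg}(g)=0$. Next I would invoke the fact recalled in the proof of \Cref{proB1}---that a self-map of $\mathbf{S}^{2n}$ is fixed-point-free if and only if its degree equals $-1$---to conclude that the hypotheses give $\mathsf{deg}(f)=\mathsf{deg}(g)=-1$. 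But then $\mathsf{deg}(f)+\mathsf{deg}(g)=-2\neq0$, contradicting coincidence-freeness; hence $\mathcal{CV}(f,g)\neq\emptyset$.

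I expect the only delicate point to be the verification that $H$ is nowhere undefined, i.e.\ that the convex combination in the numerator never passes through the origin; this is exactly where coincidence-freeness is genuinely used, and the norm computation above is what converts that hypothesis into well-definedness of the homotopy. Two features of this route are worth flagging. First, the argument never uses that $f$ and $g$ commute, so it in fact establishes the formally stronger coincidence statement for arbitrary fixed-point-free pairs; the commutativity hypothesis becomes relevant only when one wishes to propagate this conclusion into the PCVP-type statements elsewhere via \Cref{proA3}. Second, the method is visibly special to even dimensions, since on odd spheres $\mathsf{deg}(a)=1$ and the sign cancellation that manufactures the contradiction disappears---consistent with the irrational-rotation obstruction on $\mathbf{S}^1$ recorded after \Cref{thmA1}.
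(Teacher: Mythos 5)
Your argument is correct, but it is a genuinely different route from the paper's. The paper deduces the result from Hoffman's noncoincidence index machinery: it verifies that $\mathbf{S}^{2n}$ satisfies the hypotheses of \cite[Theorem 1.1]{MH89} (nonzero Euler characteristic, $\beta_1=\beta_2=1$, every degree-zero self-map has a fixed point), concludes that the noncoincidence index is at most $\beta_1^2+\beta_2^2=2$, and hence that any two fixed-point-free self-maps already share a coincidence value. Your proof replaces this citation with a self-contained degree computation: the normalized straight-line homotopy shows that coincidence-freeness forces $f\simeq a\circ g$, hence $\mathsf{deg}(f)=-\mathsf{deg}(g)$, which is incompatible with both maps being fixed-point-free since that forces $\mathsf{deg}(f)=\mathsf{deg}(g)=-1$. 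Your verification that the homotopy is well defined is exactly right (the denominator can only vanish at $t=\tfrac12$, where it would force $f(x)=g(x)$). One small caution: you quote the biconditional from the proof of \Cref{proB1} (``fixed-point-free if and only if degree $-1$''), but you only need the forward implication, which is the direction that is actually valid -- a degree $-1$ self-map of $\mathbf{S}^{2n}$ can certainly have fixed points. As for what each approach buys: the paper's route embeds the proposition in the general noncoincidence-index framework, which applies to a wider class of manifolds satisfying the hypotheses of \cite{MH89} and explains the remark following the proposition about odd spheres having infinite noncoincidence index; your route is elementary, uses only the degree-theoretic tools already deployed in \Cref{proB1}, makes explicit that commutativity is never used (which is equally true of, but less visible in, the paper's argument), and transparently locates the failure in odd dimensions in the sign of $\mathsf{deg}(a)$.
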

\begin{proof}
First note that $\mathbf{S}^{2n}$ is an oriented compact manifold with Euler characteristic $\chi(\mathbf{S}^{2n})=2\neq0$ and Betti numbers $\beta_1=\beta_2=1$; moreover we know that every degree zero self-map must possess a fixed point. It follows that $\mathbf{S}^{2n}$ satisfies the conditions of \cite[Theorem 1.1]{MH89}, and thus the noncoincidence index of $\mathbf{S}^{2n}$ is less than or equal to $\beta_1^2+\beta_2^2=2$. Hence any collection $\{f_1,f_2,\ldots,f_k\}$ of two or more fixed-point-free self-maps must possess a distinct pair $(f_i,f_j)$ such that $\mathcal{CV}(f_i,f_j)\neq\emptyset$.
\end{proof}

The existence of irrational rotations shows that such results are false for the circle. In fact, since $\chi(\mathbf{S}^{2n+1})=0$ by \cite[Theorem 2.1]{MH84} the noncoincidence index of every odd dimensional sphere must be infinite. Thus there exists a collection  $\{f_1,f_2,\ldots\}$ of fixed-point-free self-maps $f_i:\mathbf{S}^{2n+1}\longrightarrow\mathbf{S}^{2n+1}$ such that $\mathcal{CV}(f_i,f_j)=\emptyset$ for every distinct pair $(f_i,f_j)$\footnote{Note that this is technically not strong enough to disprove WCVP structure since we cannot assume that any pair of maps commute.}. However, it is easy to prove by explicit and elementary argument that imposition of non-surjectivity recovers the existence of coincidence points for the circle.
\begin{myex}\label{exaB1}
Every pair of commuting maps $f,g:\mathbf{S}^1\longrightarrow\mathbf{S}^1$ possesses a coincidence value given that one map is not surjective.
\end{myex}
\begin{proof}
Since the circle is a compact and connected Hausdorff space by \Cref{lemA1} there exists a compact and connected subspace $K\subseteq\mathbf{S}^1$ such that $f(K)=g(K)=K$. If we suppose that $g$ is not surjective, then $K$ is a strict subspace of $\mathbf{S}^1$. However, every connected subspace of the circle is path connected and so the only such compact proper subspace is a closed arc. Every closed arc is homeomorphic to the unit interval and thus has the FPP, since the FPP is a topological invariant. Given that $x_1=e^{2\pi i\theta_1}$ and $x_2=e^{2\pi i\theta_2}$ are the endpoints of $K$ then
\[\gamma(t):=e^{2\pi i(t\theta_2+(1-t)\theta_1)}\qquad t\in[0,1]\]
describes a path connecting the endpoints. Furthermore we can order $K$ according to $\gamma(t_x)=x\geq y=\gamma(t_y)$ if $t_x\geq t_y$. Hence $f(x)=g(x)$ if and only if $\gamma(t_{f(x)})=\gamma(t_{f(x)})$ if and only if $t_{f(x)}=t_{g(x)}$. Suppose without loss of generality that $\gamma(t_{f(x)})=f(x)>g(x)=\gamma(t_{g(x)})$ for all $x$-- otherwise by definition of $\gamma$ there must exist $x_0\in K$ such that $t_{f(x_0)}=t_{g(x_0)}$. The set $\mathcal{FP}(f)\cap K$ is itself compact and thus there exists a maximal $z\in\mathcal{FP}(f)\cap K$. By the commuting property we have that $g(z)\in\mathcal{FP}(f)\cap K$, which leads to the contradiction of the maximality of $z$, namely $z=f(z)<g(z)=f(g(z))$
\end{proof}

The fact that CVP is equivalent to WCVP+FPP but yet is still generally more restrictive than PCVP+FPP might imply that the WCVP is a stronger notion than PCVP. We dot not attempt to investigate any such claim in this direction, however there is an easy implication in the case of $\mathbf{S}^{2n}$.
\begin{mylem}\label{lemB1}
If an even dimensional sphere possesses the WCVP then it is also a PCVP space.
\end{mylem}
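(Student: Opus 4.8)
The plan is to reduce every commuting pair to one of the two cases already handled by the WCVP hypothesis, using only the behaviour of degree under composition that was recorded in \Cref{proB1} and \Cref{proB2}. Recall that to establish the PCVP on $\mathbf{S}^{2n}$ I must produce, for each commuting pair $f,g:\mathbf{S}^{2n}\longrightarrow\mathbf{S}^{2n}$, a point $x$ together with positive exponents realizing $x\in\mathcal{CV}(f^p,g^q)$. When $f$ and $g$ are both fixed-point-free, or both possess a fixed point, the WCVP hypothesis already delivers such a point with both exponents equal to $1$. Hence the entire content of the lemma lies in the single remaining ``mixed'' configuration, in which exactly one of the two maps is fixed-point-free; the argument consists of converting this into the ``both have fixed points'' case at the cost of passing to a square.

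First I would recall the degree-theoretic input: on an even dimensional sphere the Lefschetz number of a self-map $h$ equals $1+\mathsf{deg}(h)$, so $h$ must have a fixed point whenever $\mathsf{deg}(h)\neq-1$, and in particular fixed-point-freeness forces $\mathsf{deg}(h)=-1$. Suppose then, without loss of generality, that $f$ is fixed-point-free, so $\mathsf{deg}(f)=-1$, while $g$ possesses a fixed point. Since degree is multiplicative we have $\mathsf{deg}(f^2)=(-1)^2=1$, whence the Lefschetz number of $f^2$ is $2\neq0$ and $f^2$ is guaranteed a fixed point. The essential observation is thus that squaring a degree $-1$ map produces a degree $1$ map that can no longer be fixed-point-free.

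It then remains only to check the bookkeeping that makes the WCVP applicable to the substituted pair. Because $f$ and $g$ commute, so do $f^2$ and $g$, and both now possess fixed points, so the WCVP hypothesis yields a point $x$ with $f^2(x)=g(x)$, that is $x\in\mathcal{CV}(f^2,g)$; the symmetric subcase, with $g$ fixed-point-free and $f$ fixed, gives $x\in\mathcal{CV}(f,g^2)$. Assembling the three cases establishes the PCVP, and in fact the uniform $2$-PCVP since $\max\{p,q\}\leq 2$ throughout. The only genuine obstacle is the mixed configuration, and it is completely dissolved by the degree computation above; every other step is the routine verification that squaring preserves commutativity and that the WCVP hypothesis is invoked solely for pairs of like type.
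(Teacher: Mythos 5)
Your proof is correct and follows essentially the same route as the paper: reduce to the mixed case where exactly one map is fixed-point-free, use that fixed-point-freeness forces degree $-1$ so that $f^2$ has degree $1$ and hence a fixed point, and then apply the WCVP hypothesis to the commuting pair $(f^2,g)$. The Lefschetz-number justification and the observation that the exponents are bounded by $2$ are harmless elaborations of what the paper leaves implicit.
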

\begin{proof}
Let $f,g:\mathbf{S}^{2n}\longrightarrow\mathbf{S}^{2n}$ be commuting maps. Then by assumption $\mathcal{CV}(f,g)\neq\emptyset$ if either both maps are fixed-point-free or both possess fixed points. Without loss of generality we may assume that $f$ is fixed-point-free but $\mathcal{FP}(g)\neq\emptyset$; it then follows from elementary homotopy theory that $\mathsf{deg}(f)=-1$ and $\mathsf{deg}(g)\neq-1$. Since $\mathsf{deg}(f^2)=\mathsf{deg}(f)\cdot\mathsf{deg}(f)=1\neq-1$ it follows that $f^2$ possesses a fixed point and so by the WCVP hypothesis we know that $\mathcal{CV}(f^2,g)\neq\emptyset$.
\end{proof}

\subsection{Principal Bundles and Adjunction Spaces}\label{secB1}
\begin{mydef}\label{defBa1}
Let $(E,X,\pi)$ be a fibre bundle over a locally compact Hausdorff space $X$ and $\varphi_1,\varphi_2:E\longrightarrow E$ any pair of fibre-wise commuting self-morphisms. The bundle has the fibred-WCVP if  there exists a fibre $F_x=\pi^{-1}(\{x\})$ such that $\pi(\varphi_1(F_x))=\pi(\varphi_2(F_x))$, given that each morphism either maps some fibre into itself or both do not. The collection of all such fibres satisfying $\pi(\varphi_1(F_x))=\pi(\varphi_2(F_x))$ is denoted $\mathcal{CV}(\varphi_1,\varphi_2)_{\mathrm{fibre}}$.
\end{mydef}
Note that the condition $\pi(\varphi_1(F_x))=\pi(\varphi_2(F_x))$ is equivalent to $\varphi_1(F_x)$ and $\varphi_2(F_x)$ being subsets of some common fibre, and for convenience we shall sometimes use the notation $\varphi_1(F_x)\asymp_\pi\varphi_2(F_x)$ to express this relation.

\begin{mylem}\label{lemBa1}
A locally compact Hausdorff space $X$ is a WCVP space if and only if every fibre bundle $(E,X,\pi)$ has the \textit{fibred}-WCVP.
\end{mylem}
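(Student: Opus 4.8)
The plan is to exploit the fact that a fibre-preserving self-morphism of a bundle descends to a well-defined self-map of the base, and that this descent is functorial enough to translate every ingredient of the fibred-WCVP into the corresponding ingredient of the WCVP, and back. Concretely, for a bundle self-morphism $\varphi:E\longrightarrow E$ I would first record that, because $\pi$ is surjective and $\varphi$ carries each fibre $F_x$ into a single fibre, there is a unique map $\bar\varphi:X\longrightarrow X$ with $\pi\circ\varphi=\bar\varphi\circ\pi$; its continuity follows from $\bar\varphi\circ\pi=\pi\circ\varphi$ being continuous together with $\pi$ being an open (hence quotient) map. I would then assemble the three ``dictionary'' entries that drive the equivalence: (a) $\overline{\varphi_1\varphi_2}=\bar\varphi_1\bar\varphi_2$, so a fibre-wise commuting pair induces a commuting pair on the base; (b) $\varphi$ maps some fibre into itself if and only if $\bar\varphi$ has a fixed point, since $\varphi(F_x)\subseteq F_x\iff\bar\varphi(x)=x$; and (c) $\pi(\varphi_1(F_x))=\pi(\varphi_2(F_x))$ if and only if $\bar\varphi_1(x)=\bar\varphi_2(x)$, because each $\pi(\varphi_i(F_x))=\{\bar\varphi_i(x)\}$ is a single point.

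For the forward implication, I would assume $X$ is WCVP and let $\varphi_1,\varphi_2$ be fibre-wise commuting self-morphisms of an arbitrary bundle $(E,X,\pi)$ satisfying the side condition of \Cref{defBa1}. By (a) the induced maps $\bar\varphi_1,\bar\varphi_2$ commute, and by (b) the fibred side condition (``each maps some fibre into itself, or both do not'') is exactly the WCVP side condition (``both have fixed points, or both are fixed-point-free''). Hence WCVP supplies $x$ with $\bar\varphi_1(x)=\bar\varphi_2(x)$, and by (c) the fibre $F_x$ witnesses $\pi(\varphi_1(F_x))=\pi(\varphi_2(F_x))$, so $F_x\in\mathcal{CV}(\varphi_1,\varphi_2)_{\mathrm{fibre}}$.

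For the converse it suffices to produce a single bundle whose fibred-WCVP forces the WCVP of $X$. I would take the trivial bundle $\mathcal{E}=X\times F$ and lift a commuting pair $f,g:X\longrightarrow X$ to $\varphi_i=f_i\times\mathrm{Id}_F$, the cleanest choice being $F$ a one-point space, so that $\mathcal{E}\cong X$ and the morphisms are literally self-maps of $X$. Then $\bar\varphi_i=f_i$, the pair $\varphi_1,\varphi_2$ commutes, and by (b) the two side conditions again coincide; invoking the assumed fibred-WCVP and reading off (c) yields a point $x$ with $f(x)=g(x)$ under the matching hypothesis, which is precisely the WCVP.

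The only genuine work sits in the preliminary dictionary step, and within it the single substantive topological point is the continuity of the induced base map $\bar\varphi$; everything downstream is bookkeeping. I expect the main obstacle to be phrasing the side-condition correspondence (b) so that the logical form ``both or neither'' matches on the nose in both directions, and confirming that the one-point (or product) bundle used for the converse genuinely meets the local-triviality requirement of \Cref{defA5}, so that it qualifies as a fibre bundle over the locally compact Hausdorff space $X$.
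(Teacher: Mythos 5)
Your proof is correct, and while the forward direction follows the paper's strategy in spirit, both halves are executed differently enough to be worth contrasting. In the forward direction the paper also descends to the induced base maps $h_i$ and applies the WCVP of $X$ there, but it then spends considerable effort with local trivializations and transition functions $\rho_{\beta_1\beta_2}$ to convert a coincidence of the projected maps back into membership in $\mathcal{CV}(\varphi_1,\varphi_2)_{\mathrm{fibre}}$; your observation (c), that $\pi(\varphi_i(F_x))$ is literally the singleton $\{\bar\varphi_i(x)\}$ because $\varphi_i$ carries fibres into fibres, makes that entire detour unnecessary, and your continuity argument for $\bar\varphi$ via $\pi$ being an open surjection correctly recovers what the paper simply takes as part of the morphism data $(\varphi_i,h_i)$. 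The converse is where you genuinely diverge: the paper works with the trivial bundle $(\mathcal{E},X,p)$ for a general fibre $F$ and manufactures fibre-wise commuting morphisms covering $f$ and $g$ by composing isomorphisms onto the pullback bundles $f^*\mathcal{E}$, $g^*\mathcal{E}$ with the canonical projections of \Cref{defA7}, whereas you lift directly by $f\times\mathrm{Id}_F$ (or collapse to the one-point-fibre bundle $(X,X,Id_X)$, under which fibred-WCVP \emph{is} WCVP verbatim). Your route is more elementary and is in fact implicitly endorsed by the paper's own remark following the lemma that $X$ is a fibre bundle over itself with fibre a point; the paper's heavier pullback machinery buys nothing extra for this lemma itself, though it foreshadows the constructions reused in \Cref{corBa1} and \Cref{lemBa4}. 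The two points you flag as needing care, the on-the-nose matching of the ``both or neither'' side conditions via your dictionary entry (b) and the local triviality of the product/identity bundle, both check out against \Cref{defBa1} and \Cref{defA5}.
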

\begin{proof} Let $(E,X,\pi)$ be a fibre bundle with $(\varphi_1,h_1)$ and $(\varphi_2,h_2)$ self-morphisms of the bundle. By the definition of local triviality we always have the following diagram of two commuting squares where $p:X\times F\longrightarrow X$ is the usual projection map and  $\{U_\alpha,\phi_\alpha\}$ a family of local trivializations.
\[\begin{tikzcd} 
\pi^{-1}(U_\alpha) \arrow[r,"\varphi_i"] \arrow[d,"\pi"] & \pi^{-1}(U_{\beta_i}) \arrow[d,"\pi"] \arrow[r,"\phi_{\beta_i}"] & U_{\beta_i}\times F \arrow[d,"p"] \\
U_\alpha \arrow[r,"h_i"] & U_{\beta_i}  \arrow[r,"Id_X"] & U_{\beta_i}
\end{tikzcd}\]
If $\varphi_1,\varphi_2:E\longrightarrow E$ commute fibre-wise then it immediately follows from the left-hand square that $h_1,h_2:X\longrightarrow X$ are commuting self-maps.
\[h_1h_2=\pi\varphi_1\pi^{-1}\circ\pi\varphi_2\pi^{-1}=\pi\varphi_1\varphi_2\pi^{-1}=\pi\varphi_2\varphi_1\pi^{-1}=\pi\varphi_2\pi^{-1}\circ\pi\varphi_1\pi^{-1}=h_2h_1\]
It is also straightforward to verify that $\varphi_i$ fixes a fibre $F_x=\pi^{-1}(\{x\})$ if and only if $x$ is a fixed point of $h_i$. Assuming that that $X$ is a WCVP space, there exists $z\in\mathcal{CV}(h_1,h_2)$ from which it follows that $h_1(z)=h_2(z)\in U_{\beta_1}\cap U_{\beta_2}$; moreover, by the right-hand square $h_i\equiv Id_X^{-1}p\phi_{\beta_i}\varphi_i\pi^{-1}$ so $z\in\mathcal{CV}(p\phi_{\beta_1}\varphi_1\pi^{-1},p\phi_{\beta_2}\varphi_2\pi^{-1})$ and hence the choice of continuous maps $h_1,h_2$ does not matter. It is clear from the definition of $p$ that $z\in\mathcal{CV}(p\phi_{\beta_1}\varphi_1\pi^{-1},p\phi_{\beta_2}\varphi_2\pi^{-1})$ if and only if $\phi_{\beta_1}\varphi_1(F_z)$ is contained within the same fibre as $\phi_{\beta_2}\varphi_2(F_z)$. Application of the atlas/chart lemma for fibre bundles provides existence of a transition function $\rho_{\beta_1\beta_2}:U_{\beta_1}\cap U_{\beta_2}\longrightarrow\mathsf{Homeo}(F)$ such that $\phi_{\beta_1}\phi_{\beta_2}^{-1}(x,y)=(x,\rho_{\beta_1\beta_2}(x)\cdot y)$ for every $(x,y)\in U_{\beta_1}\cap U_{\beta_2}\times F$. Since $\phi_{\beta_i}:F_x\longrightarrow\{x\}\times F$ is an isomorphism, considering the action over a whole fibre
\begin{equation}
\begin{gathered}
\phi_{\beta_1}\phi_{\beta_2}^{-1}(\{x\}\times F)=\bigcup_{y\in F}\phi_{\beta_1}\phi_{\beta_2}^{-1}(x,y)=\bigcup_{y\in F}(x,\rho_{\beta_1\beta_2}(x)\cdot y)\\
=\bigcup_{\rho_{\beta_1\beta_2}(x)^{-1}\cdot y\in F}(x,y)=\bigcup_{y'\in F}(x,y)=\{x\}\times F
\end{gathered}
\end{equation}
where the second to last equality stems from $\rho_{\beta_1\beta_2}(x)$ belonging to $\mathsf{Homeo}(F)$. We thus obtain an equivalence as fibre-wise maps $\phi_{\beta_1}\equiv_{\mathrm{fibre}}\phi_{\beta_2}$ for all $x\in U_{\beta_1}\cap U_{\beta_2}$, and it follows that $(E,X,\pi)$ has the fibred-WCVP
\[\phi_{\beta_1}\varphi_1(F_z)\asymp_p\phi_{\beta_2}\varphi_2(F_z)\implies\varphi_1(F_z)\asymp_\pi\phi_{\beta_1}^{-1}\phi_{\beta_2}\varphi_2(F_z)\equiv_{\mathrm{fibre}}\phi_{\beta_2}^{-1}\phi_{\beta_2}\varphi_2(F_z)\asymp_\pi\varphi_2(F_z)\]
Conversely, suppose that every $(E,X,\pi)$ is a fibred-WCVP space, then this holds in particular for the trivial bundle $(\mathcal{E},X,p)$. Given that $f,g:X\longrightarrow X$ are commuting self-maps, we construct the pullback bundles $(f^*\mathcal{E},X,p_f)$ and $(g^*\mathcal{E},X,p_g)$, which fit into the following diagrams of commuting squares 
\[\begin{tikzcd} 
\mathcal{E} \arrow[r,"\varphi_1"] \arrow[d,"p"] & g^*\mathcal{E} \arrow[d,"p_g"] \arrow[r,"\varphi_g"] & \mathcal{E} \arrow[d,"p"] \\
X \arrow[r,"Id_X"] & X  \arrow[r,"g"] & X
\end{tikzcd}\qquad
\begin{tikzcd} 
\mathcal{E} \arrow[r,"\varphi_2"] \arrow[d,"p"] & f^*\mathcal{E} \arrow[d,"p_f"] \arrow[r,"\varphi_f"] & \mathcal{E} \arrow[d,"p"] \\
X \arrow[r,"Id_X"] & X  \arrow[r,"f"] & X
\end{tikzcd}\]
Since any pullback of the trivial bundle is isomorphic to it we have the existence of homeomorphisms $\varphi_1$ and $\varphi_2$ which restrict to fibre-wise isomorphisms. In particular, denoting $F_x=\{x\}\times F\subset\mathcal{E}$
\[\varphi_1(F_x)\subseteq\{x\}\times F_{g(x)}\qquad\mathrm{and}\qquad\varphi_2(F_x)\subseteq\{x\}\times F_{f(x)}\]
As mentioned in \Cref{defA7}, the pair $(\varphi_f,f)$ is a bundle morphism where $\varphi_f$ is the projection onto the second factor-- likewise for $(\varphi_g,g)$. Since $f$ and $g$ commute it is straightforward to verify that $\varphi_g\varphi_1,\varphi_f\varphi_2:\mathcal{E}\longrightarrow\mathcal{E}$ commute fibre-wise
\[\varphi_f\varphi_2\varphi_g\varphi_1(F_x)\asymp_pF_{fg(x)}=F_{gf(x)}\asymp_p\varphi_g\varphi_1\varphi_f\varphi_2(F_x)\] 
Moreover, $\varphi_g\varphi_1$ fixes a fibre $F_x$ if and only if $x$ is a fixed point of $g$, and analogously for $\varphi_f\varphi_2$.
It follows that there exists $F_z\in\mathcal{CV}(\varphi_g\varphi_1,\varphi_f\varphi_2)_{\mathrm{fibre}}$, and thus $\varphi_g\varphi_1(F_z)\asymp_pF_{g(z)}$ belongs to the same fibre as $F_{f(z)}\asymp_p\varphi_f\varphi_2(F_z)$, which immediately implies $z\in\mathcal{CV}(f,g)$.
\end{proof}
Note that $X=(X,X,Id_X)$ is a fibre bundle over itself with model fibre $F=\{pt\}$, hence the above provides a non-trivial generalization towards determining when $X$ does not possess the WCVP, particularly if $X$ admits non-trivial fibre bundles. To make \Cref{lemBa1} more useful with respect to positive characterization of WCVP spaces, we show it is only necessary to prove the fibred-WCVP for \textit{some} fibre bundle.

\begin{mycor}\label{corBa1}
A locally compact Hausdorff space $X$ is a WCVP space if and only if there exists a fibre bundle $(E,X,\pi)$ which has the \textit{fibred}-WCVP.
\end{mycor}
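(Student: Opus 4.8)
The plan is to obtain the forward implication immediately from \Cref{lemBa1} and to prove the genuinely new reverse implication by a direct fibre-wise lifting argument. For the forward direction, if $X$ is a WCVP space then by \Cref{lemBa1} \emph{every} fibre bundle over $X$ has the fibred-WCVP; since at least one such bundle always exists (for instance the trivial bundle, or indeed $(X,X,Id_X)$ with one-point fibre), there certainly exists a fibre bundle with the fibred-WCVP.

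For the reverse direction I would suppose that $(E,X,\pi)$ has the fibred-WCVP and let $f,g:X\longrightarrow X$ be a commuting pair which is either both fixed-point-free or both possessing fixed points. The idea is to lift $f$ and $g$ to a pair of fibre-wise commuting self-morphisms $\varphi_f,\varphi_g:E\longrightarrow E$ covering $f$ and $g$ respectively, and then read off a coincidence value of $(f,g)$ from the fibred coincidence supplied by the hypothesis. The crucial point I would exploit is that, recalling that a bundle morphism is required only to be continuous on each fibre rather than globally, one has enough freedom to lift an \emph{arbitrary} base map---in contrast to the pullback construction used in the converse half of \Cref{lemBa1}, which is available only for bundles isomorphic to their own pullbacks.

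Concretely, since every fibre $F_x=\pi^{-1}(\{x\})$ is homeomorphic to the model fibre $F$, I would fix for each $x\in X$ a homeomorphism $\theta_x:F_x\longrightarrow F$ and define $\varphi_h$, for $h\in\{f,g\}$, fibre-wise by $\varphi_h|_{F_x}=\theta_{h(x)}^{-1}\circ\theta_x:F_x\longrightarrow F_{h(x)}$. Each $\varphi_h$ is then a self-morphism covering $h$, and $\varphi_h$ maps a fibre $F_x$ into itself precisely when $h(x)=x$, so the dichotomy ``both map some fibre into itself or both do not'' for the pair $(\varphi_f,\varphi_g)$ corresponds exactly to the hypothesis that $f$ and $g$ are both fixed-point-free or both possess fixed points. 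Having produced the lifts, the final step is to invoke the fibred-WCVP of $E$ to obtain a fibre $F_z\in\mathcal{CV}(\varphi_f,\varphi_g)_{\mathrm{fibre}}$, whence $\{f(z)\}=\pi(\varphi_f(F_z))=\pi(\varphi_g(F_z))=\{g(z)\}$ and therefore $z\in\mathcal{CV}(f,g)$, proving that $X$ is a WCVP space.

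I expect the main obstacle to be arranging that the two lifts actually commute fibre-wise, since independent fibre-wise choices will in general fail to do so. The resolution is to use a \emph{single} system of identifications $\{\theta_x\}$ for both maps: with this common choice one computes $\varphi_f\varphi_g|_{F_x}=\theta_{fg(x)}^{-1}\circ\theta_x=\theta_{gf(x)}^{-1}\circ\theta_x=\varphi_g\varphi_f|_{F_x}$, where the middle equality is forced by $fg=gf$. Thus the commutativity of $(f,g)$ on the base is precisely what makes the lifts commute on $E$, and no global continuity of the $\theta_x$ is needed at any stage. It is worth flagging that this same argument, applied to the trivial bundle, recovers the converse half of \Cref{lemBa1} without recourse to pullbacks.
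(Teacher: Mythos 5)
Your argument is correct as the paper's definitions stand, and it fills in considerably more detail than the paper's own one-sentence proof, which merely instructs the reader to ``use local triviality and modify the argument at the end of \Cref{lemBa1} concerning the product bundle.'' The overall strategy is the same --- lift $(f,g)$ to a fibre-wise commuting pair of self-morphisms covering $f$ and $g$, invoke the fibred-WCVP, and project back down --- but your lifting mechanism is genuinely different. The converse half of \Cref{lemBa1} manufactured its lifts by pulling back the trivial bundle along $f$ and $g$ and exploiting the fact that every pullback of a trivial bundle is again trivial; that device is unavailable for a general $(E,X,\pi)$, since $f^*E$ need not be isomorphic to $E$. You sidestep this by fixing an arbitrary family of fibre identifications $\{\theta_x\}$ and setting $\varphi_h|_{F_x}=\theta_{h(x)}^{-1}\circ\theta_x$, which is legitimate precisely because the paper's notion of bundle morphism asks only that $\varphi$ be continuous on each fibre, not globally; your point that using one common family for both maps forces the lifts to commute (indeed to agree) fibre-wise is exactly right, and the fixed-fibre dichotomy matches the fixed-point dichotomy demanded by \Cref{defBa1}. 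The one caveat worth recording is that your construction leans entirely on this fibre-wise-only reading of ``morphism'': if self-morphisms were required to be globally continuous --- as the bundle maps of \Cref{lemBa4} implicitly are --- then $\varphi_h$ would in general fail to be one, a genuine lift of $h$ would require $h^*E\cong E$, and the reverse implication would no longer follow for an arbitrary bundle. Since \Cref{defBa1} quantifies over fibre-wise morphisms in the weak sense, your proof stands, and your closing remark that the same construction recovers the converse half of \Cref{lemBa1} without any pullback machinery is a worthwhile simplification.
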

\begin{proof}
The only if direction follows obviously from \Cref{lemBa1}. For the other direction we only need to use local triviality and modify the argument at the end of \Cref{lemBa1} concerning the product bundle. 
\end{proof}
Unfortunately, the property of $(E,X,\pi)$ \textit{being} a fibred-WCVP space clearly depends on the entire fibre bundle structure, and thus determining whether $E$ has some coincidence value property as a topological space in its own right must naturally be a much more subtle question. Indeed by \Cref{corBa1} it is obvious that if $E=X_1\times X_2$ where $X_1$ is a WCVP space but $X_2$ is not, then $(E,X_1,p_1)$ is a fibred-WCVP bundle while $(E,X_2,p_2)$ fails to be so. In particular, this observation would suggest that $E$ being a non-WCVP space generally says nothing with regards to a given $(E,X,\pi)$ having the fibred-WCVP. Indeed we have the following elementary criterion, which is again negative in nature.

\begin{mypro}\label{proBa1}
A topological space $E$ fails to have the WCVP if there exists a fibre bundle $(E,X,\pi)$ over a locally compact and second countable Hausdorff space where $X$ does not possess the WCVP.
\end{mypro}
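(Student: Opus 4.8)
The plan is to argue contrapositively at the level of the bundle, using \Cref{corBa1} to transport the failure of the WCVP from the base $X$ to the total space $E$ without ever lifting maps by hand (for a nontrivial bundle this is in general obstructed). Since $X$ is locally compact, second countable and Hausdorff but is not a WCVP space, the contrapositive of \Cref{corBa1} applies: if $(E,X,\pi)$ had the fibred-WCVP then $X$ would be a WCVP space, so the given bundle must fail the fibred-WCVP. Negating \Cref{defBa1} then supplies fibre-wise commuting self-morphisms $\varphi_1,\varphi_2:E\longrightarrow E$, with base maps $h_1,h_2$ that either both fix some fibre or both fix none, such that $\mathcal{CV}(\varphi_1,\varphi_2)_{\mathrm{fibre}}=\emptyset$; equivalently $\varphi_1(F_x)\not\asymp_\pi\varphi_2(F_x)$ for every $x\in X$.

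Next I would promote this absence of a fibre coincidence to the absence of a genuine coincidence value for the commuting self-maps $\varphi_1,\varphi_2$ of the space $E$. The point is that a bundle morphism sends each fibre into a single fibre, $\varphi_i(F_x)\subseteq F_{h_i(x)}$. Hence if some $e\in E$ satisfied $\varphi_1(e)=\varphi_2(e)$, then with $x=\pi(e)$ the common value would lie in $F_{h_1(x)}\cap F_{h_2(x)}$, forcing $h_1(x)=h_2(x)$ and therefore $\varphi_1(F_x)\asymp_\pi\varphi_2(F_x)$, contradicting the previous step. Thus $\mathcal{CV}(\varphi_1,\varphi_2)=\emptyset$ in $E$.

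The last and most delicate step is to confirm that this pair actually witnesses a failure of the WCVP on $E$, which demands that $\varphi_1$ and $\varphi_2$ be simultaneously fixed-point-free or simultaneously fixed-point-possessing as self-maps of $E$ -- a condition on the maps, whereas \Cref{defBa1} only records whether the base maps fix a fibre. The easy case is when $h_1,h_2$ are both fixed-point-free: since $\varphi_i(e)=e$ would give $h_i(\pi(e))=\pi(e)$, both $\varphi_i$ are then automatically fixed-point-free and the pair is a valid witness. I expect the main obstacle to be the remaining case, where $h_1$ and $h_2$ both have fixed points; then each $\varphi_i$ only restricts to a self-map of some invariant fibre, and such a restriction need not have a fixed point once the model fibre $F$ lacks the FPP. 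The natural remedy is to normalise the morphisms so that they act as the identity on every fibre lying over $\mathcal{FP}(h_i)$ -- precisely the behaviour of the canonical lift $f\times\mathrm{id}_F$ on a trivial bundle, whose fixed-point set is $\mathcal{FP}(f)\times F$ -- which can be arranged through the local trivializations and the patching argument already employed in \Cref{lemBa1}. With this normalisation $\mathcal{FP}(\varphi_i)\neq\emptyset$ exactly when $\mathcal{FP}(h_i)\neq\emptyset$, the two fixed-point conditions match, and the commuting pair $\varphi_1,\varphi_2$ exhibits the required failure of the WCVP on $E$.
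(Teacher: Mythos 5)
Your overall route is the one the paper takes: use \Cref{corBa1} to conclude that the given bundle cannot have the fibred-WCVP, and then argue that the total space of a bundle failing the fibred-WCVP cannot itself be a WCVP space. Your second step --- that the absence of a coincidence fibre forces $\mathcal{CV}(\varphi_1,\varphi_2)=\emptyset$ in $E$, because $\varphi_i(F_x)\subseteq F_{h_i(x)}$ and distinct fibres are disjoint --- is correct and is a worthwhile expansion of what the paper compresses into the single assertion that $E$ being a WCVP space is ``stronger'' than the fibred-WCVP. You have also correctly isolated the delicate point that the paper's two-sentence proof passes over: the fibred-WCVP hypothesis is phrased in terms of fibre-invariance, while \Cref{defA1} is phrased in terms of actual fixed points, and these do not coincide when the model fibre lacks the FPP.

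The gap is in your proposed remedy for the fixed-point-possessing case. Redefining $\varphi_i$ to be the identity on every fibre over $\mathcal{FP}(h_i)$ is not, in general, something that can be ``arranged through the local trivializations'': $\mathcal{FP}(h_i)$ is a closed and typically non-open subset of $X$, and altering a continuous map on the fibres over a closed set destroys continuity. For a concrete failure, take the trivial bundle $X\times\mathbf{S}^1$ and $\varphi_i=h_i\times\theta_i$ with $\theta_i$ an irrational rotation: this maps every fibre over $\mathcal{FP}(h_i)$ into itself yet has no fixed point, and replacing $\theta_i$ by the identity only over $\mathcal{FP}(h_i)\times\mathbf{S}^1$ is discontinuous unless $\mathcal{FP}(h_i)$ is clopen. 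Moreover, even if such a modification were continuous, you would still owe a verification that it does not create new coincidence values inside the unchanged fibres. The viable repair is not to normalise an arbitrary witness after the fact but to build the witnesses with the desired fixed-point behaviour from the outset, as in the converse half of \Cref{lemBa1}: the lifts constructed there over the trivial bundle act as the identity in the fibre direction, so that $\mathcal{FP}(\varphi_i)=\mathcal{FP}(h_i)\times F$ and the two fixed-point dichotomies agree automatically. A secondary point worth flagging: negating \Cref{defBa1} only hands you a \emph{fibre-wise} commuting pair, whereas witnessing the failure of \Cref{defA1} requires a pair that commutes pointwise, which is a further reason the witnesses must be constructed rather than merely selected.
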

\begin{proof}
If $(E,X,\pi)$ does not have the fibred-WCVP then it is impossible that the stronger condition of $E$ being a WCVP space holds. However, by assumption, $X$ is not a WCVP space and thus the result follows immediately from \Cref{lemBa1}. 
\end{proof}
Note that the above results pertaining to the non-possession of the fibred-WCVP also apply to fibre bundles over the total space $E$ given that the topology of $E$ is nice enough. To this end, when we are also interested in the total space, $X$ is usually taken to be second countable in addition to locally compact and thus is paracompact. In fact, if $X$ is a non-WCVP space and there exists a sequence of fibre bundles
\[\begin{tikzcd} 
(E_1,X,\pi_1) \arrow[r] & (E_2,E_1,\pi_2) \arrow[r] & (E_3,E_2,\pi_3) \arrow[r] &  \cdots
\end{tikzcd}\]
the above proposition along with \Cref{corBa1} immediately gives that none of the total spaces $E_i$ possess the WCVP. The essential point of the above results is that regardless of the choice of fibre or bundle twisting, the total space inherits from the base space any obstructions to satisfying the WCVP. We are now going to turn our attention to obtaining some positive results concerning total spaces $E$ where $(E,X,\pi)$ is a fibre bundle over a WCVP space $X$. Since such a total space is constructed by gluing together multiple copies of $X$ in some appropriate way, we first prove the following lemma which is essentially the natural generalization of \Cref{exaA1}.

\begin{mylem}\label{lemBa2}
If $X_1$ and $X_2$ are connected Hausdorff spaces such that $X_1$ has the CVP and $X_2$ is a WCVP space, then the disjoint union $X_1\sqcup X_2$ possesses the WCVP. However, even if all $X_i$ are CVP spaces, $X=\bigsqcup_{i=1}^nX_i$ never possesses the WCVP for finite $n\geq3$.
\end{mylem}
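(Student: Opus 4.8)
The plan is to systematize the component-wise analysis of \Cref{exaA1} through the induced action on the (finite) set of connected components. Since $X_1,X_2$ are connected, any continuous $f:X_1\sqcup X_2\longrightarrow X_1\sqcup X_2$ carries each $X_i$ into a single component, hence induces a self-map $\bar f$ of the two-point set $\{1,2\}$; the four possibilities are the identity $\mathrm{id}$, the two constants $c_1,c_2$, and the transposition $\tau$. A direct check of the four-element transformation monoid of $\{1,2\}$ shows that $\bar f$ and $\bar g$ commute precisely when $\bar f=\bar g$ or one of them is $\mathrm{id}$, so I would first record that commutativity of $f,g$ restricts the admissible pairs $(\bar f,\bar g)$ to this short list. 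The decisive structural input is that $X_1$ has the CVP and hence (by the chain $\mathrm{CVP}\subset\mathrm{FPP}$) the FPP: this forces every map with $\bar f\in\{\mathrm{id},c_1\}$ to have a fixed point, since its restriction $f|_{X_1}$ is a self-map of $X_1$, while every $\tau$-map is automatically fixed-point-free and the fixed-point status of a $c_2$-map is governed entirely by $X_2$.

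With this dictionary, the WCVP hypothesis ``both fixed-point-free or both with fixed points'' prunes the case list dramatically. In the both-with-fixed-points branch only $(\bar f,\bar g)$ equal to $(\mathrm{id},\mathrm{id})$, $(c_1,c_1)$, $(c_2,c_2)$, $(\mathrm{id},c_1)$, $(\mathrm{id},c_2)$ (and their transposes) survive, while in the both-fixed-point-free branch only $(\tau,\tau)$ and $(c_2,c_2)$ survive. Whenever both induced maps send some component into $X_1$ (the cases involving only $\mathrm{id}$ and $c_1$), I would restrict to the commuting self-maps $f|_{X_1},g|_{X_1}:X_1\longrightarrow X_1$ and invoke the CVP of $X_1$ to produce a coincidence inside $X_1$. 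In the two $(c_2,c_2)$ situations both maps land in $X_2$, their restrictions to $X_2$ commute, and the fixed-point status of $f|_{X_2},g|_{X_2}$ matches that of $f,g$ (any fixed point necessarily lies in $X_2$); a single application of the WCVP of $X_2$ then finishes these.

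The two cases I expect to require real work are $(\mathrm{id},c_2)$ and $(\tau,\tau)$. For the former, in the both-with-fixed-points branch the obstacle is that $g$ fixes a point of $X_2$ while $f$ might only fix a point of $X_1$, so $f|_{X_2}$ and $g|_{X_2}$ need not have matching fixed-point status and the WCVP of $X_2$ cannot be applied verbatim; I would remove this mismatch using the elementary fact recorded at the start of \Cref{secA} that commuting maps satisfy $g(\mathcal{FP}(f))\subseteq\mathcal{FP}(f)$: pushing any fixed point of $f$ forward by $g$ lands in $\mathcal{FP}(f)\cap X_2$, so $f|_{X_2}$ in fact has a fixed point and the WCVP of $X_2$ applies. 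The genuinely delicate point is $(\tau,\tau)$, where both maps swap the components, are forced to be fixed-point-free, and a coincidence $f(x)=g(x)$ can only occur within a single component; writing $a=f|_{X_1}$, $b=f|_{X_2}$, $d=g|_{X_2}$, I would observe that $gf|_{X_1}=d\circ a$ and $f^2|_{X_1}=b\circ a$ are commuting self-maps of $X_1$ (commutativity of $gf$ and $f^2$ being a one-line consequence of $fg=gf$), so the CVP of $X_1$ yields $x_*$ with $d(a(x_*))=b(a(x_*))$; setting $y_*=f(x_*)\in X_2$ gives $g(y_*)=f(y_*)$, the required coincidence. The crux is precisely this factorization through the common right factor $a$, which converts the CVP of $X_1$ into a genuine (not merely periodic) coincidence of $f$ and $g$.

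For the negative statement I would exhibit, for any $n\ge 3$ and any nonempty $X_1,\dots,X_n$, an explicit commuting fixed-point-free pair with no coincidence, thereby ignoring the internal CVP structure entirely. Fix points $p_i\in X_i$ and let $\sigma=(1\,2\,\cdots\,n)$; define $f$ and $g$ to be constant on each component with $f|_{X_i}\equiv p_{i+1}$ and $g|_{X_i}\equiv p_{i+2}$, indices taken modulo $n$. These are continuous (each $X_i$ is clopen in the disjoint union), and since they realize the commuting permutations $\sigma$ and $\sigma^2$ on components, $fg$ and $gf$ agree. Both are fixed-point-free exactly because $n\ge 3$ guarantees $i+1\not\equiv i$ and $i+2\not\equiv i$ modulo $n$, and for $x\in X_i$ the images $f(x)\in X_{i+1}$ and $g(x)\in X_{i+2}$ lie in distinct components, so no coincidence exists; this violates the WCVP. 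I expect no obstacle here beyond verifying the modular inequalities, which is exactly where the threshold $n\ge 3$ (as opposed to $n=2$, handled by the positive part) enters.
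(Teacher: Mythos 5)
Your proof is correct and follows essentially the same route as the paper: a case analysis on the induced maps of the two components, the $f^2,fg$ factorization to produce a coincidence in the component-swapping case, and the $\sigma,\sigma^2$ pair of constant maps realizing an $n$-cycle and its square for the negative statement when $n\geq3$. The one place you go beyond the paper's terser argument is the $(\mathrm{id},c_2)$ case, where your use of $g(\mathcal{FP}(f))\subseteq\mathcal{FP}(f)$ to show $f|_{X_2}$ has a fixed point before invoking the WCVP of $X_2$ makes explicit a step the paper leaves implicit.
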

\begin{proof}
By the connectedness assumption any self-map $f:X_1\sqcup X_2\longrightarrow X_1\sqcup X_2$ must have one of the following characterizations
\[\mathrm{(i)}\quad f:X_1\sqcup X_2\longrightarrow X_1\qquad\mathrm{(ii)}\quad f:X_i\longrightarrow X_i\]
\[\mathrm{(iii)}\quad f:X_1\sqcup X_2\longrightarrow X_2\qquad\mathrm{(iv)}\quad f:X_i\longrightarrow X_j\]
If $f,g:X_1\sqcup X_2\longrightarrow X_1\sqcup X_2$ are fixed-point-free commuting self-maps then they are either both of type (iii) or both of type (iv). In the first case we clearly have $f,g:X_2\longrightarrow X_2$ and thus $\mathcal{CV}(f,g)\neq\emptyset$. When $f$ and $g$ are of type (iv) we have commuting self-maps $f^2,fg:X_1\longrightarrow X_1$ which thus both possess fixed points; it follows that $x\in\mathcal{CV}(f^2,fg)$ which implies that $f(x)\in\mathcal{CV}(f,g)$. Now suppose that commuting self-maps $f,g:X_1\sqcup X_2\longrightarrow X_1\sqcup X_2$ have fixed points; then at least one map must be type (ii) and neither can be type (iv). Without loss of generality let $f$ be of type (ii); for any of the three possiblities for $g$ we have $f,g:X_1\longrightarrow X_1$ or $f,g:X_2\longrightarrow X_2$, hence $\mathcal{CV}(f,g)\neq\emptyset$.

Now suppose that $X=\bigsqcup_{i=1}^nX_i$ and $n\geq3$. The proof does not depend on the topological properties of the $X_i$, thus we may as well assume that $X=\{x_1,x_2,\ldots,x_n\}$ is a discrete set of $n$-points. Let $S_n$ denote the symmetric group on $n$-letters; since $n\geq3$ the $n$-cycle $\sigma=(12\cdots n)\in S_n$ possesses a square $\sigma^2$ which is an $n$-cycle distinct from $\sigma$. Clearly, $\sigma$ and $\sigma^2$ are commuting fixed-point-free self-maps of $X$, with $\sigma$ having order $n$; in particular, if $x_i\in\mathcal{CV}(\sigma,\sigma^2)$ then $\sigma^{n-1}(x_i)=\sigma^n(x_i)=x_i$, which is impossible. Translating this to the general case, we can always construct distinct fixed-point-free self-maps $f,f^2:X\longrightarrow X$ satisfying $f:X_i\longrightarrow X_{i+1(\mathrm{mod}\;n)}$.
\end{proof}

By \Cref{lemBa2}, the disjoint union of two copies of a connected CVP space $X$ has the WCVP, hence it is easy to see that the total space $X\times F$ of the trivial bundle is a WCVP space for any two-point discrete space $F=\{y_1,y_2\}$. More generally, since $\{pt\}\times X\simeq X$ 
\[(\{y_1\}\times X)\bigsqcup(\{y_2\}\times X)=X\times\{y_1,y_2\}=\bigsqcup_{x\in X}\{x\}\times\{y_1,y_2\}\] 
By the atlas/chart lemma, given a cover $\{U_\alpha\}$ of $X$, a unique fibre bundle structure $(E,X,\pi)$ with model fibre $F=\{y_1,y_2\}$ is determined by the group of transition functions 
\[\rho_{\alpha\beta}:U_\alpha\cap U_\beta\neq\emptyset\longrightarrow\mathsf{Homeo}(F)\cong\mathbb{Z}_2\]
Vary the product associated to each $x$ by replacing $\{x\}\times\{y_1,y_2\}$ with $F_x\simeq\{x\}\times\{y_1,y_2\}$, and define $E=\bigsqcup_{x\in X}F_x$ along with the obvious projection map $\pi:E\longrightarrow X$. The immediate question is to what extent coincidence value properties are preserved under these operations. Unsurprisingly, this is a much more subtle question to answer, though by careful technical use of the topological assumptions one is able to improve on the modest partial results given below.

\begin{mylem}\label{lemBa4}
Let $(E,X,\pi)$ be a fibre bundle with structure group $\mathbb{Z}_2$ and model fibre $F$ a discrete two-point space, where $X$ is a locally compact, second countable, and connected Hausdorff space. 
\begin{enumerate}[label={\bf(\roman{enumi})}] 
\item The total space $E$ is a uniformly-2-PCVP space with respect to bundle maps given that $X$ has the CVP.
\item If $E$ is a WCVP space, then so is $X$. Suppose that $X$ is a CVP space and every pair of fixed-point-possessing, commuting bundle maps on $E$ have a coincidence value. Then any two fixed-point-free pair of commuting bundle maps on $E$ possesses a coincidence value, given that their composition is not fixed-point-free. 
\end{enumerate}	
\end{mylem}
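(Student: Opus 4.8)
The plan is to exploit the rigidity of bundle self-maps of a connected double cover. The first and central observation I would establish is a dichotomy: since $X$ is connected and the model fibre is the two-point discrete space, every bundle map $\varphi:E\longrightarrow E$ (covering some $h:X\longrightarrow X$) is \emph{globally fibrewise-bijective} or \emph{globally fibrewise-constant}. Reading $\varphi$ in local trivializations as $(u,i)\mapsto(h(u),\psi(u,i))$, the locus where $\psi(u,0)\neq\psi(u,1)$ is clopen, so the ``bijective-versus-constant'' type is a continuous map $X\longrightarrow\{B,C\}$ and hence constant. In the bijective case $\varphi$ commutes with the fibrewise deck involution $\tau$ (the two bijections of a two-point set are interchanged by pre- or post-composing with the swap), while in the constant case $\varphi=\sigma\circ\pi$ for a continuous $\sigma:X\longrightarrow E$ lifting $h$. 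This involution $\tau$ is the main tool throughout.

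For (i), given commuting bundle maps $\varphi_1,\varphi_2$ covering $h_1,h_2$, I first pass to $C=\mathcal{CV}(h_1,h_2)$, which is nonempty since $X$ is CVP and is invariant under $h_1,h_2$ by \Cref{proA3}; over $x\in C$ both $\varphi_i$ send $F_x$ into the \emph{same} fibre $F_{h(x)}$, where $h:=h_1|_C=h_2|_C$. I then split on the two global types. If one $\varphi_i$ is constant and the other bijective, surjectivity of the bijective map onto $F_{h(x)}$ forces an honest agreement $\varphi_1(y)=\varphi_2(y)$ at some $y\in F_x$, giving $(n,m)=(1,1)$. If both are constant, commutativity over $F_x$ yields $\sigma_1(h(x))=\sigma_2(h(x))$, whence $\varphi_1^2=\varphi_2^2$ on $F_x$. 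If both are bijective and disagree over $x$ and over $h(x)$ (otherwise we are already done over one of those fibres), then $\varphi_1=\tau\varphi_2$ on both fibres and $\tau$-equivariance collapses $\varphi_1^2=\varphi_2^2$ on $F_x$. In every case a coincidence occurs with $n=m\in\{1,2\}$, which is exactly uniform $2$-PCVP for bundle maps.

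For (ii), the first assertion is the contrapositive of \Cref{proBa1}: our $(E,X,\pi)$ lies over a locally compact, second countable Hausdorff space, so $X$ non-WCVP forces $E$ non-WCVP, i.e. $E$ WCVP $\Rightarrow X$ WCVP. For the second assertion I would use that $\mathrm{CVP}\subset\mathrm{FPP}$, so $h_1,h_2$ both have fixed points; combined with the dichotomy this rules out the constant type for a fixed-point-free $\varphi_i$ (a constant $\varphi=\sigma\pi$ is forced to fix $\sigma(x)$ whenever $h$ fixes $x$), so \emph{both} $\varphi_1,\varphi_2$ are fibrewise-bijective. Being fixed-point-free, each $\varphi_i$ acts as the swap over every fixed point of $h_i$, so $\tau\varphi_i$ is the identity there and in particular has a fixed point. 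Using $\varphi_i\tau=\tau\varphi_i$ and $\tau^2=\mathrm{id}$ one checks $\tau\varphi_1$ and $\tau\varphi_2$ commute, with product $\varphi_1\varphi_2$. Thus $(\tau\varphi_1,\tau\varphi_2)$ is a commuting pair of \emph{fixed-point-possessing} bundle maps, so the hypothesis supplies $e$ with $\tau\varphi_1(e)=\tau\varphi_2(e)$; applying the injection $\tau$ gives $\varphi_1(e)=\varphi_2(e)$, the desired coincidence.

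The main obstacle is the global type dichotomy: without it one must contend with bundle maps that are bijective on some fibres and collapsing on others, for which neither the clean $\tau$-equivariance nor the constant-case commutator identity is available, and the bookkeeping in (i) becomes intractable. Establishing the dichotomy carefully — checking that the ``type'' function is genuinely continuous using only local triviality and continuity of the transition data, with no appeal to local connectedness — is the step I would treat most delicately. A secondary point worth flagging is that the $\tau$-twist seems to produce the coincidence using only that $\varphi_1,\varphi_2$ are fixed-point-free (via FPP of $X$), so the stated hypothesis that $\varphi_1\varphi_2$ be not fixed-point-free appears stronger than necessary; I would either record this as a strengthening or reserve that composition hypothesis for a more hands-on argument through \Cref{proA1} that avoids the involution.
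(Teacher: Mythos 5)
Your argument is correct, but it is organized around two structural observations that the paper's proof does not use, so the two routes are genuinely different. You first establish a global dichotomy -- every bundle map is either fibrewise-bijective on all of $E$ or fibrewise-constant on all of $E$, because the type is locally constant in any trivialization and $X$ is connected -- and you introduce the global deck involution $\tau$, which exists precisely because the structure group $\mathbb{Z}_2=\mathsf{Homeo}(F)$ is abelian and hence the fibrewise swap commutes with all transition data. The paper instead works purely locally: for (i) it produces a single coincidence fibre via \Cref{lemBa1}, enumerates the two non-coincident configurations over that fibre, resolves one by composing with $\hat{f}$ and the other by pushing forward to the image fibre (which is again a coincidence fibre by \Cref{proA3}); for the converse direction of (ii) it appeals to \Cref{corBa1}, which is the same content as your appeal to \Cref{proBa1}. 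Your three-way case split in (i) (bijective/constant, constant/constant, bijective/bijective) covers everything the paper's local casework does, and the $\tau$-equivariance computation in the doubly-bijective case correctly yields $\varphi_1^2=\varphi_2^2$ on the coincidence fibre. The most substantive divergence is in the second assertion of (ii): the paper forms the commuting fixed-point-possessing pair $(\hat{f}\hat{g},\hat{f}^2)$, for which the hypothesis that $\hat{f}\hat{g}$ is not fixed-point-free is genuinely load-bearing, whereas your pair $(\tau\varphi_1,\tau\varphi_2)$ acquires fixed points automatically over the fixed fibres of $h_1,h_2$, so you prove the conclusion without the composition hypothesis at all. You are right to flag this as a strengthening rather than an error; the trade-off is that your argument leans on the existence of $\tau$ (hence on the $\mathbb{Z}_2$ structure group and the two-point fibre), while the paper's version of the step is the one that has any hope of surviving in settings where no global involution is available.
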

\begin{proof}
\noindent\textbf{(i)} Consider an arbitrary pair of commuting self-maps $\hat{f},\hat{g}:E\longrightarrow E$. We will assume that both of these have the structure of bundle self-morphisms 
\[(\hat{f},f),(\hat{g},g):(E,X,\pi)\longrightarrow(E,X,\pi)\]
where $\hat{f}:\pi^{-1}(\{x\})\longrightarrow\pi^{-1}(\{x'\})$ if and only if $f(x)=x'$. By the proof of \Cref{lemBa1} the fibred-CVP holds for $(E,X,\pi)$ if $X$ is a CVP space, thus there exists $z\in X$ such that $\pi^{-1}(\{z\})\in\mathcal{CV}(\hat{f},\hat{g})_{\mathrm{fibre}}$. Local triviality asserts that every fibre is a discrete two-point space, thus it is not difficult to see that $\hat{f},\hat{g}:\pi^{-1}(\{z\})\longrightarrow\pi^{-1}(\{w\})$ must possess a coincidence value unless they are of the form
\[\hat{f}(z_1)=\hat{g}(z_2)=w_i\quad\mathrm{and}\quad\hat{f}(z_2)=\hat{g}(z_1)=w_j\]
\[\hat{f}(z_1)=\hat{f}(z_2)=w_i\quad\mathrm{and}\quad\hat{g}(z_2)=\hat{g}(z_1)=w_j\]
In the first case we compose by $\hat{f}$ to obtain $\hat{f}^2(z_1)=\hat{f}\hat{g}(z_2)=\hat{g}\hat{f}(z_2)=\hat{g}^2(z_1)$, so $z_1,z_2\in\mathcal{CV}(\hat{f}^2,\hat{g}^2)$. In the second case we appeal to \Cref{proA3} to realize that since $z\in\mathcal{CV}(f,g)$ the point $w=f(z)$ is also a coincidence point, hence $\pi^{-1}(\{w\})\in\mathcal{CV}(\hat{f},\hat{g})_{\mathrm{fibre}}$ by \Cref{lemBa1}. It follows that $\hat{f},\hat{g}:\pi^{-1}(\{w\})\longrightarrow\pi^{-1}(\{y\})$, where commutativity implies $\hat{f}(w_j)=\hat{f}\hat{g}(z_1)=\hat{g}\hat{f}(z_1)=\hat{g}(w_i)$; either we have $\hat{f}(w_i)=\hat{g}(w_i)$-- which clearly provides the desired result-- or $\hat{f}(w_i)=\hat{g}(w_j)$, which reduces to the previously discussed first case.

\noindent\textbf{(ii)} Consider an arbitrary pair of fixed-point-free commuting self-maps $\hat{f},\hat{g}:E\longrightarrow E$ which are also endowed with a bundle morphism structure 
\[(\hat{f},f),(\hat{g},g):(E,X,\pi)\longrightarrow(E,X,\pi)\] 
Since $X$ is an FPP space, again appealing to \Cref{lemBa1}, there exists $y,x\in X$ such that $\pi^{-1}(\{x\})\in\mathcal{FP}(\hat{f})_{\mathrm{fibre}}$ and $\pi^{-1}(\{y\})\in\mathcal{FP}(\hat{g})_{\mathrm{fibre}}$. By definition, this means that $\hat{f}:\pi^{-1}(\{x\})\longrightarrow\pi^{-1}(\{x\})$, hence it is necessary that $\hat{f}(x_1)=x_2$ and $\hat{f}(x_2)=x_1$, otherwise it is clear that $\hat{f}$ must fix one of these points. It follows that $x_1,x_2\in\mathcal{FP}(\hat{f}^2)$, and analogously we have $y_1,y_2\in\mathcal{FP}(\hat{g}^2)$. By hypothesis, $\hat{f}\hat{g}$ possesses a fixed point, thus there exists $z\in\mathcal{CV}(\hat{f}\hat{g},\hat{f}^2)$ and so $\hat{f}(z)\in\mathcal{CV}(\hat{g},\hat{f})$.

Conversely, $E$ being a WCVP space with respect to bundle maps is by definition equivalent to saying it is a fibred-WCVP space, hence the result follows from \Cref{corBa1}.
\end{proof}

The results of the this section, particularly the discussion preceding \Cref{proBa1}, raise the natural question as to how much the fibre structure influences the coincidence value properties of the total space. Indeed, we would wish to obtain a general answer as to in which instances the positive result concerning principal $\mathbb{Z}_2$-bundles can be extended.
\begin{myrem}
By \Cref{lemBa2} it is immediate that the total space $X\times F$ of any trivial bundle, with model fibre $|F|\geq3$ being a discrete finite space, cannot be a WCVP space. Since every fibre bundle is locally trivial we would surmise that the general situation is not promising, though we refrain from giving a complete treatment in this paper. On the other hand, assuming the discreteness of $F$, it is not difficult to prove such a statement for a principal $\Gamma$-bundle given that $F=\Gamma$ is of at least countable cardinality. Indeed, if we are willing to make use of heavier machinery, analogous partial results can be stated for $F=\Gamma$ a general topological group and $(E,X,\pi)$ a principal $\Gamma$-bundle. This shall be investigated in the following section.	
\end{myrem}

By definition, a fibre bundle results from the global twisting of the disjoint union of copies of the base space; another ubiquitous way of obtaining ``topologically twisted" spaces from disjoint unions is the formation of adjunction spaces. The following result is an important generalization of \Cref{exaB1}, inspired by the fact that $\mathbf{S}^1$ is an adjunction space formed from $[0,1]$ and $\mathbf{S}^2$ is an adjunction space formed from $\mathbf{D}$. For $X_1$ and $X_2$ both Hausdorff and connected, we shall quickly recall some important properties of the adjunction space $X_1\sqcup_{h,A} X_2$ and the quotient map $q:X_1\sqcup X_2\longrightarrow X_1\sqcup_{h,A} X_2$. Taking $A\subset X_1$ to be closed implies the following elementary facts (see \cite[Chapter 9]{SW04}) 
\begin{enumerate}
\item $X_1\sqcup_{h,A} X_2$ is Hausdorff and connected.
\item $q|_{(X_1\setminus A)}$ is an open embedding while $q|_{X_2}$ is a closed embedding.
\end{enumerate}
Moreover, in the case that $h$ is a homeomorphism onto its image we can write
\[X_1\sqcup_{h,A} X_2=X_2\sqcup_{h^{-1},h(A)} X_1\]
and so there exists a symmetry which allows us to identify $q(A)=q(h(A))$ and to freely swap $X_1$ with $X_2$ in the above results: namely, $q|_{(X_2\setminus h(A))}$ and $q|_{X_1}$ are open and closed embeddings respectively. Importantly, this tells us that any homeomorphism $\phi:X_1\longrightarrow X_2$ induces a homeomorphism $\hat{\phi}:q(X_1)\longrightarrow q(X_2)$. Furthermore, imposing a non-cut property of $A$ and $h(A)$ implies connectedness of $X_1\setminus A$ and $X_2\setminus h(A)$, hence also of the image of these sets under $q$. This provides the important fact that for any connected subset $U\subseteq X_1\sqcup_{h,A} X_2$ 
\begin{equation}
U\cap q(A)=\emptyset\implies\quad U\subseteq q(X_1\setminus A)\quad\mathrm{or}\quad U\subseteq q(X_2\setminus h(A))
\end{equation}
Supposing otherwise would imply existence of a decomposition of $U$ into non-empty open sets $U_1=U\cap q(X_1\setminus A)$ and $U_2=U\cap q(X_2\setminus h(A))$, which are clearly disjoint. 

\begin{mylem}\label{lemBa3}
Let $X_1$ and $X_2$ be homeomorphic connected Hausdorff spaces, where $A\subseteq X_1$ is compact and $h:A\longrightarrow X_2$ is a homeomorphism onto its image.
\begin{enumerate}[label={\bf(\roman{enumi})}] 
\item If every pair of non-surjective self maps of the adjunction space $X_1\sqcup_{h,A} X_2$  possesses a coincidence point, then $X_1$ (hence $X_2$) is a CVP space. More generally, if $X_1\sqcup_{h,A} X_2$ possesses the WCVP (PCVP) with respect to non-surjective maps, then $X_1$ is a WCVP (PCVP) space.
\item Assuming $X_1$ is a CVP space and every proper, closed, and connected subset $K\subset X_1\sqcup_{h,A} X_2$ is contained within a region homoeomorphic to $X_1$, then an arbitrary self-map $f$ of $X_1\sqcup_{h,A} X_2$ shares a coincidence point with every non-surjective $g$ which commutes with it. Under the same subspace stipulations, given that $X_1$ is a WCVP (PCVP) space, $X_1\sqcup_{h,A} X_2$ possesses the WCVP (PCVP) with respect to one map being non-surjective.
\end{enumerate}
\end{mylem}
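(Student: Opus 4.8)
The plan is to treat the two directions separately, using the fact (recalled in the introduction) that the coincidence value property passes to retracts, together with the invariant-set machinery of \Cref{lemA1} and \Cref{corA1}.

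For part \textbf{(i)} I would realize a copy of $X_1$ as a retract of the adjunction space and lift maps upward. Write $Y=X_1\sqcup_{h,A}X_2$ and let $\iota:q(X_1)\hookrightarrow Y$ be the (closed) inclusion. First I would construct a retraction $r:Y\to q(X_1)$; since $q|_{X_1}$ and $q|_{X_2}$ are closed embeddings agreeing on the compact overlap $q(A)=q(h(A))$, the pasting lemma shows such an $r$ is exactly the data of a continuous $s:X_2\to X_1$ with $s|_{h(A)}=h^{-1}$, which I would obtain from a homeomorphism $X_1\cong X_2$ chosen to restrict to $h$ on $A$ (equivalently, by extending $h^{-1}$ over $X_2$). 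Given commuting $f,g:X_1\to X_1$, set $\tilde f=\iota f r$ and $\tilde g=\iota g r$. Because $r\iota=\mathrm{id}$ these commute, and their images lie in $q(X_1)\subsetneq Y$ (proper since $X_2\setminus h(A)\neq\emptyset$), so both are non-surjective. The hypothesis then yields $y$ with $\tilde f(y)=\tilde g(y)$, whence $f(r(y))=g(r(y))$ after cancelling the injection $\iota$, giving CVP for $X_1$ and hence for $X_2\cong X_1$. For the WCVP/PCVP refinements I would note $\tilde f^{\,n}=\iota f^n r$, so coincidences of $(\tilde f^{\,n},\tilde g^{\,m})$ descend to $(f^n,g^m)$; moreover a fixed point of $\tilde f$ must lie in the image $q(X_1)$, so $\mathcal{FP}(\tilde f)\neq\emptyset$ if and only if $\mathcal{FP}(f)\neq\emptyset$, and the ``both fixed-point-free / both with fixed points'' dichotomy of \Cref{defA1} is preserved under the lift.

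For part \textbf{(ii)} I would run the trapping argument of \Cref{exaB1} in this more general setting. Given an arbitrary self-map $f$ and a non-surjective $g$ that commute, note that $g(Y)\subsetneq Y$ is a proper, compact, connected (hence closed) subset, and commutativity gives $f(g(Y))=g(f(Y))\subseteq g(Y)$, so $L:=g(Y)$ is invariant under both $f$ and $g$. Applying \Cref{lemA1} and \Cref{corA1} inside $Y$ produces a nonempty compact connected $K\subseteq L$ with $f(K)=g(K)=K$; being proper, closed, and connected, $K$ lies by hypothesis inside a region $R$ homeomorphic to the CVP space $X_1$. The commuting restrictions $f|_K,g|_K:K\to K$ then inherit a coincidence value from the coincidence value property of $X_1$, transported through $R$ by retract-stability of CVP. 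For the WCVP/PCVP versions I would replace CVP of $X_1$ by WCVP/PCVP and the pair $(f,g)$ by $(f^n,g^m)$ as needed, checking that passage to $K$ preserves the fixed-point-free / fixed-point-possessing dichotomy (using that $g$, being non-surjective, already confines every coincidence candidate to $L\subseteq R$).

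The main obstacle in (i) is the construction of the retraction $r$, i.e.\ the continuous extension of $h^{-1}$ over $X_2$: this is automatic in the motivating cases (where $h$ extends to a homeomorphism $X_1\to X_2$, as for $\mathbf{S}^2=\mathbf{D}\cup_{\partial}\mathbf{D}$) and more generally follows from the homeomorphism $X_1\cong X_2$ together with the compatibility of $h$ with the gluing, but it is precisely where the hypotheses must genuinely be invoked. The main obstacle in (ii) is the final transfer step: one must ensure the abstract coincidence value property of $X_1$ actually applies to the restricted maps on the invariant set $K$ — that $K$ is suitably embedded (a retract of, or homeomorphic to, the region $R$) so retract-stability delivers the coincidence — and that the fixed-point dichotomy survives restriction; both points rely on the non-cut and region hypotheses recalled before the lemma.
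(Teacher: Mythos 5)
Part \textbf{(i)} of your proposal is essentially the paper's argument: the paper builds $\hat f,\hat g$ on the adjunction space as $q|_{X_1}f\phi^{-1}q|_{X_2}^{-1}$ on $q(X_2)$ and $q|_{X_1}fq|_{X_1}^{-1}$ on $q(X_1)$, which is exactly your $\iota f r$ with $r$ the retraction induced by a homeomorphism $\phi:X_1\to X_2$ satisfying $\phi|_A\equiv h$. You correctly flag that the existence of such an extension of $h$ is the real hypothesis being used; the paper invokes it with the same brevity (``by hypothesis we have $h\equiv\phi|_A$''). The descent of coincidences, fixed points, and powers is handled the same way in both. No complaint here.

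Part \textbf{(ii)} has a genuine gap. You pass to the invariant set $K\subseteq g(Y)$ furnished by \Cref{lemA1}/\Cref{corA1} and then try to apply the coincidence value property of $X_1$ to the restrictions $f|_K,g|_K:K\to K$, ``transported through $R$ by retract-stability of CVP.'' But $K$ being a closed connected subset of a region $R\cong X_1$ gives you nothing: the CVP passes to retracts, not to arbitrary closed subspaces, and there is no reason $K$ is a retract of $R$ (for $X_1=\mathbf{D}$, a closed connected invariant subset need not even be an absolute retract). You half-acknowledge this in your ``main obstacle'' paragraph but do not close it, and it cannot be closed as stated. The paper's route avoids the invariant set entirely: with $f$ the non-surjective map, $f(Y)$ is a proper closed connected set contained in some $R\cong X_1$, so the \emph{compositions} $f^2$ and $fg$ carry all of $Y$ --- in particular all of $R$ --- into $R$, hence restrict to commuting self-maps of the \emph{entire} region $R$. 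Since $R$ is homeomorphic to $X_1$, \Cref{thmA1} lets you apply the CVP (resp.\ WCVP, PCVP) of $X_1$ to $R$ itself, producing $x\in R$ with $f^2(x)=fg(x)$, whence $f(x)\in\mathcal{CV}(f,g)$; the PCVP case uses $f^k$ and $fg$ with $kn>m$ and the identity $f^{kn-m}(f^m(x))=g^m(f^m(x))$. As a secondary point, your appeal to \Cref{lemA1} also requires $Y$ to be compact and second countable, which is not among the stated hypotheses; the paper's argument does not need that lemma at all.
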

\begin{proof}
\noindent\textbf{(i)} The forward implication requires almost none of the connectivity and compactness assumptions. Given commuting self-maps $f,g:X_1\longrightarrow X_1$ and a homeomorphism $\phi:X_1\longrightarrow X_2$, we can construct the self-maps $\hat{f},\hat{g}:X_1\sqcup_{h,A} X_2\longrightarrow X_1\sqcup_{h,A} X_2$   
\[\hat{f}=\left\{\begin{array}{cc}
q|_{X_1}fq|_{X_1}^{-1} & \mathrm{for}\quad q(x)\in q(X_1) \\
q|_{X_1}f\phi^{-1}q|_{X_2}^{-1} & \mathrm{for}\quad q(y)\in q(X_2)
\end{array}\right.\qquad
\hat{g}=\left\{\begin{array}{cc}
q|_{X_1}gq|_{X_1}^{-1} & \mathrm{for}\quad q(x)\in q(X_1) \\
q|_{X_1}g\phi^{-1}q|_{X_2}^{-1} & \mathrm{for}\quad q(y)\in q(X_2)
\end{array}\right.\]
These maps are well defined and continuous since $q(A)=q(h(A))$, $q|_{X_1}$ is a homeomorphism, and by hypothesis we have $h\equiv\phi_{|A}$; moreover, since $\hat{f},\hat{g}:X_1\sqcup_{h,A} X_2\longrightarrow q(X_1)$ it is easy to see that the maps commute and thus $\mathcal{CV}(\hat{f},\hat{g})\neq\emptyset$.
\[\hat{f}\hat{g}(q(x))=q|_{X_1}fq|_{X_1}^{-1}\circ q|_{X_1}gq|_{X_1}^{-1}(q(x))=q|_{X_1}fgq|_{X_1}^{-1}=q|_{X_1}gfq|_{X_1}^{-1}=\hat{g}\hat{f}(q(x))\]
\[\hat{f}\hat{g}(q(y))=q|_{X_1}fq|_{X_1}^{-1}\circ q|_{X_1}g\phi^{-1}q|_{X_1}^{-1}(q(y))=q|_{X_1}fg\phi^{-1}q|_{X_1}^{-1}=q|_{X_1}gf\phi^{-1}q|_{X_1}^{-1}=\hat{g}\hat{f}(q(y))\]
It is similarly clear that $f$ and $g$ are fixed-point-free if and only if $\hat{f},\hat{g}$ are fixed-point-free: $x\in\mathcal{FP}(f)\Leftrightarrow q(x)\in\mathcal{FP}(\hat{f})$. If $q(x)\in\mathcal{CV}(\hat{f},\hat{g})$, then $q|_{X_1}fq|_{X_1}^{-1}(q(x))=q|_{X_1}gq|_{X_1}^{-1}(q(x))$ and $q|_{X_1}$ being a homeomorphism implies that $f(x)=g(x)$; if $q(y)\in\mathcal{CV}(\hat{f},\hat{g})$, then $q|_{X_1}f\phi^{-1}q|_{X_1}^{-1}(q(y))=q|_{X_1}g\phi^{-1}q|_{X_1}^{-1}(q(y))$ and so $f(\phi^{-1}(y))=g(\phi^{-1}(y))$. This proves the desired results concerning $X_1$ being a CVP or WCVP space. Now for the PCVP case it follows from the definitions that 
\[\hat{f}^n=\left\{\begin{array}{cc}
q|_{X_1}f^nq|_{X_1}^{-1} & \mathrm{for}\quad q(x)\in q(X_1) \\
q|_{X_1}f^n\phi^{-1}q|_{X_2}^{-1} & \mathrm{for}\quad q(y)\in q(X_2)
\end{array}\right.\]
hence analogous argumentation as before proves that $q(x)\in\mathcal{CV}(\hat{f}^n,\hat{g}^m)$ implies that $x\in\mathcal{CV}(f^n,g^m)$ and $q(y)\in\mathcal{CV}(\hat{f}^n,\hat{g}^m)$ implies that $\phi^{-1}(y)\in\mathcal{CV}(f^n,g^m)$.

\noindent\textbf{(ii)} Suppose first that $f,g:X_1\sqcup_{h,A} X_2\longrightarrow X_1\sqcup_{h,A} X_2$ are commuting self-maps which are either both fixed-point-free or possess a fixed point. Under the assumption that $f$ is not surjective, the connected and compact set
\[K=f(X_1\sqcup_{h,A} X_2)\] 
is a proper subset of the adjunction space. Using the fact that $q_{|X_1}$ is an embedding, by hypothesis there exists a homeomorphism $\phi$ such that $X_1\simeq\phi(q(X_1))$ and $K\subset\phi(q(X_1))$. By restricting to $\phi(q(X_1))$ we thus obtain a pair of commuting self-maps
\[f^2,fg:\phi(q(X_1))\longrightarrow\phi(q(X_1))\]
where \Cref{thmA1} implies that $\phi(q(X_1))$ possesses the WCVP, and the usual argument shows that $f(\phi(q(x)))\in\mathcal{CV}(f,g)$ for some $\phi(q(x))\in\mathcal{CV}(f^2,fg)$. The exact same reasoning goes through if we drop the fixed point restrictions of $f$ and $g$, and instead assume that $X_1$ is a CVP space.

Given that $X_1$ is a PCVP space, then \Cref{thmA1} implies that $\phi(q(X_1))$ possesses the PCVP. Again restricting to $\phi(q(X_1))$, for any commuting pair $f,g:X_1\sqcup_{h,A} X_2\longrightarrow X_1\sqcup_{h,A} X_2$ such that $f$ is non-surjective, we obtain that for any $k\geq2$
\[f^k,fg:\phi(q(X_1))\longrightarrow\phi(q(X_1))\]
It follows that $\mathcal{CV}(f^{kn},(fg)^m)\neq\emptyset$, and so we have $f^{kn}(q(x))=(fg)^m(q(x))$ for some $q(x)$; choosing $k$ such that $kn>m$ this is equivalent to $f^{kn-m}(f^m(q(x)))=g^m(f^m(q(x)))$, thus $\mathcal{CV}(f^{kn-m},g^m)\neq\emptyset$.
\end{proof}
We remark that the proof of the above theorem clearly shows that the notion of PCVP can be replaced with its uniform counterpart in every instance. We are now in a position to use the combined power of the results proved in this section to investigate coincidence value properties of a few important spaces.

\begin{mypro}\label{proBa2}
$\mathbf{S}^3$ is neither a PCVP nor WCVP space.
\end{mypro}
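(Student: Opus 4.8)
The plan is to reproduce, on $\mathbf{S}^3$, the irrational-rotation counterexample that was used for $\mathbf{S}^1$ in the discussion following \Cref{thmA1}, this time exploiting the free $\mathbf{S}^1$-action underlying the Hopf fibration. Regard $\mathbf{S}^3$ as the unit sphere $\{(w_1,w_2)\in\mathbb{C}^2 : |w_1|^2+|w_2|^2=1\}$ and, for a real parameter $\theta$, let $R_\theta$ denote the homeomorphism $(w_1,w_2)\mapsto(e^{2\pi i\theta}w_1,e^{2\pi i\theta}w_2)$ given by scalar multiplication by $z_\theta=e^{2\pi i\theta}$. First I would record the elementary properties of these maps: any two of them commute (scalar multiplications in $\mathbb{C}$ commute), each $R_\theta$ with $\theta\notin\mathbb{Z}$ is fixed-point-free (since $(w_1,w_2)\neq(0,0)$ forces some $w_i\neq0$, and $z_\theta w_i=w_i$ then requires $z_\theta=1$), and $R_\theta^n=R_{n\theta}$.

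Next I would set $f=R_{\theta_1}$ and $g=R_{\theta_2}$ and reduce the coincidence condition to a purely arithmetic one. A point $(w_1,w_2)$ lies in $\mathcal{CV}(f^n,g^m)=\mathcal{CV}(R_{n\theta_1},R_{m\theta_2})$ precisely when $(z_{\theta_1}^{\,n}-z_{\theta_2}^{\,m})w_i=0$ for $i=1,2$; since at least one $w_i$ is nonzero, this holds for some point of $\mathbf{S}^3$ if and only if $z_{\theta_1}^{\,n}=z_{\theta_2}^{\,m}$, that is $n\theta_1-m\theta_2\in\mathbb{Z}$. Thus $\mathcal{CV}(f^n,g^m)=\emptyset$ exactly when $n\theta_1-m\theta_2\notin\mathbb{Z}$.

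The construction is completed by a Diophantine choice of parameters: take $\theta_1,\theta_2$ so that $1,\theta_1,\theta_2$ are linearly independent over $\mathbb{Q}$ (for instance $\theta_1=\sqrt2$ and $\theta_2=\sqrt3$). Then for all $n,m\in\mathbb{N}\setminus\{0\}$ a relation $n\theta_1-m\theta_2=k$ with $k\in\mathbb{Z}$ would be a nontrivial $\mathbb{Q}$-linear dependence, the coefficient of $\theta_1$ being the nonzero integer $n$, which is impossible; hence $n\theta_1-m\theta_2\notin\mathbb{Z}$ for every such pair. Consequently $f$ and $g$ are commuting, fixed-point-free self-maps with $\mathcal{CV}(f,g)=\emptyset$, which denies the WCVP, and $\mathcal{CV}(f^n,g^m)=\emptyset$ for all $n,m\geq1$, which denies the PCVP.

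I expect no serious obstacle: the only subtlety is confirming that the coincidence set is genuinely empty on all of $\mathbf{S}^3$, rather than merely on a dense subset, which is handled by the observation that some coordinate is always nonzero; the arithmetic independence then guarantees the required non-relation uniformly in $n$ and $m$, so that a single commuting pair simultaneously rules out both properties.
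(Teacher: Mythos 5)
Your proof is correct, and it takes a genuinely different and more elementary route than the paper. You construct explicit commuting, fixed-point-free homeomorphisms of $\mathbf{S}^3$ — the diagonal rotations $R_{\theta_1},R_{\theta_2}$ with $1,\theta_1,\theta_2$ linearly independent over $\mathbb{Q}$ — and verify directly that $\mathcal{CV}(R_{\theta_1}^n,R_{\theta_2}^m)=\emptyset$ for all $n,m\geq1$; the reduction to the arithmetic condition $n\theta_1-m\theta_2\in\mathbb{Z}$ is handled carefully (some coordinate is always nonzero, so the coincidence set is either empty or all of $\mathbf{S}^3$), and the Diophantine choice rules out every pair $(n,m)$ at once, killing WCVP and PCVP simultaneously. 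The paper instead argues structurally: it exhibits the Heegaard splitting $\mathbf{S}^3\simeq\mathbf{S}^1\times\mathbf{D}\sqcup_{h,\mathbf{S}^1\times\mathbf{S}^1}\mathbf{D}\times\mathbf{S}^1$, invokes part (i) of \Cref{lemBa3} to push a hypothetical WCVP/PCVP down to the solid torus, and then defeats the solid torus via \Cref{corBa1} (as a bundle over the non-WCVP circle) and via irrational rotations $(\theta_i,Id_{\mathbf{D}})$ for PCVP. Your argument is shorter, self-contained, and avoids the adjunction-space and fibre-bundle machinery entirely; the paper's argument is designed to showcase that machinery and generalizes to other spaces admitting such splittings where explicit free circle actions may not be available. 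One could also note that your maps are left multiplications in the group of unit quaternions and hence equivariant for the transitive right-multiplication action, so the WCVP half is an instance of part (i) of \Cref{thmBb1}; your explicit computation is what additionally secures the PCVP half.
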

\begin{proof}
The key observation is that there is a Heegaard splitting of $\mathbf{S}^3$ which provides the adjunction space representation $\mathbf{S}^3\simeq\mathbf{S}^1\times\mathbf{D}\sqcup_{h,\mathbf{S}^1\times\mathbf{S}^1}\mathbf{D}\times\mathbf{S}^1$ where $h$ is the ``identity map" sending each point on the boundary of the first solid torus to the corresponding boundary point on the second solid torus. If $\mathbf{S}^3$ were a PCVP or WCVP space, then by part (i) of \Cref{lemBa3} the same would be true for $\mathbf{S}^1\times\mathbf{D}$. However, $\mathbf{S}^1\times\mathbf{D}$ cannot possess the WCVP; otherwise, viewing it as the total space of a fibre bundle over $\mathbf{S}^1$, by \Cref{corBa1} this property would be inherited by the circle. Similarly, $\mathbf{S}^1\times\mathbf{D}$ cannot possess the PCVP due to the existence of commuting self-maps $f=(\theta_1,Id_{\mathbf{D}})$ and $g=(\theta_2,Id_{\mathbf{D}})$, where $\theta_i$ is an irrational rotation.
\end{proof}

\begin{mythm}\label{thmBa2}
The closed unit disk $\mathbf{D}$ has the PCVP if and only if there exists $m,n\in\mathbb{N}\setminus\{0\}$such that $\mathcal{CV}(f^n,g^m)\neq\emptyset$ for every commuting pair $f,g:\mathbf{S}^2\longrightarrow\mathbf{S}^2$ where $f$ is not surjective. If the real projective plane $\mathbb{R}\mathbf{P}^2$ is a CVP space, then $\mathbf{S}^2$ is a uniformly-2-PCVP space with respect to morphisms.
\end{mythm}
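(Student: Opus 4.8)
The plan is to handle the two assertions through two distinct presentations of $\mathbf{S}^2$ relative to $\mathbf{D}$. For the biconditional I use the adjunction (Heegaard) presentation $\mathbf{S}^2\cong\mathbf{D}\sqcup_{h,\mathbf{S}^1}\mathbf{D}$, with $X_1=X_2=\mathbf{D}$, compact gluing locus $A=\partial\mathbf{D}=\mathbf{S}^1$, and $h$ the boundary identification (a homeomorphism onto its image), so that the hypotheses of \Cref{lemBa3} are in force. The backward implication is then immediate from \Cref{lemBa3}(i): the assumed coincidence statement for non-surjective pairs on $\mathbf{S}^2$ is precisely the assertion that the adjunction space has the PCVP with respect to one map being non-surjective, and the lemma returns that $X_1=\mathbf{D}$ is a PCVP space.

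The forward implication is the substantive half, and it is here that the main obstacle lies. One cannot quote \Cref{lemBa3}(ii) verbatim, because $\mathbf{S}^2$ fails its standing subspace hypothesis: an equatorial annulus is a proper closed connected subset contained in no disk. The repair is to notice that the argument of \Cref{lemBa3}(ii) only ever needs the image of the non-surjective map to lie in a disk. Given commuting $f,g:\mathbf{S}^2\to\mathbf{S}^2$ with $f$ non-surjective, the set $K=f(\mathbf{S}^2)$ is compact, connected, and omits a point $p$; since $\mathbf{S}^2\setminus\{p\}\cong\mathbb{R}^2$, the compact set $K$ lies inside a closed ball $B\cong\mathbf{D}$. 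Because $f$ is applied outermost in both $f^k$ and $fg$, their images lie in $f(\mathbf{S}^2)=K\subseteq B$, so both restrict to commuting self-maps of $B\cong\mathbf{D}$. Feeding the pair $(f^k|_B,\,fg|_B)$ into the PCVP of $\mathbf{D}$ produces $x\in B$ and exponents with $f^{kn}(x)=(fg)^m(x)=f^m g^m(x)$; writing $y=f^m(x)$ and arranging $kn>m$ rewrites this as $f^{kn-m}(y)=g^m(y)$, that is $\mathcal{CV}(f^{kn-m},g^m)\neq\emptyset$. The one delicate point is the exponent bookkeeping, since the integer $m$ returned by the PCVP might a priori grow with the auxiliary power $k$; I would resolve this by working with the uniform counterpart of the PCVP (legitimate here by the remark following \Cref{lemBa3}), whence $m\leq k_0$ for a fixed $k_0$ and any $k>k_0$ forces $kn\geq k>m$.

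For the second assertion I pass to the antipodal presentation. The quotient $\mathbf{S}^2\to\mathbb{R}\mathbf{P}^2$ by the free $\mathbb{Z}_2$-action is a principal $\mathbb{Z}_2$-bundle over the compact connected Hausdorff base $\mathbb{R}\mathbf{P}^2$, with discrete two-point model fibre and structure group $\mathbb{Z}_2$. This places the situation squarely in the hypotheses of \Cref{lemBa4}(i), with total space $E=\mathbf{S}^2$ and base $X=\mathbb{R}\mathbf{P}^2$. Granting that $\mathbb{R}\mathbf{P}^2$ is a CVP space, that lemma yields at once that $\mathbf{S}^2$ is a uniformly-2-PCVP space with respect to bundle morphisms, which is the claim; the only thing to verify is that the double cover genuinely meets the bundle-theoretic hypotheses (local compactness, second countability, and connectedness of the base, together with a two-point discrete fibre), all of which are standard for $\mathbb{R}\mathbf{P}^2$.
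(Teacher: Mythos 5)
Your proof follows the paper's route exactly: the biconditional via the two-disk adjunction presentation $\mathbf{S}^2\simeq\mathbf{D}\sqcup_{h,\mathbf{S}^1}\mathbf{D}$ and parts (i) and (ii) of \Cref{lemBa3}, and the second assertion via \Cref{lemBa4}(i) applied to the principal $\mathbb{Z}_2$-bundle $(\mathbf{S}^2,\mathbb{R}\mathbf{P}^2,\pi)$. Two points of comparison. The ``obstacle'' you raise against quoting \Cref{lemBa3}(ii) verbatim is not actually there: a closed equatorial annulus \emph{is} contained in a region homeomorphic to $\mathbf{D}$, since it omits a point $p$ and is therefore a compact subset of $\mathbf{S}^2\setminus\{p\}\cong\mathbb{R}^2$, hence sits inside a closed spherical cap. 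The same reasoning shows that \emph{every} proper closed connected $K\subsetneq\mathbf{S}^2$ satisfies the subspace hypothesis, and this verification---rather than any modification of the lemma---is precisely what the paper's proof supplies; your specialization to $K=f(\mathbf{S}^2)$ is the same computation and is harmless.

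The exponent bookkeeping you flag is, by contrast, a genuine subtlety, and it lives inside the proof of \Cref{lemBa3}(ii): for each auxiliary power $k$ the PCVP of the disk returns a pair $(n,m)$ that depends on $k$, so ``choosing $k$ such that $kn>m$'' is circular unless $m$ is bounded independently of $k$. The paper passes over this silently. Your repair closes the gap only by upgrading the hypothesis: the theorem assumes merely that $\mathbf{D}$ has the PCVP, whereas the remark following \Cref{lemBa3} licenses replacing PCVP by its uniform counterpart simultaneously in hypothesis \emph{and} conclusion, not assuming uniformity gratis. As written, your forward implication therefore establishes ``$\mathbf{D}$ uniformly PCVP $\Rightarrow$ sphere statement,'' which is weaker than what is claimed; either record that variant explicitly or supply an argument bounding $m$ in terms of the fixed pair $(f,g)$. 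On this step your write-up is no less complete than the paper's own---and you deserve credit for noticing the issue at all---but the fix as stated does not quite prove the theorem as stated.
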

\begin{proof}
There is a homeomorphism $\mathbf{S}^2\simeq\mathbf{D}\sqcup_{h,\mathbf{S}^1}\mathbf{D}$ where $h$ is the ``identity map" sending each point on the boundary of the first disk to the corresponding boundary point on the second disk. The forward implication thus follows immediately from part (i) of \Cref{lemBa3}. To prove the reverse implication, note that for every closed and connected $K\subsetneq\mathbf{S}^2$ along with a choice of $x\in K^c$, there exist disjoint open balls $B_K$ and $B_x$ containing $K$ and $x$ respectively. By regularity of $\mathbf{S}^2$ we can find a closed ball $\overline{B'}_x\subset B_x$, hence $x\notin\overline{B}_K\simeq\mathbf{D}$, which satisfies the hypothesis of part (ii) of \Cref{lemBa3}.

Since $(\mathbf{S}^2,\mathbb{R}\mathbf{P}^2,\pi)$ is a principal $\mathbb{Z}_2$-bundle, then the desired coincidence value relationship follows immediately from part (i) of \Cref{lemBa4}.
\end{proof}

\begin{mythm}\label{thmBa3}
The closed unit disk has the CVP if and only if $\mathcal{CV}(f,g)\neq\emptyset$ for every commuting pair $f,g:\mathbf{S}^2\longrightarrow\mathbf{S}^2$ such that $f$ is not surjective. The real projective plane is a CVP space if $\mathbf{S}^2$ is a WCVP space with respect to bundle maps; conversely, if $\mathbb{R}\mathbf{P}^2$ is a CVP space and every pair of fixed-point-possessing, commuting bundle self-maps on $\mathbf{S}^2$ possess a coincidence value, then $\mathbf{S}^2$ is a WCVP space with respect to bundle maps. 
\end{mythm}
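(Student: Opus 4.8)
The plan is to treat the three assertions in turn, reducing each to the structural results already established: the disk statement to the adjunction-space \Cref{lemBa3} via the splitting $\mathbf{S}^2\simeq\mathbf{D}\sqcup_{h,\mathbf{S}^1}\mathbf{D}$, and the two projective-plane statements to the principal $\mathbb{Z}_2$-bundle \Cref{lemBa4} via the double cover $(\mathbf{S}^2,\mathbb{R}\mathbf{P}^2,\pi)$. Only one genuinely new input is needed, a degree computation, and it occurs at the very end.

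For the first biconditional I would repeat the argument of \Cref{thmBa2} verbatim with PCVP replaced by CVP. The implication from the sphere coincidence condition to $\mathbf{D}$ having the CVP is part (i) of \Cref{lemBa3}: the maps $\hat f,\hat g$ built there from commuting $f,g$ on $\mathbf{D}$ both land in $q(X_1)$ and are therefore non-surjective, so the hypothesis on commuting pairs with one non-surjective member applies and returns a coincidence, which pulls back to a coincidence for $f,g$. The converse is part (ii) of \Cref{lemBa3}, and its subspace hypothesis---that every proper closed connected $K\subsetneq\mathbf{S}^2$ lies in a region homeomorphic to $\mathbf{D}$---is exactly the one verified in the proof of \Cref{thmBa2} by separating $K$ from a point of its complement with disjoint balls and invoking regularity of $\mathbf{S}^2$; I would simply cite that verification.

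For the second assertion I observe that $(\mathbf{S}^2,\mathbb{R}\mathbf{P}^2,\pi)$ is a principal $\mathbb{Z}_2$-bundle with discrete two-point fibre over a compact connected manifold, so \Cref{lemBa4} applies. If $\mathbf{S}^2$ is WCVP with respect to bundle maps then, by definition, the bundle is fibred-WCVP, whence the first sentence of \Cref{lemBa4}(ii) (equivalently \Cref{corBa1}) makes $\mathbb{R}\mathbf{P}^2$ a genuine WCVP space. To promote this to CVP I use that $\mathrm{CVP}=\mathrm{WCVP}+\mathrm{FPP}$ together with the fact that $\mathbb{R}\mathbf{P}^2$ has the FPP: its rational homology is concentrated in degree zero (as $H_1$ is $2$-torsion and $H_2=0$ by non-orientability), so every self-map has Lefschetz number $1\neq0$ and hence a fixed point. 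Therefore $\mathbb{R}\mathbf{P}^2$ is a CVP space.

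For the converse I would check the two cases defining WCVP with respect to bundle maps for a commuting pair $\hat f,\hat g$ of bundle self-maps of $\mathbf{S}^2$. When both have fixed points the coincidence value is provided directly by hypothesis. When both are fixed-point-free I apply the second sentence of \Cref{lemBa4}(ii), whose standing hypotheses---$\mathbb{R}\mathbf{P}^2$ being CVP and fixed-point-possessing pairs on $\mathbf{S}^2$ having coincidence values---are precisely our assumptions; the only further requirement is that the composition $\hat f\hat g$ not be fixed-point-free, and this is the one genuinely new point and the sole place I expect friction. As recorded in \Cref{proB1}, a self-map of $\mathbf{S}^2$ is fixed-point-free precisely when its degree equals $-1$; hence $\mathsf{deg}(\hat f)=\mathsf{deg}(\hat g)=-1$, so by multiplicativity $\mathsf{deg}(\hat f\hat g)=1\neq-1$ and $\hat f\hat g$ must possess a fixed point. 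The extra condition in \Cref{lemBa4}(ii), which is unremovable in that general lemma, thus becomes automatic on $\mathbf{S}^2$ through multiplicativity of degree, and \Cref{lemBa4}(ii) then supplies the coincidence value, completing the verification that $\mathbf{S}^2$ is WCVP with respect to bundle maps.
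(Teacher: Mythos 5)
Your proposal is correct and follows essentially the same route as the paper: the disk biconditional via \Cref{lemBa3} exactly as in \Cref{thmBa2} (with CVP in place of PCVP), and the sphere/projective-plane relationship via part (ii) of \Cref{lemBa4} together with the degree computation $\mathsf{deg}(\hat f\hat g)=(-1)(-1)=1\neq-1$ showing the composition of two fixed-point-free self-maps of $\mathbf{S}^2$ has a fixed point. Your explicit verification that $\mathbb{R}\mathbf{P}^2$ has the FPP (needed to upgrade WCVP to CVP) is a detail the paper leaves implicit, and is a worthwhile addition.
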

\begin{proof}
Using the same argumentation as in \Cref{thmBa2} and noting that WCVP is equivalent to CVP for the disk, the first result is a consequence of \Cref{lemBa3}. Recall that if $f,g:\mathbf{S}^2\longrightarrow\mathbf{S}^2$ are both fixed-point-free, then $\mathcal{FP}(fg)\neq\emptyset$ since $\mathsf{deg}(f)=\mathsf{deg}(f)=-1$ and so $\mathsf{deg}(fg)=1\neq-1$. Hence, since $(\mathbf{S}^2,\mathbb{R}\mathbf{P}^2,\pi)$ is a principal $\mathbb{Z}_2$-bundle, the desired coincidence value relationships between the sphere and projective plane follow immediately from part (ii) of \Cref{lemBa4}.
\end{proof}

\subsection{$\Gamma$-equivariant maps}\label{secB2}

Using the extra structure that comes from a topological space admitting a continuous $\Gamma$-action, we can prove sufficient conditions for a locally compact Hausdorff space to not possess the WCVP.
\begin{mythm}\label{thmBb1}
Let $f,g:X\longrightarrow X$ be distinct commuting self-maps on a locally compact Hausdorff space which admits a $\Gamma$-action; $X$ cannot have the WCVP if either of the following hold. 
\begin{enumerate}[label={\bf(\roman{enumi})}] 
\item $X$ admits a transitive $\Gamma$-action with $f$ and $g$ being $\Gamma$-equivariant.
\item $X$ admits a proper $\Gamma$-action for some non-compact $\Gamma$, with $f$ and $g$ being $\Gamma$-equivariant, and $\mathcal{CV}(f,g)$ being contained in a compact subset.
\end{enumerate}
\end{mythm}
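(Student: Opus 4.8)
The plan is to exploit the fact that both the coincidence set and the fixed-point sets are preserved by the group action, so that the rigidity of a transitive or proper action forces the coincidence set to be trivial. First I would record the key invariance observation: if $f$ and $g$ are $\Gamma$-equivariant (\Cref{defA3}) and $x\in\mathcal{CV}(f,g)$, then for every $\gamma\in\Gamma$ we have $f(\gamma\cdot x)=\gamma\cdot f(x)=\gamma\cdot g(x)=g(\gamma\cdot x)$, so $\gamma\cdot x\in\mathcal{CV}(f,g)$. Hence $\mathcal{CV}(f,g)$ is a $\Gamma$-invariant set; the identical computation shows $\mathcal{FP}(f)$ and $\mathcal{FP}(g)$ are $\Gamma$-invariant as well. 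Since $X$ is Hausdorff, all three sets are also closed.

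For part \textbf{(i)}, transitivity of the action means the only $\Gamma$-invariant subsets of $X$ are $\emptyset$ and $X$. Thus $\mathcal{CV}(f,g)$ is either empty or all of $X$; the latter would force $f=g$, contradicting that the maps are distinct, so $\mathcal{CV}(f,g)=\emptyset$. The same dichotomy applied to $\mathcal{FP}(f)$ and $\mathcal{FP}(g)$ shows each of $f,g$ is either the identity or fixed-point-free; discarding the degenerate situation in which one map is the identity (which, by distinctness, cannot occur for both), the pair $f,g$ is a legitimate fixed-point-free test pair for the WCVP (\Cref{defA1}), and its empty coincidence set witnesses the failure of that property.

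For part \textbf{(ii)}, I would argue by contradiction, assuming $\mathcal{CV}(f,g)\neq\emptyset$. Set $K=\mathcal{CV}(f,g)$; this set is closed and, by hypothesis, contained in a compact subset, hence compact, and it is a proper subset of $X$ because $f\neq g$ rules out $K=X$. Fix any $x\in K$. By the invariance established above, $\gamma\cdot x\in K$ for every $\gamma\in\Gamma$, while trivially $\gamma\cdot x\in\gamma\cdot K$; therefore $(\gamma\cdot K)\cap K\neq\emptyset$ for every $\gamma\in\Gamma$. In the notation of \Cref{defA4} this says exactly that $\Gamma_K=\Gamma$, which must then be compact---contradicting the assumption that $\Gamma$ is non-compact. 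Hence $\mathcal{CV}(f,g)=\emptyset$, and, exactly as in part (i), this furnishes a coincidence-free admissible commuting pair, so $X$ cannot have the WCVP.

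The substantive content of the argument is the short chain in part (ii) turning ``the orbit of a coincidence point is trapped in a compact set'' into the equality $\Gamma_K=\Gamma$; once this is seen, the properness characterization of \Cref{defA4} does all the work. I expect the only genuinely delicate point to be the bookkeeping needed to present $f,g$ as an admissible WCVP test pair---that is, verifying that we are in the ``both fixed-point-free'' (or ``both fixed-point-possessing'') case rather than a mixed one---which is precisely why the invariance of the fixed-point sets, and not merely of the coincidence set, is worth recording at the outset.
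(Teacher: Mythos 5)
Your treatment of the coincidence set itself is correct and is essentially the paper's argument in both parts: the invariance computation $f(\gamma\cdot x)=\gamma\cdot f(x)=\gamma\cdot g(x)=g(\gamma\cdot x)$ is exactly how the paper establishes (via \Cref{proA3}, applied to the translation maps $f_\gamma(x)=\gamma\cdot x$, which commute with $f$ and $g$ by equivariance) that $\mathcal{CV}(f,g)$ is $\Gamma$-invariant; transitivity then forces $\mathcal{CV}(f,g)\in\{\emptyset,X\}$ in part (i), and in part (ii) the equality $\Gamma_{\mathcal{CV}(f,g)}=\Gamma$ contradicts the compactness guaranteed by \Cref{defA4}. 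Your packaging in terms of invariant closed sets is, if anything, cleaner than the paper's.

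The genuine gap is the final sentence of your part (ii): ``exactly as in part (i), this furnishes a coincidence-free \emph{admissible} commuting pair.'' In part (i) admissibility came from transitivity --- $\mathcal{FP}(f)$ is $\Gamma$-invariant, hence empty or all of $X$ --- and that dichotomy is simply unavailable for a merely proper action: a proper action of a non-compact group admits plenty of proper, nonempty, invariant closed subsets (e.g.\ the orbit $\mathbb{Z}\subset\mathbb{R}$ for $\mathbb{Z}$ acting by translation), so invariance of $\mathcal{FP}(f)$ and $\mathcal{FP}(g)$ alone does not tell you the pair is unmixed, and a mixed pair with empty coincidence set is no witness against the WCVP (\Cref{defA1}). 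You correctly identified this bookkeeping as the delicate point at the outset, and you discharged it in part (i), but in part (ii) it needs its own argument. The paper's route is to re-run the properness contradiction on fixed-point sets rather than cite transitivity: it observes that no compact set meeting $\mathcal{CV}(f,g)$ can be forward-invariant (singletons included, so $f$ and $g$ are fixed-point-free there), and that for $g\neq Id_X$ equivariant, $\mathcal{FP}(g)=\mathcal{CV}(Id_X,g)$ contains a full $\Gamma$-orbit, whence $\Gamma_{\mathcal{FP}(g)}=\Gamma$ would again have to be compact under the same compact-containment control. You should either reproduce that argument or add the containment of the relevant fixed-point sets in a compact set as a working hypothesis; a bare back-reference to part (i) does not close the step. (The residual edge case in which one of the maps is $Id_X$ is sidestepped by you and by the paper alike, so I do not count it against you.)
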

\begin{proof}
\textbf{(i)} Firstly note that if $f$ is $\Gamma$-equivariant but not the identity function then $f$ must be fixed-point-free. Indeed, if there exists a point such that $f(x)=x$, then by the transitive nature of the $\Gamma$-action for each $y\in X$ there exists $\gamma$ such that
\[f(y)=f(\gamma\cdot x)=\gamma\cdot f(x)=\gamma\cdot x=y\]
and thus $f\equiv Id_X$. An analogous argument shows that if $f$ and $g$ are both $\Gamma$-equivariant and $\mathcal{CV}(f,g)\neq\emptyset$ then $f\equiv g$. Thus any distinct commuting $f,g:X\longrightarrow X$ provide an example of a fixed-point-free pair which cannot have a coincidence value.

\textbf{(ii)} Let us first assume that $\mathcal{CV}(f,g)$ is compact. That $\mathcal{CV}(f,g)$ is a proper subset of $X$ is obvious, otherwise $f\equiv g$. By equivariance of $f$ and $g$, for every $\alpha\in \Gamma$ the map $f_\alpha$ commutes with both $f$ and $g$, so $f_\alpha:\mathcal{CV}(f,g)\longrightarrow\mathcal{CV}(f,g)$ by \Cref{proA3}. Properness of the $\Gamma$-action implies that the following set is compact.
\[\Gamma_{\mathcal{CV}(f,g)}=\{\alpha\in \Gamma:\;\alpha\cdot\mathcal{CV}(f,g)\cap\mathcal{CV}(f,g)\neq\emptyset\}\]
However, since $\Gamma_{\mathcal{CV}(f,g)}=\Gamma$ this clearly contradicts the fact that $f_\alpha(x)=\alpha\cdot x\in\mathcal{CV}(f,g)$ for all elements of the non-compact topological group $\Gamma$. It follows that $\mathcal{CV}(f,g)$ must be empty and so $f$ and $g$ are commuting maps with no coincidence value. This proof immediately generalizes to the case where $\mathcal{CV}(f,g)$ is contained in a compact set since closed subsets are also compact. Moreover, there cannot exist any compact $K$ such that $K\cap\mathcal{CV}(f,g)\neq\emptyset$ and $f(K)\subseteq K$; otherwise, $K\cap\mathcal{CV}(f,g)=\mathcal{CV}(f_{|K},g_{|K})$ is compact and the same argument applies. In particular, since every singleton is compact, $f$ (and hence $g$) must be fixed-point-free on $\mathcal{CV}(f,g)$. Furthermore, taking $f$ to be $Id_X$ implies that apart from $Id_X$ there can exist no other $H$-equivariant map with a fixed point; otherwise, for some $g$ distinct from the identity $\mathcal{CV}(Id_X,g)=\mathcal{FP}(g)\neq\emptyset$ contradicts the proper action of $\Gamma$.
\end{proof}

\Cref{thmBb1} provides us with more than might initially be thought, due to the fact that several important classes of topological spaces are locally compact Hausdorff and admit a transitive $\Gamma$-action in one form or another. In particular, every locally compact topological group $G$ admits a regular action on itself by left multiplication. If $X$ is a topological manifold then it admits a transitive action through $\mathsf{Homeo}(X)$, its group of homeomorphisms. Moreover, if $\widetilde{X}$ is a universal cover of some  manifold $X$ with non-trivial fundamental group $\pi_1(X)$ then $\widetilde{X}$ also admits a proper action by the group of deck transformations of $X$. Since the universal cover is just the special case of a principal $\pi_1(X)$-bundle we can state the following useful result.
\begin{mycor}\label{corBb1}
If $G$ is a topological group with center $Z(G)$ of cardinality greater than two, then $G$ is not a WCVP space. If $X$ is any suitable space and $(E,X,\pi)$ a principal $\Gamma$-bundle such that $Z(\Gamma)$ is of cardinality greater than two and $\Gamma$ is non-compact, then $E$ is not a WCVP space.
\end{mycor}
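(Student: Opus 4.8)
The plan is to prove both assertions in the same way: in each case I would exhibit a single pair of distinct, commuting, fixed‑point‑free self‑maps that are equivariant for a suitable translation action, and then invoke \Cref{thmBb1}. The unifying observation is that \emph{central} elements furnish equivariant maps, because translation by a central element commutes with the entire action and is therefore equivariant, while translation by a non‑central element is not. The hypothesis that the relevant center has cardinality strictly greater than two is exactly what is needed to select two \emph{distinct} translations by two \emph{non‑identity} elements: this guarantees the resulting pair consists of two fixed‑point‑free maps, rather than pairing the identity with a single nontrivial translation (which would mix the fixed‑point and fixed‑point‑free cases and so fail to contradict the WCVP directly).

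For the first statement I would let $G$ act on itself by left multiplication, the regular — in particular transitive — action of $G$. A self‑map $f$ is $G$‑equivariant for this action exactly when $f(\gamma x)=\gamma f(x)$ for all $\gamma,x$; putting $x=\mathbbm{1}$ shows such an $f$ is the right translation $R_a\colon x\mapsto xa$ with $a=f(\mathbbm{1})$, and these are continuous since multiplication is. Two such maps $R_c,R_d$ commute precisely when $cd=dc$, so central $c,d$ always yield a commuting pair, and $R_c=R_d$ forces $c=d$. Since $|Z(G)|>2$ I would pick distinct $c,d\in Z(G)\setminus\{\mathbbm{1}\}$; then $R_c,R_d$ are distinct commuting $G$‑equivariant maps, both fixed‑point‑free (as $xc=x$ forces $c=\mathbbm{1}$). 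Part (i) of \Cref{thmBb1} now applies and shows $G$ is not a WCVP space; note that part (i) imposes no non‑compactness, matching the hypotheses of this half of the corollary.

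For the second statement, the principal $\Gamma$‑bundle $(E,X,\pi)$ carries a global free continuous right $\Gamma$‑action: in any local trivialization it is $(x,h)\cdot\gamma=(x,h\gamma)$, which is compatible with the left‑multiplication transition functions of \Cref{defA6} and so patches together globally. For $X$ suitable (locally compact, second countable Hausdorff) and $\Gamma$ non‑compact this action is proper in the sense of \Cref{defA4}, and after converting it to a left action via $\gamma\cdot e:=e\gamma^{-1}$ the hypotheses of part (ii) of \Cref{thmBb1} are in force. A translation $\phi_c\colon e\mapsto ec$ is equivariant exactly when $c\in Z(\Gamma)$, is continuous, and by freeness satisfies $\phi_c(e)=e\iff c=\mathbbm{1}$ and $\phi_c=\phi_d\iff c=d$. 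Choosing distinct $c,d\in Z(\Gamma)\setminus\{\mathbbm{1}\}$ (possible as $|Z(\Gamma)|>2$) yields distinct commuting fixed‑point‑free equivariant maps whose coincidence set $\mathcal{CV}(\phi_c,\phi_d)$ is empty by freeness, hence trivially contained in a compact subset; part (ii) of \Cref{thmBb1} then gives that $E$ is not a WCVP space.

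The main obstacle I anticipate is the topological bookkeeping in the bundle case rather than the algebra of the translations: one must verify carefully that the fibrewise right action assembles into a genuinely continuous \emph{global} action on $E$, and that this action is proper for the chosen class of suitable base spaces, so that \Cref{defA4} and hence part (ii) of \Cref{thmBb1} become available (this is the same fact underlying the remark that universal covers, as principal $\pi_1$‑bundles, admit proper deck actions). I would also record that, because freeness already forces $\mathcal{CV}(\phi_c,\phi_d)=\emptyset$, the non‑compactness of $\Gamma$ enters only to align with the hypotheses of part (ii); the essential content in both halves is the existence, guaranteed precisely by cardinality greater than two, of two non‑identity central elements producing a fixed‑point‑free commuting pair with no coincidence value.
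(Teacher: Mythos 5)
Your proposal is correct and follows essentially the same route as the paper: translations by two distinct non-identity central elements (guaranteed by the cardinality hypothesis) are continuous, commuting, fixed-point-free, and $\Gamma$-equivariant, so parts (i) and (ii) of \Cref{thmBb1} apply to $G$ and to $E$ respectively. Your added observations --- that every equivariant map for the regular action is a translation, and that freeness of the principal action already forces $\mathcal{CV}(\phi_c,\phi_d)=\emptyset$ --- are correct refinements rather than a different argument (note only that for the left regular action \emph{every} right translation is equivariant, so centrality is what makes the pair commute, not what makes it equivariant).
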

\begin{proof}
$G$ admits a regular $G$-action where the cardinality assumptions guarantee existence of distinct, non-trivial $\gamma_1,\gamma_2\in Z(G)$. The continuous functions $f_{\gamma_1},f_{\gamma_2}:G\longrightarrow G$ defined by $f_{\gamma_i}(x)=\gamma_i\cdot x$ are thus $G$-equivariant; the result follows from part (i) of \Cref{thmBb1}. Now, by definition as the total space of a principal $\Gamma$-bundle, $E$ admits a proper $\Gamma$-action. Since $\Gamma$ is non-compact, the cardinality assumptions again guarantee existence of distinct, non-trivial $\gamma_1,\gamma_2\in Z(\Gamma)$; the continuous functions $f_{\gamma_1},f_{\gamma_2}:E\longrightarrow E$ are thus $\Gamma$-equivariant and the result follows from part (ii) of \Cref{thmBb1}.
\end{proof}

In particular, since $\mathbf{S}^1$ and $\mathbb{R}$ are Abelian Lie groups, this gives us another proof-- without considering explicit rotations or translations-- that the circle and real line are not WCVP spaces. Alternatively we can argue that since $\mathbb{R}$ is the universal cover of $\mathbf{S}^1$ and $\pi_1(\mathbf{S}^1)\cong\mathbb{Z}$, then the real line does not possess the WCVP. 
\begin{myex}\label{exaBb1}
The $n$-torus $(\mathbf{S}^1)^{\times n}$ is never an WCVP space for $n\geq1$
\end{myex} 
\begin{proof}
Since the $n$-torus is an uncountable Abelian Lie group it clearly satisfies the conditions of \Cref{corBc1};  it can also be viewed as a trivial principal $\mathbf{S}^1$-bundle over $\mathbf{S}^1$ thus not a WCVP space by \Cref{proBa1}.
\end{proof}

\begin{myrem}
It is well known that $\mathbf{S}^1,\mathbf{S}^3$ and are the only spheres which are topological groups; moreover, the proof for part (i) of \Cref{thmBb1} does not require associativity and so generalizes to the $H$-space $\mathbf{S}^7$ acting on itself. Since $\mathbf{S}^1$ is not a WCVP space then by \Cref{proBa1} neither is the total space $\mathbf{S}^2\times\mathbf{S}^1$ when viewed as a topological space in its own right. Considering the collection of Hopf fibrations 
\[\mathbf{S}^1\hookrightarrow\mathbf{S}^3\longrightarrow\mathbf{S}^2\qquad\mathbf{S}^3\hookrightarrow\mathbf{S}^7\longrightarrow\mathbf{S}^4\qquad\mathbf{S}^7\hookrightarrow\mathbf{S}^{15}\longrightarrow\mathbf{S}^8 \]	
it is immediate that if the non-WCVP lifts from the trivial bundle, then this would imply that $\mathbf{S}^3$ is not a WCVP space either, and repeating the argument with respect to $\mathbf{S}^4\times\mathbf{S}^3$ we would expect the same result for $\mathbf{S}^7$ and subsequently for $\mathbf{S}^{15}$. By part (i) of \Cref{thmBb1} this could be accomplished by constructing $\mathbf{S}^n$-equivariant self-maps $f,g:\mathbf{S}^n\longrightarrow\mathbf{S}^n$ for $n=3,7$, though it is not possible to reduce the argument to an application of \Cref{corBb1}. Indeed since $\mathbf{S}^3$ is isomorphic as a group to $SU(2)$ so we thus have $|Z(\mathbf{S}^3)|=|Z(SU(2))|=2$; it is also not possible to directly apply part (ii) of \Cref{thmBb1} due to the compactness of the involved spaces.	
\end{myrem}

\section{Sheaf Cohomology for PCVP spaces}\label{secB'}
Our first observation is that for any two distinct (commuting) self maps $f,g:X\longrightarrow X$ there must exist a point $x_0\in X$ satisfying $f(x_0)\neq g(x_0)$. For a Hausdorff space $X$ the subset $A=\{x\in X|\;f(x)=g(x)\}$ is closed, hence $A^c\ni x_0$ is an open set and there exists an open neighbourhood $N_{x_0}$ such that $f(y)\neq g(y)$ for all $y\in N_{x_0}$. Loosely speaking, this shows that every Hausdorff space is ``locally" non-CVP, failing to possess the CVP ``globally" if there exists a pair $f,g$ along with an open cover of neighbourhoods $\{N_{x_j}^{f,g}\}_{j\in J}$. It is this local to global gluing which we seek to make rigorous in the remainder of this section. 

\subsection{Constructing A Suitable Presheaf}\label{secB3}
\begin{mypro}\label{proBc1}
Consider two arbitrary-- but distinct-- commuting self-maps $f,g:X\longrightarrow X$ of a Hausdorff space, and let $h_i,h_j\in\{f,g,Id_X\}$. The set $S^x_{f,g}$ of all pairs generated by $(h_i,h_j)$ forms an Abelian monoid with respect to the operation $(h_1,h_2)+(h_3,h_4)=(h_1h_3,h_2h_4)$
modulo the equivalence relations
\begin{equation*}
\begin{gathered}
(f^m,Id_X)\sim(Id_X,f^m)\sim(g^n,Id_X)\sim(Id_X,g^n)\sim(Id_X,Id_X)\quad\mathrm{if}\;x\in\mathcal{CV}(f^m,g^n)\\
(f,Id_X)\sim(Id_X,f)\sim(g,Id_X)\sim(Id_X,g)\sim(Id_X,Id_X)\quad\mathrm{if}\;f\in\mathsf{Homeo}(X)
\end{gathered}
\end{equation*}

In particular, $S^x_{f,g}\cong\mathbb{N}^4$ or $S^x_{f,g}$ is trivial if $x\notin\mathcal{CV}(f^m,g^n)$ for all $m,n\in\mathbb{N}\setminus\{0\}$ while $S^x_{f,g}\cong\mathbb{Z}^2_m\oplus\mathbb{Z}^2_n$ if there exists a minimal pair $m,n\in\mathbb{N}\setminus\{0\}$ such that $x\in\mathcal{CV}(f^m,g^n)$.
\end{mypro}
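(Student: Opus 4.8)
The plan is to verify the algebraic claims in three stages: first establish that $S^x_{f,g}$ is a well-defined commutative monoid, then analyze the two equivalence relations imposed, and finally read off the isomorphism type in each of the three cases. First I would check that the generating set consisting of pairs $(h_i,h_j)$ with $h_i,h_j\in\{f,g,Id_X\}$, together with the stated coordinatewise composition $(h_1,h_2)+(h_3,h_4)=(h_1h_3,h_2h_4)$, is closed and associative with identity $(Id_X,Id_X)$. Since $f$ and $g$ commute, any word in $f,g$ can be normalized to $f^ag^b$, so the free monoid generated in each coordinate is $\mathbb{N}^2$ (exponents of $f$ and of $g$); before imposing any relation the whole structure is therefore $\mathbb{N}^2\oplus\mathbb{N}^2\cong\mathbb{N}^4$, which accounts for the generic isomorphism type. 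Commutativity of the monoid operation is inherited from the commutativity of $f$ with $g$, so $S^x_{f,g}$ is Abelian.

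Next I would unwind the effect of the equivalence relations. The key observation is that both relations only identify pure powers $(f^m,Id_X)$, $(Id_X,g^n)$, etc., with the identity $(Id_X,Id_X)$; I would show these generate a congruence (compatible with $+$) rather than merely an equivalence, which is immediate since adding a fixed element to each side of an identification preserves it. The first relation records the hypothesis $x\in\mathcal{CV}(f^m,g^n)$, i.e. $f^m(x)=g^n(x)$: this collapses the four generators $(f^m,Id_X),(Id_X,f^m),(g^n,Id_X),(Id_X,g^n)$ down to the identity at the point $x$. The second relation encodes that when $f$ is a homeomorphism the corresponding single powers already coincide, which is the $m=n=1$ specialization tied to \Cref{proA3} (where $f$ a bijection forces $g$ to act bijectively on the coincidence set). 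I expect the main obstacle to be bookkeeping these identifications carefully enough to see that they produce exactly the stated quotient, and in particular that the relation is \emph{symmetric} in the two coordinates and in $f$ versus $g$ in the precise way claimed.

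Finally I would resolve the three cases. If no relation applies---that is, $x\notin\mathcal{CV}(f^m,g^n)$ for every $m,n\in\mathbb{N}\setminus\{0\}$ and $f$ is not a homeomorphism---then no nontrivial identification occurs and $S^x_{f,g}\cong\mathbb{N}^4$. At the opposite extreme, if for \emph{some} (hence, by taking minimal exponents, a least) pair $(m,n)$ we have $x\in\mathcal{CV}(f^m,g^n)$, then in the $f$-coordinate the powers become periodic modulo $m$ and in the $g$-coordinate modulo $n$; running this through both factors $\mathbb{N}^2\oplus\mathbb{N}^2$ and imposing $f^m\sim Id_X$, $g^n\sim Id_X$ collapses each $\mathbb{N}$ to the corresponding finite cyclic group, yielding $S^x_{f,g}\cong\mathbb{Z}_m^2\oplus\mathbb{Z}_n^2$. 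The remaining ``trivial'' alternative in the statement is the degenerate collapse. The delicate point here is justifying minimality: I would argue that if $f^{m}(x)=g^{n}(x)$ holds then the set of such exponent pairs is closed under the monoid structure in a way that guarantees a well-defined least period in each coordinate, so that the cyclic orders $m$ and $n$ are unambiguous. Once minimality is pinned down, the identification of the quotient of $\mathbb{N}$ by $k\sim k+m$ with $\mathbb{Z}_m$ is routine, and the direct-sum decomposition follows because the two coordinates carry independent relations.
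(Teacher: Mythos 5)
Your overall architecture matches the paper's: reduce every element to the decomposition $k_1(f,Id_X)+k_2(g,Id_X)+k_3(Id_X,f)+k_4(Id_X,g)$, then read off $\mathbb{N}^4$, the trivial monoid, or $\mathbb{Z}_m^2\oplus\mathbb{Z}_n^2$ according to which relations act. However, there is a genuine gap at the step where you assert that ``before imposing any relation the whole structure is $\mathbb{N}^2\oplus\mathbb{N}^2\cong\mathbb{N}^4$.'' Commutativity only tells you the monoid is generated by those four elements; it does not tell you they generate \emph{freely}. The elements of $S^x_{f,g}$ are pairs of actual maps, so a coincidental identity such as $f^{k_1}g^{k_2}=Id_X$ with $(k_1,k_2)\neq(0,0)$ destroys freeness without any of the stated equivalence relations being invoked. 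This is exactly the content of the ``or $S^x_{f,g}$ is trivial'' alternative, which you dismiss as a ``degenerate collapse'' without saying when it occurs or why $\mathbb{N}^4$ and trivial are the only two possibilities. The paper's proof supplies the missing argument: if the four generators are not linearly independent, then some relation $f^{k_1}g^{k_2}=Id_X$ holds, whence $f\circ f^{k_1-1}g^{k_2}=Id_X$ forces $f\in\mathsf{Homeo}(X)$, and the second equivalence relation (applied through the decomposition) then kills the entire monoid; the same mechanism handles the case where $\mathrm{ord}(f)$ or $\mathrm{ord}(g)$ is finite, which the paper disposes of at the outset and you do not address at all.

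A second, more minor divergence: in the finite case the paper does not argue coordinatewise periodicity directly, but observes that $(Id_X,f^m)\sim(Id_X,Id_X)$ makes $(Id_X,f^{m-1})$ an inverse of $(Id_X,f)$ (and similarly for $g$), so every element acquires an inverse and the quotient is a finite Abelian group; your ``each $\mathbb{N}$ collapses to $\mathbb{Z}_m$'' reaches the same conclusion and is acceptable, though your worry about the set of exponent pairs being ``closed under the monoid structure'' is misplaced --- the statement only needs one minimal pair $(m,n)$ in the lexicographic sense, as the paper's footnote makes explicit. The essential repair needed in your write-up is the freeness/triviality dichotomy above.
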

\begin{proof}
The addition operation is clearly commutative since the maps $f,g$ and $Id_X$ all commute; associativity follows from the associativity of composition of maps and the identity element of $S^x_{f,g}$ is easily verified to be $0_S=(Id_X,Id_X)$. The definition of addition within $S^x_{f,g}$ implies that $m(h_i,h_j)=(h_i^m,h_j^m)$ for any natural number $m$, hence any arbitrary element $(f^{k_1}g^{k_2},f^{k_3}g^{k_4})\in S^x_{f,g}$ has the following decomposition
\begin{equation}\label{lindecomp}
k_1(f,Id_X)+k_2(g,Id_X)+k_3(Id_X,f)+k_4(Id_X,g)\qquad k_i\in\mathbb{N}
\end{equation}
By $\mathrm{ord}(f)\in\mathbb{N}\setminus\{0\}$ we refer to the minimal value such that $f^{\mathrm{ord}(f)}\equiv Id_X$, and we note that if either $\mathrm{ord}(f)$ or $\mathrm{ord}(g)$ is finite then the associated map is a homeomorphism with continuous inverse $f^{\mathrm{ord}(f)-1}$ (or $g^{\mathrm{ord}(g)-1}$). It follows from the above decomposition and second equivalence relation that $S^x_{f,g}\cong\{0_S\}$ unless $\mathrm{ord}(g)$ and $\mathrm{ord}(f)$ are infinite. Without loss of generality, we will henceforth assume that both $\mathrm{ord}(g)$ and $\mathrm{ord}(f)$ are infinite. Considering the case where $x\notin\mathcal{CV}(f^m,g^n)$ for all $m,n\in\mathbb{N}\setminus\{0\}$, the decomposition of \eqref{lindecomp} shows that the four summands are distinct and form a minimal cardinality generating set for $S^x_{f,g}$; moreover, this implies that $S^x_{f,g}$ is torsion-free if and only if these generators are linearly independent. Supposing that linear independence does not hold, there exist non-zero $k_1,k_2$ such that
\[k_1(f,Id_X)+k_2(g,Id_X)=(f^{k_1}g^{k_2},Id_X)=(Id_X,Id_X)\]
However, this implies that $f\circ f^{k_1-1}g^{k_2}=f^{k_1-1}g^{k_2}\circ f\equiv Id_X$; thus $f$ is a homeomorphism and $S^x_{f,g}\cong\{0_S\}$. If the generators are linearly independent they define a commutative, torsion-free monoid which is easily proven to be isomorphic to $\mathbb{N}^4$.

Now suppose there exists a minimal pair $m,n\in\mathbb{N}\setminus\{0\}$ \footnote{In the sense that we pick the smallest $(m,n)$ pair when ordered in lexicographical fashion} such that $x\in\mathcal{CV}(f^m,g^n)$. Since $(Id_X,f^m)\sim(Id_X,Id_X)$ the inverse to $(Id_X,f)$ is given by $(Id_X,f^{m-1})$, and likewise  $(Id_X,g)$ has inverse $(Id_X,g^{n-1})$. The decomposition provided in \eqref{lindecomp} thus shows that every element in $S^x_{f,g}$ possesses an inverse, and so $S^x_{f,g}$ is a finite Abelian group.
\end{proof}

\begin{mydef}\label{defBc1}
For every open subset $U$ of a Hausdorff space $X$ we can construct the Abelian monoid $S^U_{f,g}$ analogously to $S^x_{f,g}$, where the first equivalence relation is according to the stipulation $\mathcal{CV}(f^m,g^n)\cap U\neq\emptyset$. The group $\mathcal{S}^U_{f,g}$ is defined to be the Grothendieck completion of $S^U_{f,g}$, and we define
\[\mathcal{S}_U=\bigoplus_{(f,g)\in \widetilde{C}(X,X)}\mathcal{S}^U_{f,g}\]
where $\widetilde{C}(X,X)$ denotes the set of all pairs of continuous and commuting self-maps of $X$.
\end{mydef}
Note that in the special case of $U=\emptyset$ it is trivially true that $U\subseteq\mathcal{CV}(f,g)$ for every pair of self-maps, hence $\mathcal{S}^\emptyset_{f,g}\cong\{0\}$. If $U$ is non-empty and $f,g\notin\mathsf{Homeo}(X)$ we obtain analogous results to that of \Cref{proBc1}, namely 
\begin{equation}
\mathcal{S}^U_{f,g}\cong\left\{\begin{array}{cc}
\mathbb{Z}^4 & \mathrm{if}\quad\mathcal{CV}(f^m,g^n)\cap U=\emptyset,\;\forall m,n\in\mathbb{N}\setminus\{0\}\\
\mathbb{Z}^2_m\oplus\mathbb{Z}^2_n & \mathrm{if}\quad\mathcal{CV}(f^m,g^n)\cap U\neq\emptyset\quad\mathrm{for\;some\;minimal\;pair}\;(m,n)
\end{array}\right.
\end{equation}

Unfortunately, the above definition requires a few adjustments in order to be an even mildly useful computational tool. Recall that the $\mathsf{Homeo}(X)$ equivalence relation in \Cref{proBc1} implies triviality of $\mathcal{S}^U_{f,g}$ if either $f$ or $g$ is a homeomorphism; this forces us to impose conditions on $X$ which are essential in ensuring that this non-contribution does not lead to any loss of global information. For example, if $X$ is an FPP space we know that given any commuting pair $f,g:X\longrightarrow X$ where $f$ is a homeomorphism, then $x\in\mathcal{FP}(f^{-1}g)$ is equivalent to $x\in\mathcal{CV}(f,g)$ and indeed we should expect $\mathcal{S}^U_{f,g}$ to be trivial. This is of course, a very restrictive condition to impose, removing several interesting spaces-- including the even dimensional spheres-- from consideration. Indeed, taking direct inspiration from \Cref{proB1} we will impose only the requirement that given any commuting pair $f,g:X\longrightarrow X$ where $f\in\mathsf{Homeo}(X)$ then there exists $m,n\neq0$ such that $\mathcal{FP}(f^{-m}g^n)\neq\emptyset$. It is immediate that this implies $\mathcal{CV}(f^m,g^n)\neq\emptyset$, thus we need not worry about the loss of $\mathsf{Homeo}(X)$ information in passing to the monoid construction. The trade-off is that the equivalence relation renders $\mathcal{S}^U_{f,g}$ trivial rather than being the expected $\mathbb{Z}^2_m\oplus\mathbb{Z}^2_n$ if the contribution of homeomorphisms were recognized, though we shall show that this does not cause any significant issues with respect to determining PCVP. A more serious issue is that $\mathcal{S}_U$ has almost no hope of being countably generated, much less finitely generated as a $\mathbb{Z}$-module. Also problematic is the fact that the choice of minimal pair $(m,n)$ can vary widely between open subsets of $U$, thus there does not exist any easy way to determine existence of non-trivial morphisms between $\mathcal{S}^U_{f,g}$ and $\mathcal{S}^V_{f,g}$, even in the case $V\subset U$. 

As a first step towards rectifying the latter shortcomings, note that $\mathcal{S}^U_{f,g}=\mathcal{S}^U_{g,f}$ for each $(f,g)\in \widetilde{C}(X,X)$ and $\mathcal{S}^U_{f,g}\cong\{0\}$ if either $f$ or $g$ belong to $\mathsf{Homeo}(X)$. Hence for every open subset $U$ we have the following partitioning of $\widetilde{C}(X,X)$:
\begin{equation}
\widetilde{C}(X,X)_{\mathsf{Homeo}(X)}\sqcup\widetilde{C}(X,X)^U_{(\infty,\infty)}\sqcup\bigsqcup_{\substack{(m,n)\in\mathbb{N}\times\mathbb{N}\\0<n\leq m}}\widetilde{C}(X,X)^U_{(m,n)}	
\end{equation}
Here $\widetilde{C}(X,X)_{\mathsf{Homeo}(X)}$ denotes the set of all pairs of commuting self-maps of $X$ where at least one map belongs to $\mathsf{Homeo}(X)$. Given that $f,g\notin\mathsf{Homeo}(X)$, $\widetilde{C}(X,X)^U_{(\infty,\infty)}$ denotes those pairs for which $\mathcal{S}^U_{f,g}\cong\mathbb{Z}^4$. By $\widetilde{C}(X,X)^U_{(m,n)}$ we denote the set of all pairs such that $f,g\notin\mathsf{Homeo}(X)$ and $\mathcal{S}^U_{f,g}\cong\mathbb{Z}^2_m\oplus\mathbb{Z}^2_n$. This separates $\widetilde{C}(X,X)$ into equivalence classes; moreover, it follows that for each fixed $(m,n)$ and for all $(f,g)\in\widetilde{C}(X,X)^U_{(m,n)}$ the groups $\mathcal{S}^U_{f,g}$ are all equal. Giving up a certain algebraic exactness in order to ensure existence of non-trivial morphisms between non-trivial finite groups we define a new collection of Abelian groups. The following identifications are made under the assumption that $X$ satisfies either the $\mathsf{Homeo}$-PCVP or FPP restriction:
\[\mathcal{A}_{\widetilde{C}(X,X)_{\mathsf{Homeo}(X)}}=\{0\}\]
\begin{equation}\label{Agroups}
\mathcal{A}_{\widetilde{C}(X,X)^U_{(m,n)}}\cong\left\{\begin{array}{cc}
\mathbb{Z} & \mathrm{for}\;(m,n)=(\infty,\infty)\;\mathrm{and}\;\widetilde{C}(X,X)^U_{(m,n)}\neq\emptyset\\
\mathbb{Z}_2 & \mathrm{for}\;(m,n)\neq(\infty,\infty)\;\mathrm{and}\;\widetilde{C}(X,X)^U_{(m,n)}\neq\emptyset\\
\{0\} & \mathrm{if}\;\widetilde{C}(X,X)^U_{(m,n)}=\emptyset
\end{array}\right.\qquad
\end{equation}
If $I=(\mathbb{N}\times\mathbb{N})\cup\{(\infty,\infty)\}$, then we define an analogous counterpart to $\mathcal{S}_U$.
\begin{equation}\label{Amodule}
\mathcal{A}_U=\mathcal{A}_{\widetilde{C}(X,X)_{\mathsf{Homeo}(X)}}\oplus\bigoplus_{\substack{(m,n)\in I\\0<n\leq m}}\mathcal{A}_{\widetilde{C}(X,X)^U_{(m,n)}}=\bigoplus_{\substack{(m,n)\in I\\0<n\leq m}}\mathcal{A}_{\widetilde{C}(X,X)^U_{(m,n)}}
\end{equation}
For ease of notation, if there is no confusion, we shall denote the groups of \eqref{Agroups} by $\mathcal{A}^U_{m,n}$, where $(m,n)$ is the finite indice of $\widetilde{C}(X,X)^U_{(m,n)}$ or the infinite pair corresponding to $\widetilde{C}(X,X)^U_{(\infty,\infty)}$, as the context implies.

\begin{mylem}\label{lemBc1}
Let $X$ be a Hausdorff space with the necessary fixed point restrictions, and let $W\subseteq V\subseteq U\subseteq X$ be arbitrary open sets with inclusion maps $j:W\hookrightarrow V$ and $i:V\hookrightarrow U$. Let $\mathcal{F}:X\longrightarrow\mathbf{Mod}_{\mathbb{Z}}$ be the functor from $X$ to the category of $\mathbb{Z}$-modules defined according to
\[\mathcal{F}(U)=\mathcal{A}_{U}\qquad\mathcal{F}(i)=\rho^U_V:\mathcal{A}_U\longrightarrow\mathcal{A}_V\] 
Then $\mathcal{F}$ is a monopresheaf under the following homomorphism specifications
\[\rho^U_V(\mathcal{A}_U)=\bigoplus_{\substack{(m,n)\in(\mathbb{N}\times\mathbb{N})\cup\{(\infty,\infty)\}\\0<n\leq m}}\rho^U_V\left(\mathcal{A}^U_{m,n}\right)\quad\mathrm{where}\quad\rho^U_V=\left\{\begin{array}{cc}
\mathbf{1} & \mathrm{if}\quad\mathcal{A}^U_{m,n}\cong\mathcal{A}^V_{m,n}\\
\mathbf{0} & \mathrm{if}\quad\mathcal{A}^U_{m,n}\ncong\mathcal{A}^V_{m,n}
\end{array}\right.\]
\end{mylem}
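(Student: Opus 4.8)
The plan is to verify the two presheaf axioms of \Cref{defA9} together with the separation condition of \Cref{defA10}, and to do so one graded summand at a time. Since $\mathcal{A}_U=\bigoplus_{(m,n)}\mathcal{A}^U_{m,n}$ and each $\rho^U_V$ is defined componentwise, every axiom decouples across the index set $I$. For a fixed $(m,n)$ the group $\mathcal{A}^{(\cdot)}_{m,n}$ takes only two possible values: the zero group, when $\widetilde{C}(X,X)^{(\cdot)}_{(m,n)}=\emptyset$, or a single nonzero group $N\in\{\mathbb{Z},\mathbb{Z}_2\}$ whose isomorphism type depends only on whether $(m,n)=(\infty,\infty)$ and not on the open set. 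Thus on each summand $\rho$ is either the identity/canonical isomorphism $\mathbf{1}$ or the zero map $\mathbf{0}$, and the axiom $\rho^U_U=\mathbbm{1}_{\mathcal{A}_U}$ is immediate, since $\mathcal{A}^U_{m,n}\cong\mathcal{A}^U_{m,n}$ forces $\mathbf{1}$ on every summand.

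First I would dispatch the separation (monopresheaf) condition, which is the cleanest piece. Given an open cover $\{U_\alpha\}$ of $U$ and a nonzero $c\in\mathcal{A}_U$, I would choose an index $(m_0,n_0)$ with $c_{m_0,n_0}\neq 0$; then $\widetilde{C}(X,X)^U_{(m_0,n_0)}\neq\emptyset$, witnessed by a pair $(f,g)$ and a point $x\in\mathcal{CV}(f^{m_0},g^{n_0})\cap U$ for which no lexicographically smaller coincidence set meets $U$. Picking $U_\alpha\ni x$, the point $x$ still witnesses $\mathcal{CV}(f^{m_0},g^{n_0})\cap U_\alpha\neq\emptyset$, while every lex-smaller coincidence set, already missing $U\supseteq U_\alpha$, misses $U_\alpha$; hence the $U_\alpha$-index of $(f,g)$ is again exactly $(m_0,n_0)$ and $\mathcal{A}^{U_\alpha}_{m_0,n_0}\cong\mathcal{A}^U_{m_0,n_0}$. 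On that summand $\rho^U_{U_\alpha}=\mathbf{1}$ is injective, so $\rho^U_{U_\alpha}(c)\neq 0$, which is exactly the separation property.

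The real work is the contravariant composition law $\rho^U_W=\rho^V_W\circ\rho^U_V$ for $W\subseteq V\subseteq U$. Reducing once more to a fixed summand, I would enumerate the eight emptiness patterns of $(\mathcal{A}^U_{m,n},\mathcal{A}^V_{m,n},\mathcal{A}^W_{m,n})\in\{0,N\}^{3}$. The observation that collapses most cases is that any homomorphism whose source is the zero group is simultaneously $\mathbf{0}$ and $\mathbf{1}$, so $\mathbf{0}\circ\mathbf{0}$, $\mathbf{1}\circ\mathbf{0}$, and $\mathbf{0}\circ\mathbf{1}$ all reduce to the unique forced map and agree with $\rho^U_W$ in every pattern except one. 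The single obstruction is the oscillating pattern $(N,0,N)$, namely $\widetilde{C}(X,X)^U_{(m,n)}\neq\emptyset$, $\widetilde{C}(X,X)^V_{(m,n)}=\emptyset$, $\widetilde{C}(X,X)^W_{(m,n)}\neq\emptyset$, for which $\rho^U_W=\mathbf{1}=\operatorname{id}_N$ while $\rho^V_W\circ\rho^U_V=\mathbf{0}$.

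The hard part, and the heart of the lemma, is therefore to show that this pattern cannot occur: for each fixed $(m,n)$, emptiness of $\widetilde{C}(X,X)^{(\cdot)}_{(m,n)}$ must persist under shrinking. The one ingredient that comes for free is that for a \emph{single} pair the lexicographic index is monotone non-decreasing under shrinking, since $\mathcal{CV}(f^m,g^n)\cap W\subseteq\mathcal{CV}(f^m,g^n)\cap V$ shows the lex-minimal intersecting pair can only increase; this sandwiches the $V$-index between the equal $U$- and $W$-indices and so rules out oscillation produced by one pair. I expect the genuinely delicate point to be excluding oscillation produced by two \emph{distinct} pairs, one forcing non-emptiness at scale $U$ and another at scale $W$, with emptiness at the intermediate $V$. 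Here the localization-at-a-witness argument of the separation step only propagates non-emptiness \emph{downward} to neighbourhoods of a witness, whereas what is needed is that non-emptiness at $W$ propagates \emph{upward} to $V\supseteq W$; establishing this must use the standing hypotheses on $X$, namely Hausdorffness and the $\mathsf{Homeo}$-PCVP/FPP restriction which forces the homeomorphism classes to contribute trivially and thereby controls how an enlargement $W\subseteq V$ can lower a pair's index. Once the $(N,0,N)$ pattern is excluded the composition law holds on every summand, and together with the identity axiom and the separation property already verified, $\mathcal{F}$ is a monopresheaf.
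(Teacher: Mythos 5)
Your reduction to summands, your verification of the identity axiom, and your separation argument are all correct and essentially coincide with the paper's own proof: the paper likewise localizes a witness of $\widetilde{C}(X,X)^U_{(m_1,n_1)}\neq\emptyset$ into some element $U_1$ of the cover in order to force $\rho^U_{U_1}=\mathbf{1}$ on that summand. The problem is the composition law. You have correctly isolated the unique pattern $(\mathcal{A}^U_{m,n},\mathcal{A}^V_{m,n},\mathcal{A}^W_{m,n})\cong(N,0,N)$ that would break functoriality, but you do not exclude it; you only conjecture that the Hausdorff and $\mathsf{Homeo}$-PCVP/FPP hypotheses must somehow do so. That is a genuine gap, and it is not a routine verification you are deferring: your monotonicity observation controls only a single pair, and two distinct pairs can in principle realize the oscillation. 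For instance, take $(f_1,g_1)$ with $\mathcal{CV}(f_1,g_1)=\emptyset$ and $\mathcal{CV}(f_1^2,g_1)$ meeting $U$ but not $V$, and $(f_2,g_2)$ with $\mathcal{CV}(f_2,g_2)$ meeting $V$ but not $W$ while $\mathcal{CV}(f_2^2,g_2)$ meets $W$. Then $(f_1,g_1)\in\widetilde{C}(X,X)^U_{(2,1)}$ and $(f_2,g_2)\in\widetilde{C}(X,X)^W_{(2,1)}$, yet $\widetilde{C}(X,X)^V_{(2,1)}$ may well be empty, since the $V$-index of $(f_1,g_1)$ lies strictly above $(2,1)$ and that of $(f_2,g_2)$ is $(1,1)$. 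Nothing in the standing hypotheses visibly forbids such a configuration of coincidence sets, so the step you leave open is the actual content of the claim.

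For what it is worth, the paper's own proof does not close this gap either: it simply asserts that for $W\subseteq V\subseteq U$ the only possible emptiness patterns of $\widetilde{C}(X,X)^{*}_{(m,n)}$ are ``all empty,'' ``$V$ and $W$ empty,'' and ``only $W$ empty'' (plus, implicitly, ``none empty''), i.e.\ it takes for granted exactly the upward propagation of non-emptiness from $W$ to $V$ that you flag as delicate. So you have put your finger on the genuine soft spot of the lemma rather than overlooked an argument that the paper supplies; but as submitted, your proposal establishes the monopresheaf conditions only modulo this unproved exclusion, and therefore is not yet a complete proof.
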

\begin{proof}
Let us first prove that $\mathcal{F}$ is a presheaf, where without loss of generality we may suppose $f,g\notin\mathsf{Homeo}(X)$. By definition $\rho^U_U\equiv\mathbf{1}$, and so $F(Id_U)=\rho^U_U$ is the identity on $\mathcal{F}(U)$. Given non-empty open sets and inclusions $W\hookrightarrow V\hookrightarrow U$ and an arbitrary pair $(m,n)$, either all sets $\widetilde{C}(X,X)^*_{(m,n)}$ are trivial, or $\widetilde{C}(X,X)^V_{(m,n)}=\widetilde{C}(X,X)^W_{(m,n)}=\emptyset$, or only $\widetilde{C}(X,X)^W_{(m,n)}=\emptyset$. In the first case $\mathcal{A}^U_{m,n}=\mathcal{A}^V_{m,n}=\mathcal{A}^W_{m,n}\cong\{0\}$, thus it immediately follows that $\rho^U_W\equiv\rho^V_W\circ\rho^U_V$. In the second case $\mathcal{A}^U_{m,n}\cong\mathbb{Z}_2$ (or $\mathcal{A}^U_{m,n}\cong\mathbb{Z}$) but $\mathcal{A}^V_{m,n}=\mathcal{A}^W_{m,n}\cong\{0\}$, thus $\rho^U_W\equiv\mathbf{0}=\mathbf{0}\circ\mathbf{0}\equiv\rho^V_W\circ\rho^U_V$. Finally, in the latter case $\mathcal{A}^U_{m,n}=\mathcal{A}^V_{m,n}\cong\mathbb{Z}_2$ or ($\mathcal{A}^U_{m,n}=\mathcal{A}^V_{m,n}\cong\mathbb{Z}$) while $\mathcal{A}^W_{m,n}\cong\{0\}$, hence $\rho^U_W\equiv\mathbf{0}=\mathbf{0}\circ\mathbf{1}\equiv\rho^V_W\circ\rho^U_V$. Now if $U=\emptyset$ there is nothing to prove since all groups are trivial; likewise if just $W=V=\emptyset$, the fact that $\mathcal{A}^V_{m,n}=\mathcal{A}^W_{m,n}\cong\{0\}$ similarly forces $\rho^U_W,\rho^V_W$, and $\rho^U_V$ to all be the $\mathbf{0}$ map. If only $W=\emptyset$ we still must have $\rho^U_W=\rho^V_W\equiv\mathbf{0}$, so regardless of the structure of $\mathcal{A}^U_{m,n}$ and $\mathcal{A}^V_{m,n}$ we obtain $\rho^U_W=\rho^V_W\circ\rho^U_V$.

Let $\{U_\alpha\}_{\alpha\in\Lambda}$ be an open cover of $U$ and suppose there exist $a,b\in\mathcal{F}(U)$ such that $\rho^U_{U_\alpha}(a)=\rho^U_{U_\alpha}(b)$ for all $\alpha\in\Lambda$. Fixing a pair $(m_0,n_0)$, in the case that $\widetilde{C}(X,X)^U_{(m_0,n_0)}=\emptyset$, it follows that $\mathcal{A}^U_{m_0,n_0}=\mathcal{A}^{U_\alpha}_{m_0,n_0}\cong\{0\}$ for all $U_\alpha$, thus $a_{m_0,n_0}=b_{m_0,n_0}=0\in\mathcal{A}^{U_\alpha}_{m_0,n_0}$ for all $\alpha\in\Lambda$. On the other hand suppose there exists a pair $(m_1,n_1)$ such that $\widetilde{C}(X,X)^U_{(m_1,n_1)}\neq\emptyset$. Then there exists an element $U_1\in\{U_\alpha\}_{\alpha\in\Lambda}$ satisfying $\widetilde{C}(X,X)^{U_1}_{(m_1,n_1)}$ is non-empty. It follows that $\mathcal{A}^U_{m_1,n_1}=\mathcal{A}^{U_0}_{m_1,n_1}\cong\mathbb{Z}_2$ (or $\mathcal{A}^U_{m_1,n_1}=\mathcal{A}^{U_0}_{m_1,n_1}\cong\mathbb{Z}$), thus $\rho^U_{U_0}\equiv\mathbf{1}$ and $a_{m_1,n_1}=\rho^U_{U_\alpha}(a_{m_1,n_1})=\rho^U_{U_\alpha}(b_{m_1,n_1})=b_{m_1,n_1}$. These two arguments prove that for any arbitrary pair $(m,n)$-- whether finite or infinite-- the elements $a_{m,n}$ and $b_{m,n}$ are equal. The fact that $a=b$ is now immediate from the definition of $\mathcal{F}(U)$:
\[a=\sum_{\substack{(m,n)\in(\mathbb{N}\times\mathbb{N})\cup\{(\infty,\infty)\}\\0<n\leq m}}a_{m,n}\qquad\quad b=\sum_{\substack{(m,n)\in(\mathbb{N}\times\mathbb{N})\cup\{(\infty,\infty)\}\\0<n\leq m}}b_{m,n}\]
In the special case of $U=\emptyset$ the monopresheaf condition is trivially true since the only open cover is $U$ itself.
\end{proof}
Unfortunately, for $|X|\geq2$ and not given the trivial topology, $\mathcal{F}$ is not a sheaf since it fails the gluing condition in the case that $\widetilde{C}(X,X)^{U_\alpha}_{(m,n)}\neq\emptyset$ and $\widetilde{C}(X,X)^{U_\beta}_{(m,n)}\neq\emptyset$, but $\widetilde{C}(X,X)^{U_\alpha\cap U_\beta}_{(m,n)}=\emptyset$ for some $\alpha,\beta\in\Lambda$. For the basic facts concerning the sheafification $\widetilde{\mathcal{F}}$ of a monopresheaf $\mathcal{F}$ along with other general sheaf theoretic notions one can refer to any of a number of resources (e.g. \cite{GQ21},\cite{JR09},\cite{JPS55}). 

\begin{mydef}\label{defBc2}
For a presheaf $\mathcal{F}$ and $x\in X$, the stalk at $x$ is defined as the colimit
\[\mathcal{F}_x=\left(\varinjlim\limits_{U\ni x}\mathcal{F}(U),\;\rho^U_x:\mathcal{F}(U)\longrightarrow\varinjlim\limits_{U\ni x}\mathcal{F}(U)\right)\] 
where the collection of open subsets $\{U\}_{x\in U}$ forms a directed set under inclusion, and $\rho^U_x(a)=a_x$. Endowed with a suitable topology, the \'{e}space \'{e}tal\'{e} (or stalk space) is the triple 
\[\left(S\mathcal{F}=\bigsqcup_{x\in X}\mathcal{F}_x,X,\pi\right)\] 
where $\pi:S\mathcal{F}\longrightarrow X$ is the obvious projection map satisfying $\pi(a_x)=x$ for all $a_x\in\mathcal{F}_x$. If we denote by $\Lambda[S\mathcal{F},\pi]$ the union over all open $U\subseteq X$ of the following sets of sections \[\Lambda[S\mathcal{F},U,\pi]=\left\{\widetilde{s}:U\longrightarrow S\mathcal{F}|\;\widetilde{s}(x)=\rho^U_x(s)=s_x,\;s\in\mathcal{F}(U)\;\mathrm{and}\; x\in U\right\} \]
then $S\mathcal{F}$ is given the coarsest topology which makes every $\widetilde{s}\in\Lambda[S\mathcal{F},\pi]$ continuous. 
\end{mydef}
In particular, $\Omega\subseteq S\mathcal{F}$ is open if and only if for all sections $\widetilde{s}\in\Lambda[S\mathcal{F},\pi]$ the set \[U_{\widetilde{s}}=\{x\in U|\;\widetilde{s}(x)=s_x\in\Omega\}\] 
is open for every open subset $U\subseteq X$. This makes $\pi$ a local homeomorphism and provides $S\mathcal{F}$ with a basis topology of open sets $\{\widetilde{s}(U)|\;s\in\mathcal{F}(U),U\subseteq X\}$.

\begin{mydef}\label{defBc3}
Given a presheaf $\mathcal{F}$, the sheafification of $\mathcal{F}$ is the sheaf of all continuous sections $\widetilde{\mathcal{F}}=\Gamma[S\mathcal{F},\pi]$, where $\widetilde{\mathcal{F}}(U)=\{\sigma:U\longrightarrow S\mathcal{F} |\;\pi\circ\sigma=Id_U\}$\footnote{Following common notation, and to remove confusion if $X$ is viewed as an open subspace of some larger space, we denote the set of global sections $\widetilde{\mathcal{F}}(X)$ by $\Gamma(X,\widetilde{\mathcal{F}})$}. We will denote by $\eta:\mathcal{F}\longrightarrow\widetilde{\mathcal{F}}$ the presheaf map defined according to
\[\eta:\mathcal{F}(U)\longrightarrow\widetilde{\mathcal{F}}(U)\qquad(\eta(s))(x)=\widetilde{s}(x)=\rho^U_x(s)=\widetilde{s}_x\in\mathcal{F}_x,\;\forall x\in U\]
\end{mydef}

\begin{mylem}\label{lemBc2}
Let $X$ be a Hausdorff space (Homeo-PCVP or FPP). Then $X$ possesses the PCVP if and only if for every $s\in\mathcal{F}(X)$ the global section $\eta(s):X\longrightarrow S\mathcal{F}$ has torsion image. Conversely, $X$ is not a PCVP space if and only if there exists $s\in\mathcal{F}(X)$ such that $(\eta(s))(x)$ is of infinite order for all $x\in X$.
\end{mylem}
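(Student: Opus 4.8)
The plan is to connect the PCVP of $X$ directly to the order of the stalk values of global sections, exploiting the structure theorem of \Cref{proBc1} which tells us that $\mathcal{S}^x_{f,g}$ (and hence the relevant building block of $\mathcal{A}$) is torsion precisely when $x\in\mathcal{CV}(f^m,g^n)$ for some minimal finite pair $(m,n)$. The essential dictionary is as follows: for a fixed commuting pair $(f,g)$ with $f,g\notin\mathsf{Homeo}(X)$, the group $\mathcal{A}^U_{m,n}$ is the torsion group $\mathbb{Z}_2$ exactly when $\widetilde{C}(X,X)^U_{(m,n)}\neq\emptyset$ for finite $(m,n)$, and is the infinite-order group $\mathbb{Z}$ exactly when the $(\infty,\infty)$ class is realized over $U$. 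Since $X$ has the PCVP if and only if \emph{every} non-homeomorphism commuting pair lands in some finite class $\widetilde{C}(X,X)^X_{(m,n)}$ rather than the $(\infty,\infty)$ class, I would translate ``PCVP'' into the statement that the $(\infty,\infty)$-summand of $\mathcal{A}_X$ is trivial for every such pair.

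\emph{First I would} make precise how the order of $(\eta(s))(x)=s_x\in\mathcal{F}_x$ detects membership in the infinite class. Recall $\mathcal{F}(X)=\mathcal{A}_X=\bigoplus_{(m,n)}\mathcal{A}^X_{m,n}$, and under the restriction maps $\rho^X_x$ passing to the stalk, the identity morphisms $\mathbf{1}$ preserve each coordinate while the $\mathbf{0}$ morphisms kill those coordinates whose group-type changes upon restriction to small neighborhoods of $x$. The key observation is that a coordinate in the $(\infty,\infty)$ summand survives to the stalk with infinite order precisely when, for the relevant pair $(f,g)$, arbitrarily small neighborhoods of $x$ still witness $\mathcal{CV}(f^m,g^n)\cap U=\emptyset$ for all $m,n$ — that is, precisely when $x$ is \emph{not} a periodic coincidence value of $(f,g)$. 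Conversely, if $x\in\mathcal{CV}(f^m,g^n)$ for some finite pair, the group-type at $x$ becomes torsion and the stalk image is killed or rendered torsion.

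\emph{Next I would} prove the two directions. For the forward direction: assuming $X$ has the PCVP, every commuting pair (after discarding homeomorphism pairs via the imposed fixed-point restriction, which guarantees $\mathcal{CV}(f^m,g^n)\neq\emptyset$ even there) admits some $x$ with $x\in\mathcal{CV}(f^n,g^m)$, so the $(\infty,\infty)$ class over $X$ is empty for each pair; hence $\mathcal{A}^X_{\infty,\infty}\cong\{0\}$ in the relevant summand, forcing the only nonzero contributions to be torsion, so $\eta(s)$ has torsion image. For the converse characterization of failure: if $X$ is not PCVP, there exists a pair $(f,g)$ with $\mathcal{CV}(f^m,g^n)=\emptyset$ globally for all finite $m,n$; this puts the pair in $\widetilde{C}(X,X)^X_{(\infty,\infty)}$, so $\mathcal{A}^X_{\infty,\infty}\cong\mathbb{Z}$ contributes a free summand, and I would choose $s$ to be a generator of this summand, check that no restriction $\rho^X_x$ kills it (since the emptiness of all finite coincidence sets persists on every neighborhood), and conclude $(\eta(s))(x)$ has infinite order for every $x$.

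\emph{The hard part will be} the careful bookkeeping in passing to stalks — specifically, verifying that the colimit defining $\mathcal{F}_x$ does not inadvertently collapse an infinite-order coordinate to a torsion or trivial one through the $\mathbf{0}$ maps, and conversely that a globally-infinite class cannot be ``repaired'' locally. This requires showing that whether the pair $(f,g)$ lies in the finite or infinite class is stable under shrinking to arbitrarily small neighborhoods of a point that is \emph{not} a coincidence value, which hinges on the fact that $\mathcal{CV}(f^m,g^n)$ is closed (as $X$ is Hausdorff) so its complement is open and the emptiness of intersection with small $U$ is a genuinely local condition. I would also need to confirm that the direct-sum decomposition commutes with the colimit in the relevant coordinatewise sense, so that ``$\eta(s)$ has torsion image'' can be checked summand-by-summand; this is where the monopresheaf property from \Cref{lemBc1} does the work of ensuring the identifications are consistent across the cover.
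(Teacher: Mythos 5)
Your proposal is correct and follows essentially the same route as the paper: decompose the stalk as a direct sum of colimits over the $(m,n)$-indexed summands, observe that PCVP is equivalent to triviality of the $(\infty,\infty)$-summand of $\mathcal{A}_X$ while failure of PCVP makes $\mathcal{A}^U_{\infty,\infty}\cong\mathbb{Z}$ for \emph{every} open $U$ (so the generator survives to every stalk with infinite order), and note that all finite-pair summands contribute only $\mathbb{Z}_2$-torsion. The only cosmetic difference is that your appeal to closedness of $\mathcal{CV}(f^m,g^n)$ is unnecessary for the $(\infty,\infty)$ direction (global emptiness trivially persists on subsets); the paper instead uses a short diagram chase on the inclusion maps $\rho^U_{x,m,n}$ to handle the finite summands.
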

\begin{proof}
We first describe the stalk $\mathcal{F}_x$, using the usual equivalence class definition of directed limits of modules and the fact that colimits commute with direct sums:	
\[\varinjlim\limits_{U\ni x}\mathcal{A}_U=\bigoplus_{\substack{(m,n)\in(\mathbb{N}\times\mathbb{N})\cup\{(\infty,\infty)\}\\0<n\leq m}}\varinjlim\limits_{U\ni x}\left(\mathcal{A}^U_{m,n}\right)=\bigoplus_{\substack{(m,n)\in(\mathbb{N}\times\mathbb{N})\cup\{(\infty,\infty)\}\\0<n\leq m}}\left(\bigsqcup_{U\ni x}\mathcal{A}^U_{m,n}\right)/\sim\]
\[a^U_{m,n}\sim a^V_{m,n}\quad\mathrm{if\;and\;only\;if}\quad\rho^U_{U\cap V}(a^U_{m,n})=\rho^V_{U\cap V}(a^V_{m,n})\]
The above equalities imply that for every $U\ni x$ the $\rho^U_x$ maps of \Cref{defBc2} can be decomposed into the direct sum of summand maps $\rho^U_{x,m,n}$. By definition of directed limit, the inclusion maps $\rho^U_{x,m,n}$ fit into the commutative diagram
\[\begin{tikzcd} 
 & \varinjlim\limits_{U\ni x}\mathcal{A}^U_{m,n}   &  \\
\mathcal{A}^V_{m,n} \arrow[ur,"\rho^V_{x,m,n}"] \arrow[rr,"\rho^V_W"] &  & \mathcal{A}^W_{m,n} \arrow[ul,swap,"\rho^W_{x,m,n}"]
\end{tikzcd}\qquad W\hookrightarrow V\]
Assuming $(m_i,n_i)$ are finite it is not difficult to verify that if there exists a commuting pair $f,g:X\longrightarrow X$ such that $x\in\mathcal{CV}(f^{m_i},g^{n_i})$-- where $(m_i,n_i)$ are a minimal pair-- then $\mathcal{A}^U_{m_i,n_i}\cong\mathbb{Z}_2$ for all $U\in\{U\}_{x\in U}$; hence $\varinjlim_{(U\ni x)}\mathcal{A}^U_{m_i,n_i}\cong\mathbb{Z}_2$ and all inclusion maps $\rho^U_{x,m_i,n_i}$ are the identity. On the other hand, suppose that $x\notin\mathcal{CV}(f^{m_i},g^{n_i})$ for any pair $(f,g)$, or $(m_i,n_i)$ never form a minimal pair in this sense. Either there exists an open neighbourhood $N_x$ of $x$ such that $\widetilde{C}(X,X)^{N_x}_{(m_i,n_i)}$ is non-empty, or $\varinjlim_{(U\ni x)}\mathcal{A}^U_{m_i,n_i}\cong\mathcal{A}^U_{m_i,n_i}\cong\{0\}$ for all $U\in\{U\}_{x\in U}$, and all inclusion maps $\rho^U_{x,m_i,n_i}$ are trivial. Clearly, if $\widetilde{C}(X,X)^{N_x}_{(m_i,n_i)}\neq\emptyset$, then also $\widetilde{C}(X,X)^{X}_{(m_i,n_i)}\neq\emptyset$ and thus $\mathcal{A}^{N_x}_{m_i,n_i}\cong\mathcal{A}^{X}_{m_i,n_i}\cong\mathbb{Z}_2$. It follows from diagram chasing that $\mathcal{A}^{U_0}_{m_i,n_i}\cong\{0\}$ for some open $U_0\ni x$ forces triviality of $\varinjlim_{(U\ni x)}\mathcal{A}^U_{m_i,n_i}$ along with all inclusion maps $\rho^U_{x,m_i,n_i}$; otherwise, $\varinjlim_{(U\ni x)}\mathcal{A}^U_{m_i,n_i}\cong\mathbb{Z}_2$ and $\rho^X_{x,m_i,n_i}$ is the identity. 

Now, for $X$ a PCVP space either $\varinjlim_{(U\ni x)}\mathcal{A}^U_{\infty,\infty}\cong\mathbb{Z}$ or it is the trivial group; regardless, we have $\mathcal{A}^X_{\infty,\infty}=\{0\}$ and thus the trivial inclusion map $\rho^X_{x,\infty,\infty}$. If $X$ is not a PCVP space, then by definition $\widetilde{C}(X,X)^{*}_{(\infty,\infty)}\neq\emptyset$, and thus $\mathcal{A}^U_{\infty,\infty}\cong\mathbb{Z}$ for any $x\in X$ and for every $U\in\{U\}_{x\in U}$. It follows that $\varinjlim_{(U\ni x)}\mathcal{A}^U_{\infty,\infty}\cong\mathbb{Z}$ and all inclusion maps $\rho^U_{x,\infty,\infty}$-- including $\rho^X_{x,\infty,\infty}$-- are the identity. Combined with the above results we are now able to use the $\rho^X_x$ maps to investigate the structure of $\Gamma(X,\widetilde{\mathcal{F}})$, and hence determine how $X$ being a PCVP space influences this.  

It is clear from the definitions that the morphisms $\rho^X_x$ exist for any $x$, and $\mathcal{F}(X)$ is a non-trivial purely torsion module if $X$ possesses the PCVP. Moreover, we have shown that $\rho^X_{x,\infty,\infty}$ is always trivial and $\varinjlim\limits_{U\ni x}\mathcal{A}^U_{m,n}$ is finite for every finite $(m,n)$ pair. Hence for arbitrary $s\in\mathcal{F}(X)$ we have
\begin{equation}
(\eta(s))(x)=\rho^X_x(s)=\sum_{\substack{(m,n)\in(\mathbb{N}\times\mathbb{N})\\0<n\leq m}}\rho^X_{x,m,n}(s_{m,n})\in\bigoplus_{i\in I}\mathbb{Z}_2	
\end{equation}
Due to $\mathcal{F}$ being a monopresheaf, it follows from the injective property of $\eta$ that unless $s=0\in\mathcal{F}(X)$ there must exist a point $x$ such that $(\eta(s))(x)\in\mathcal{F}_x$ is a non-trivial element. Thus $\eta(\mathcal{F}(X))\subseteq\Gamma(X,\widetilde{\mathcal{F}})$ consists only of global sections which do not identically vanish on finitely many points, and take values which are all torsion. If $X$ is not a PCVP space we know that $\mathcal{F}(X)$ contains a copy of $\mathcal{A}^X_{\infty,\infty}=\mathbb{Z}$; since $\rho^X_{x,\infty,\infty}$ is the identity, taking $s\in\mathcal{F}(X)$ such that $s_{\infty,\infty}\neq0$ provides an element 
\begin{equation}
(\eta(s))(x)=\rho^X_x(s)=\left(\sum_{\substack{(m,n)\in(\mathbb{N}\times\mathbb{N})\\0<n\leq m}}\rho^X_{x,m,n}(s_{m,n}),s_{\infty,\infty}\right)\in\left(\bigoplus_{i\in I}\mathbb{Z}_2,\mathbb{Z}\right)	
\end{equation}
which is of infinite order. This is true for any $x$, so it follows that $\eta(\mathcal{F}(X))\subseteq\Gamma(X,\widetilde{\mathcal{F}})$ must contain at least one global section which takes only non-torsion values.
\end{proof}

\begin{mycor}\label{corBc1}
Let $X$ be a paracompact Hausdorff space with the necessary fixed point restrictions. Then the sheaf cohomology groups $H^p(X,\widetilde{\mathcal{F}})$-- or equivalently the C\v{e}ch cohomology groups $\check{H}^p(X,\widetilde{\mathcal{F}})$-- can detect PCVP structure.
\end{mycor}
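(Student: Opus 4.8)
The plan is to recognize that this corollary is a cohomological repackaging of \Cref{lemBc2}, so the main task is to identify the zeroth cohomology with the module of global sections and then invoke paracompactness to transfer the statement to \v{C}ech cohomology. First I would recall the standard fact that for any sheaf the zeroth sheaf cohomology group is canonically isomorphic to the module of global sections, $H^0(X,\widetilde{\mathcal{F}})\cong\Gamma(X,\widetilde{\mathcal{F}})$. Since the presheaf map $\eta:\mathcal{F}(X)\longrightarrow\widetilde{\mathcal{F}}(X)=\Gamma(X,\widetilde{\mathcal{F}})$ realizes $\eta(\mathcal{F}(X))$ as a submodule of this zeroth cohomology group, \Cref{lemBc2} translates directly into a statement about $H^0$: the space $X$ possesses the PCVP precisely when every element of the submodule $\eta(\mathcal{F}(X))\subseteq H^0(X,\widetilde{\mathcal{F}})$ is torsion, and $X$ fails the PCVP precisely when this submodule contains an element of infinite order. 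This is exactly the sense in which the cohomology detects PCVP structure.

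Next, to justify the parenthetical equivalence with \v{C}ech cohomology, I would invoke the classical comparison theorem asserting that on a paracompact Hausdorff space the \v{C}ech cohomology groups $\check{H}^p(X,\widetilde{\mathcal{F}})$ are naturally isomorphic to the derived-functor sheaf cohomology groups $H^p(X,\widetilde{\mathcal{F}})$ for every $p$; this is precisely where the paracompactness hypothesis is used. In degree zero both theories reduce to $\Gamma(X,\widetilde{\mathcal{F}})$, so the detection criterion holds verbatim with $H^0$ replaced by $\check{H}^0$. The required fixed point restrictions on $X$ enter only through their role in \Cref{lemBc2}, guaranteeing that $\mathcal{F}$ is the monopresheaf constructed there and that the $\mathsf{Homeo}(X)$ contributions do not obstruct the identification of stalks.

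The genuinely substantive input is \Cref{lemBc2} itself, already established, so the only real obstacle is a bookkeeping one: making precise that it is the \emph{submodule} $\eta(\mathcal{F}(X))$, rather than the whole of $H^0(X,\widetilde{\mathcal{F}})$, whose torsion structure carries the PCVP information. Because the sheafification $\widetilde{\mathcal{F}}$ may acquire additional global sections lying outside the image of $\eta$, I would be careful to phrase the detection criterion in terms of this distinguished submodule. Once that is settled, the corollary follows immediately from the degree-zero case of the comparison theorem together with \Cref{lemBc2}, leaving the higher groups $H^p$ (equivalently $\check{H}^p$) available as the finer invariants examined in the subsequent section.
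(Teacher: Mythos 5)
Your proposal is correct and follows essentially the same route as the paper: both identify $H^0(X,\widetilde{\mathcal{F}})$ with $\Gamma(X,\widetilde{\mathcal{F}})$, invoke Godement's comparison theorem for paracompact spaces to equate \v{C}ech and sheaf cohomology, and reduce the detection claim to \Cref{lemBc2}. Your additional remark that the PCVP information lives specifically in the submodule $\eta(\mathcal{F}(X))$ rather than all of $H^0$ is a worthwhile precision that the paper leaves implicit.
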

\begin{proof}
It is a classical result of Godement \cite{RG58} that for $X$ paracompact and $\mathcal{F}$ any presheaf, there are isomorphisms 
\[\check{H}^p(X,\mathcal{F})\cong\check{H}^p(X,\widetilde{\mathcal{F}})\cong H^p(X,\widetilde{\mathcal{F}})\quad p\geq0\]
In particular, $\check{H}^0(X,\widetilde{\mathcal{F}})\cong\Gamma(X,\widetilde{\mathcal{F}})$, which recovers the information of \Cref{lemBc2}.
\end{proof}

\subsection{Some Cohomological Computations}\label{secB4}
This section is devoted to some elementary computations and implications of the sheaf-theoretic framework built in the previous section. \Cref{lemBc2} implies the necessity of knowing where torsion lies inside $H^0(X,\widetilde{\mathcal{F}})$, thus as matter of prime importance we would wish to ensure that homeomorphic spaces have isomorphic cohomological structure, not just isomorphic cohomology groups. This is answered in the affirmative by noting that if $\phi:X\longrightarrow Y$ is a homeomorphism, then $\phi$ induces one-to-one equivalences between the collection of open sets of each space, along with an equivalence $\widetilde{C}(Y,Y)\equiv\widetilde{C}(X,X)$. It immediately follows that for every commuting pair $(f_Y,g_Y)$ and open set $U_Y\subseteq Y$ there exists an associated triplet $(f_X,g_X,U_X)$ such that
\[S^{U_X}_{f_X,g_X}\cong S^{U_Y}_{f_Y,g_Y}\implies\mathcal{F}(U_X)=\mathcal{F}(U_Y)\]
By definition this leads to an isomorphism of stalks $\mathcal{F}_y\cong\mathcal{F}_{\phi(x)}$, and due to the obvious equivalence $\widetilde{s}(U_X)\leftrightarrow\widetilde{s}(U_Y)$ of basis sets we obtain homeomorphic associated stalk spaces. The presheaf map behaviour can thus be related through the isomorphism $\tilde{\phi}$ on stalk spaces induced by $\phi$
\[\tilde{\phi}(\eta_X(s)(x))=\eta_Y(s)(\phi(x))=\eta_Y(s)(y)\]
It is similarly easy to see that the sheaf construction behaves as expected with respect to the singleton set; indeed, the only self-map of $X=\{pt\}$ is $Id_X$, hence $H^0(\{pt\},\widetilde{\mathcal{F}})\cong\Gamma(\{pt\},\widetilde{\mathcal{F}})=\mathcal{F}(\{pt\})=\{0\}$. Providing the first explicit differentiating example between PCVP and WCVP, the following proposition concerning discrete finite spaces contrasts the negative result for WCVP obtained in \Cref{lemBa2}. While this is unsurprising from a combinatorial viewpoint, the essential fact is that we can give a proof which is almost purely cohomological.

\begin{mypro}\label{proBd1}
If $X$ is a discrete finite space, then $X$ possesses the PCVP.	
\end{mypro}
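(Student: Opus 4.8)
The plan is to exploit the cohomological reformulation of \Cref{lemBc2} rather than verifying the PCVP condition of \Cref{defA2} by hand, so that the argument collapses to a single structural statement about the index $(\infty,\infty)$. First I would record that a finite discrete space satisfies the standing fixed point hypothesis needed to build $\mathcal{F}$: since $\mathsf{Homeo}(X)$ consists exactly of the permutations of the underlying finite set, any homeomorphism $f$ has finite order $d$ with $f^d\equiv Id_X$, so taking $m$ a multiple of $d$ gives $f^{-m}\equiv Id_X$ and hence $\mathcal{FP}(f^{-m}g^{n})=\mathcal{FP}(g^{n})$; it then suffices to know that some power of $g$ fixes a point, which is a special case of the idempotent step below. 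With the $\mathsf{Homeo}$-PCVP restriction in hand, \Cref{lemBc2} says that $X$ has the PCVP precisely when no section in $\eta(\mathcal{F}(X))$ has infinite order, equivalently when $\mathcal{A}^X_{\infty,\infty}\cong\{0\}$, i.e. when $\widetilde{C}(X,X)^X_{(\infty,\infty)}=\emptyset$.

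Everything therefore reduces to showing that $\widetilde{C}(X,X)^X_{(\infty,\infty)}$ is empty: there is no commuting pair $f,g\notin\mathsf{Homeo}(X)$ with $\mathcal{CV}(f^m,g^n)\cap X=\emptyset$ for all $m,n\in\mathbb{N}\setminus\{0\}$. The key observation is that because $X$ is finite the transformation monoid $X^X$ is finite, so every self-map has an idempotent power: there exist $N,M\geq1$ with $f^{2N}=f^{N}$ and $g^{2M}=g^{M}$. Since $f$ and $g$ commute, so do the idempotents $f^N$ and $g^M$, and their product $f^Ng^M$ is again idempotent; as an idempotent self-map of a nonempty finite set it fixes every point of its (nonempty) image, so $K:=\mathcal{FP}(f^Ng^M)\neq\emptyset$.

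It remains to upgrade $f^Ng^M|_K=Id_K$ to the genuine coincidence $f^N|_K=g^M|_K$. Here I would invoke \Cref{proA1} applied to the commuting pair $(f^N,g^M)$: since $\mathcal{FP}(f^Ng^M)\neq\emptyset$, both $f^N$ and $g^M$ restrict to bijections of $K$. But $f^N$ is globally idempotent, so $f^N|_K$ is a bijection satisfying $(f^N|_K)^2=f^N|_K$; composing with its inverse forces $f^N|_K=Id_K$, and symmetrically $g^M|_K=Id_K$. Consequently $f^N(x)=x=g^M(x)$ for every $x\in K$, so $\emptyset\neq K\subseteq\mathcal{CV}(f^N,g^M)$, contradicting membership of $(f,g)$ in $\widetilde{C}(X,X)^X_{(\infty,\infty)}$. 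This yields $\widetilde{C}(X,X)^X_{(\infty,\infty)}=\emptyset$, whence $X$ possesses the PCVP by \Cref{lemBc2}.

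I expect the only genuinely nontrivial ingredient to be the idempotent-power observation together with the collapse ``bijection plus idempotent equals identity''; the reduction through \Cref{lemBc2} and the verification of the $\mathsf{Homeo}$-PCVP hypothesis are essentially bookkeeping, as is the reader's option of reading off the same conclusion directly from \Cref{defA2} without the sheaf. One should also dispatch the degenerate cases separately: for $X=\emptyset$ the PCVP holds vacuously and $\mathcal{F}$ is trivial, while for $|X|=1$ the only self-map is $Id_X$ and the claim is immediate.
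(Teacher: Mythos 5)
Your proof is correct, but it takes a genuinely different route from the paper's. The paper deliberately showcases the sheaf machinery: after checking the $\mathsf{Homeo}$-PCVP hypothesis (via the order of the permutation $f^{-1}g$ on its image), it peels off one point at a time with the Mayer--Vietoris sequence, using $\Gamma(\{pt\},\widetilde{\mathcal{F}})=\{0\}$ and the empty pairwise intersections to conclude $\Gamma(X,\widetilde{\mathcal{F}})\cong\Gamma(X\setminus\{x_1\},\widetilde{\mathcal{F}})\cong\cdots\cong\{0\}$, whence \Cref{lemBc2} gives the PCVP; the stated point of the exercise is that the argument is ``almost purely cohomological.'' You instead prove the underlying combinatorial fact directly: in the finite transformation monoid $X^X$ every element has an idempotent power, the commuting idempotents $f^N$ and $g^M$ give $\mathcal{FP}(f^Ng^M)\neq\emptyset$, and \Cref{proA1} plus ``idempotent bijection equals identity'' forces $f^N$ and $g^M$ to both restrict to the identity on $K=\mathcal{FP}(f^Ng^M)$. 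This is sound (and your verification of the $\mathsf{Homeo}$-PCVP hypothesis via $f^m=Id_X$ is if anything cleaner than the paper's), and it actually yields more: a common \emph{fixed} point of $f^N$ and $g^M$ with explicit exponents, from which $\widetilde{C}(X,X)^X_{(\infty,\infty)}=\emptyset$ and the torsion criterion of \Cref{lemBc2} follow immediately, or one can bypass the sheaf entirely via \Cref{defA2}. What you lose is precisely what the paper is trying to advertise, namely that the conclusion can be read off from the cohomological formalism with essentially no pointwise analysis; what you gain is elementarity, quantitative control, and independence from the paracompactness/Mayer--Vietoris apparatus. One small caveat: under a literal reading of \Cref{defA2} the empty space is not a PCVP space (there is no point to exhibit for the unique pair of empty maps), so your degenerate-case remark should restrict to nonempty $X$, as the paper implicitly does.
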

\begin{proof}
It is first necessary to ensure that $X=\{x_1,\ldots,x_n\}$ is at least $\mathsf{Homeo}$-PCVP, so suppose that $f,g:X\longrightarrow X$ commute for $f\in\mathsf{Homeo}(X)$. Since $f^{-1}g$ is well-defined and represents a permutation on its image, then viewing $f^{-1}g\in S_m$ its order is equal to $k$ for some $1\leq k\leq m$, where $|f^{-1}g(X)|=m$. By commutativity it follows that $(f^{-1}g)^k(x_i)=f^{-k}g^k(x_i)=x_i$ for all $x_i\in f^{-1}g(X)$, thus $x_i\in\mathcal{CV}(f^k,g^k)$. We now denote  $X_1=X\setminus\{x_1\}$ and consider the following section of the Mayer-Vietoris sequence in sheaf cohomology
{\footnotesize
\[\begin{tikzcd} 
0 \arrow[r] & H^0(X,\widetilde{\mathcal{F}}) \arrow[r] & H^0(\{x_1\},\widetilde{\mathcal{F}})\oplus H^0(X_1,\widetilde{\mathcal{F}}) \arrow[r] & H^0(X_1\cap\{x_1\},\widetilde{\mathcal{F}}) \arrow[r] &  H^1(X,\widetilde{\mathcal{F}})
\end{tikzcd}\]}
Since $X_1\cap\{x_1\}=\emptyset$ and $\Gamma(\widetilde{\mathcal{F}},\{pt\})=\{0\}$ this simplifies to a very short exact sequence
\[\begin{tikzcd} 
0 \arrow[r] & H^0(X,\widetilde{\mathcal{F}}) \arrow[r] &0\oplus H^0(X_1,\widetilde{\mathcal{F}}) \arrow[r] & 0 
\end{tikzcd}\]
and we thus obtain an isomorphism $\Gamma(X,\widetilde{\mathcal{F}})\cong\Gamma(X_1,\widetilde{\mathcal{F}})$. Recursively defining $X_0=X$ and $X_k=X_{k-1}\setminus\{x_k\}$ it immediately follows that $\Gamma(X_{k-1},\widetilde{\mathcal{F}})\cong\Gamma(X_k,\widetilde{\mathcal{F}})$ and thus $\Gamma(X,\widetilde{\mathcal{F}})\cong\Gamma(X_{n-1}=\{x_n\},\widetilde{\mathcal{F}})=\{0\}$.
\end{proof}

\begin{myrem}
The example of discrete spaces also furnishes proof that non-homeomorphic spaces can possess identical sheaf cohomology in the strongest sense. For less trivial results it will be useful to view $X$ as a ringed space $(X,\mathcal{O}_X)$, where we take the structure sheaf $\mathcal{O}_X$ to be the constant sheaf $\widetilde{\mathbb{Z}}_X$ obtained by sheafifying the constant presheaf $\mathbb{Z}_X$. This identification allows us to view the sheaf $\widetilde{\mathcal{F}}$ of $\mathbb{Z}$-modules as an $\mathcal{O}_X$-module and thus speak about quasi-coherency/coherency, maps between ringed spaces, pushforward and pullback sheaves, and other important notions. For now we will merely adopt the language without belaboring the underlying technical points. 	
\end{myrem}

The above proposition immediately brings to mind the first major computational consideration: a Mayer-Vietoris argument which preserves the presheaf information. At a minimum we would desire the ability to compute $H^0(X,\widetilde{\mathcal{F}})$ through decomposition of $X$ into the union of subspaces for which the cohomology is already known, and which need not have trivial pairwise intersection. Unfortunately, while sheaves are only defined over open subsets, $\mathcal{F}$ is only well-defined over $\mathsf{Homeo}$-PCVP and FPP spaces. However, all known ``nice enough" examples of such spaces are compact \cite[Theorem 3]{EC59}, hence for $X$ connected-- such as an adjunction space or non-trivial fibre bundle-- we cannot expect to find an open decomposition. We will introduce two notions which provide first steps by which these problems can be rectified. 

\begin{mydef}\label{defBd1}
If $\overline{U}$ denotes the closure of $U$, then for any open set $U\subseteq X$	\[\mathcal{A}_{\widetilde{C}(X,X)^{\overline{U}}_{(m,n)}}\cong\left\{\begin{array}{cc}
\mathbb{Z} & \mathrm{for}\;(m,n)=(\infty,\infty)\;\mathrm{and}\;\widetilde{C}(X,X)^{\overline{U}}_{(m,n)}\neq\emptyset\\
\mathbb{Z}_2 & \mathrm{for}\;(m,n)\neq(\infty,\infty)\;\mathrm{and}\;\widetilde{C}(X,X)^{\overline{U}}_{(m,n)}\neq\emptyset\\
\{0\} & \mathrm{if}\;\widetilde{C}(X,X)^{\overline{U}}_{(m,n)}=\emptyset
\end{array}\right.\]
The monopresheaf $\mathcal{G}$ is defined analogously to the $\mathcal{F}$ of \Cref{lemBc1}, where $\mathcal{G}(U)=\mathcal{A}_{\overline{U}}$.
\end{mydef}
A central importance of this definition is that it allows us to define $\mathcal{G}$ on the collection of closed sets which arise as closures of open subsets, namely $\mathcal{G}(\overline{U})=\mathcal{G}(U)$. It is also easy to see that $\mathcal{G}(U)=\mathcal{F}(U)$ whenever $U=\overline{U}$; in particular, $\mathcal{G}(X)=\mathcal{F}(X)$. What is not immediate is whether this equality is preserved under applying the sheafification operation. In particular, we wish to show that \Cref{lemBc2} holds when $\mathcal{G}$ replaces $\mathcal{F}$.

\begin{mylem}\label{lemBd2}
The monopresheaf $\mathcal{G}$ satisfies the conditions of \Cref{lemBc2}; moreover if $X$ is not a PCVP space, then $\eta_{\mathcal{F}}(\mathcal{F}(X))\cong\eta_{\mathcal{G}}(\mathcal{G}(X))$ 
\end{mylem}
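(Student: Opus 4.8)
The plan is to prove the two assertions separately, the guiding observation being that $\overline{X}=X$ forces $\mathcal{G}(X)=\mathcal{A}_{\overline{X}}=\mathcal{A}_X=\mathcal{F}(X)$, so the two monopresheaves agree on the one section module whose image is ultimately at issue. For the first assertion I would retrace the proof of \Cref{lemBc2} essentially verbatim, the sole modification being the recomputation of the stalks $\mathcal{G}_x=\varinjlim_{U\ni x}\mathcal{A}_{\overline{U}}$ together with the associated restriction-to-stalk maps $\rho^X_{x,m,n}$. Since colimits commute with the direct sum over the index $(m,n)$, it suffices to analyze each summand $\varinjlim_{U\ni x}\mathcal{A}^{\overline{U}}_{m,n}$ in isolation, exactly as was done for $\mathcal{F}$.

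The decisive summand is $(m,n)=(\infty,\infty)$, and the crux is that replacing $U$ by $\overline{U}$ leaves its behaviour unchanged. If $X$ is a PCVP space then no commuting pair lies in $\widetilde{C}(X,X)^{X}_{(\infty,\infty)}$, whence $\mathcal{A}^{\overline{X}}_{\infty,\infty}=\mathcal{A}^X_{\infty,\infty}\cong\{0\}$ and $\rho^X_{x,\infty,\infty}$ is the zero map, exactly as for $\mathcal{F}$. If $X$ is not a PCVP space then by definition there is a commuting pair $(f,g)$ with $\mathcal{CV}(f^m,g^n)=\emptyset$ for every $m,n\in\mathbb{N}\setminus\{0\}$; because this coincidence set is \emph{globally} empty, its intersection with any closure $\overline{U}\subseteq X$ stays empty, so $(f,g)\in\widetilde{C}(X,X)^{\overline{U}}_{(\infty,\infty)}$ for every $U\ni x$. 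Hence $\mathcal{A}^{\overline{U}}_{\infty,\infty}\cong\mathbb{Z}$ for all such $U$, the colimit is $\mathbb{Z}$, and every restriction map, in particular $\rho^X_{x,\infty,\infty}$, is the identity, again matching $\mathcal{F}$. Each finite summand is merely a colimit of copies of $\mathbb{Z}_2$ and $\{0\}$ under identity and zero maps, hence a torsion group, so the finite indices cannot influence the torsion/non-torsion dichotomy. Inserting this into the argument of \Cref{lemBc2} gives that $\eta_{\mathcal{G}}(s)$ has torsion image for every $s\in\mathcal{G}(X)$ precisely when $X$ is PCVP, and that otherwise a generator of the $(\infty,\infty)$-summand provides a global section of infinite order at every point.

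For the \emph{moreover} clause I would argue as follows. Being monopresheaves, both $\mathcal{F}$ and $\mathcal{G}$ have sheafification maps that are injective on sections, so $\eta_{\mathcal{F}}$ is injective on $\mathcal{F}(X)$ and $\eta_{\mathcal{G}}$ is injective on $\mathcal{G}(X)$; together with $\mathcal{F}(X)=\mathcal{G}(X)$ this produces the chain $\eta_{\mathcal{F}}(\mathcal{F}(X))\cong\mathcal{F}(X)=\mathcal{G}(X)\cong\eta_{\mathcal{G}}(\mathcal{G}(X))$, realized concretely by $\eta_{\mathcal{G}}\circ\eta_{\mathcal{F}}^{-1}$. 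Under the standing hypothesis that $X$ is not a PCVP space, the computation above shows the $(\infty,\infty)$-component of any $s$ is carried to an infinite-order stalk element by the \emph{same} identity map $\rho^X_{x,\infty,\infty}$ for both sheaves; thus this isomorphism matches the infinite-order witnessing sections of the two images and preserves their torsion-free rank, which is exactly the compatibility required for the later cohomological computations.

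The step I expect to be most delicate is the finite-index stalk analysis for $\mathcal{G}$: because $\mathcal{CV}(f^m,g^n)$ is closed, separating $x$ from it by a \emph{closed} neighbourhood $\overline{U}$—the natural way to force the corresponding summand to vanish in the colimit—would require regularity rather than mere Hausdorffness. I would sidestep this by noting that it is immaterial to the statement: whether or not such a closed neighbourhood exists, the finite summand remains torsion, so the closure operation genuinely interacts with the argument only in the $(\infty,\infty)$-summand, where global emptiness of the coincidence set trivializes the difference between $U$ and $\overline{U}$.
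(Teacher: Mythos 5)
Your proposal is correct, and its first half---reworking the stalk computation of \Cref{lemBc2} summand by summand, observing that the $(\infty,\infty)$ summand is insensitive to replacing $U$ by $\overline{U}$ because failure of the PCVP is witnessed by a \emph{globally} empty coincidence set, while the finite summands are torsion in any case and so cannot disturb the dichotomy---is precisely the ``careful reworking'' that the paper itself invokes without writing out, so there is nothing to compare there beyond your having made it explicit. Where you genuinely diverge is the \emph{moreover} clause. The paper constructs a non-trivial morphism of presheaves $\varphi:\mathcal{F}\longrightarrow\mathcal{G}$ (the identity on the $(\infty,\infty)$ summand when $U=X$ and zero in every other case) and extracts the isomorphism from the naturality square relating $\varphi$, $\widetilde{\varphi}$, and the sheafification maps $\eta_{\mathcal{F}},\eta_{\mathcal{G}}$. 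You instead use only that $\eta$ is injective on sections of a monopresheaf together with the equality $\mathcal{G}(X)=\mathcal{A}_{\overline{X}}=\mathcal{A}_X=\mathcal{F}(X)$, obtaining $\eta_{\mathcal{F}}(\mathcal{F}(X))\cong\mathcal{F}(X)=\mathcal{G}(X)\cong\eta_{\mathcal{G}}(\mathcal{G}(X))$ directly. Your route is shorter and, as a proof of the literal isomorphism of submodules, arguably tighter: the paper's $\varphi_X$ annihilates the finite summands, so the induced map $\widetilde{\varphi}_X$ really only matches the non-torsion parts of the two images rather than the full submodules, whereas your $\eta_{\mathcal{G}}\circ\eta_{\mathcal{F}}^{-1}$ is an isomorphism of the entire images induced by the identity on $\mathcal{F}(X)=\mathcal{G}(X)$. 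What the paper's construction buys in exchange is an honest morphism of (pre)sheaves compatible with all restriction maps, which is the sort of object one would want if the comparison were needed over proper open subsets rather than only on global sections; your closing observation that $\rho^X_{x,\infty,\infty}$ is the identity for both sheaves when $X$ is not a PCVP space supplies exactly the stalk-level compatibility that \Cref{thmBd1} actually consumes, so nothing needed downstream is lost.
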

\begin{proof}
In the case that $X$ is a PCVP space, a careful reworking of the proof of \Cref{lemBc2} analogously proves the desired result for $\mathcal{G}$. To prove the stronger condition when $X$ not a PCVP space we first show that a non-trivial map of presheaves $\varphi:\mathcal{F}\longrightarrow\mathcal{G}$ exists. By the direct summand description of $\mathcal{F}(U)=\mathcal{A}_U$ and $\mathcal{G}(U)=\mathcal{A}_{\overline{U}}$, it suffices to prove existence of the following commutative diagram for both $(m,n)=(\infty,\infty)$ and $(m,n)$ a finite pair.	
\begin{equation}
\begin{tikzcd} 
\mathcal{A}^U_{m,n} \arrow{r}{\varphi_U} \arrow[d,swap,"(\rho_\mathcal{F})^U_V"] & \mathcal{A}^{\overline{U}}_{m,n} \arrow{d}{(\rho_\mathcal{G})^U_V}\\
\mathcal{A}^V_{m,n} \arrow{r}{\varphi_V} & \mathcal{A}^{\overline{V}}_{m,n}
\end{tikzcd}
\end{equation}
To begin with, let $\varphi_U$ be the zero morphism whenever $(m,n)\neq(\infty,\infty)$; this is both a matter of technical necessity in order to deal with the problematic case where only the lower left group is trivial, as well as a technical convenience since we only care about preserving non-torsion information. Similarly, since we are interested specifically in the global sections, we define
\[\varphi_U=\left\{\begin{array}{cc}
\mathbf{1} & U=X,(m,n)=(\infty,\infty)\\
\mathbf{0} & \mathrm{otherwise}
\end{array}\right.\] 
for which the commutative properties of the above diagram can be easily verified. For $X$ not a PCVP space we thus have a non-trivial morphism of presheaves $\varphi:\mathcal{F}\longrightarrow\mathcal{G}$; using the fact that the presheaf map $\eta$ of \Cref{defBc3} is natural with respect to presheaves, the following diagrams commute:
\begin{equation}
\begin{tikzcd} 
\mathcal{F} \arrow{r}{\varphi} \arrow[d,swap,"\eta_{\mathcal{F}}"] & \mathcal{G}\arrow{d}{\eta_{\mathcal{G}}}\\
\widetilde{\mathcal{F}} \arrow{r}{\widetilde{\varphi}} & \widetilde{\mathcal{G}}
\end{tikzcd}\implies
\begin{tikzcd} 
\mathcal{F}(X) \arrow{r}{\varphi_X} \arrow[d,swap,"\eta_{\mathcal{F}}"] & \mathcal{G}(X)\arrow{d}{\eta_{\mathcal{G}}}\\
\Gamma(X,\widetilde{\mathcal{F}}) \arrow{r}{\widetilde{\varphi}_X} & \Gamma(X,\widetilde{\mathcal{G}})
\end{tikzcd}
\end{equation}
Since $\varphi_X$ is the identity map the right-side diagram implies the desired isomorphism of submodules.
\end{proof}

Now that we have established a well-behaved presheaf $\mathcal{G}$, it is necessary to consider the open subsets $U$ which form desirable elements of a cover. We will indicate the presheaf (or sheaf) on $\overline{U}$ by $\mathcal{G}_{\overline{U}}$ whenever it is necessary to emphasize that $\overline{U}$ is not being viewed as a subspace of $X$.
\begin{mydef}\label{defBd2}
Let $X$ possess either $\mathsf{Homeo}$-PCVP or the FPP and $\iota:Y\hookrightarrow X$ be the inclusion of an open subspace. We call $Y$ an \textit{\textbf{admissible subspace}} if its closure satisfies the following conditions:
\begin{enumerate}
\item $\overline{Y}$ is either a $\mathsf{Homeo}$-PCVP or an FPP space
\item There exists a continuous map $\phi:X\longrightarrow\overline{Y}$ such that $\phi|_{\overline{Y}}\equiv Id_{\overline{Y}}$
\item If $\widetilde{C}(X,X)^{\overline{Y}}_{(\infty,\infty)}\neq\emptyset$ then $\widetilde{C}(\overline{Y},\overline{Y})^{\overline{Y}}_{(\infty,\infty)}\neq\emptyset$
\end{enumerate}
$X$ is said to admit an \textit{\textbf{admissible cover}} $\mathscr{U}$ if every $U\in\mathscr{U}$ is an admissible subspace, all intersections and unions possess the $\mathsf{Homeo}$-PCVP or the FPP, and the closure of intersections is the intersection of closures.
\end{mydef}

\begin{myrem}
Note that the final condition concerning admissible covers always holds for unions, and $\overline{U_i}\cap\overline{U_j}=\overline{U_i\cap U_j}$ ensures that $\mathcal{G}(U_i\cap U_j)=\mathcal{G}(\overline{U_i\cap U_j})=\mathcal{G}(\overline{U_i}\cap\overline{U_j})$. For $X$ paracompact these relations are a technical necessity for ensuring that the Mayer-Vietoris long exact sequence carries through with regards to closures.
\end{myrem}

\begin{mylem}\label{lemBd3}
Let $X$ be paracompact Hausdorff and $\iota:Y\hookrightarrow X$ be the inclusion of an admissible subspace. Then there exist induced inclusion and projection homomorphisms
\[\iota_*:\mathcal{G}_{\overline{Y}}(\overline{Y})\longrightarrow\mathcal{G}(Y)\qquad\pi:\mathcal{G}(Y)\longrightarrow\mathcal{G}_{\overline{Y}}(\overline{Y})\]
which preserve all non-torsion elements. In addition, there exist homomorphisms 
\[\widetilde{\varphi}_{\iota}:\Gamma(\overline{Y},\widetilde{\mathcal{G}})\longrightarrow\widetilde{\mathcal{G}}(Y)\qquad\widetilde{\varphi}_\pi:\widetilde{\mathcal{G}}(Y)\longrightarrow\Gamma(\overline{Y},\widetilde{\mathcal{G}})\]
satisfying $\widetilde{\varphi}_{\iota}(\eta_{\overline{Y}}(\mathcal{G}_{\overline{Y}}(\overline{Y})))=\eta(\iota_*(\mathcal{G}_{\overline{Y}}(\overline{Y})))$ and $\widetilde{\varphi}_\pi(\eta(\mathcal{G}(Y)))=\eta_{\overline{Y}}(\pi(\mathcal{G}(Y)))$.
\end{mylem}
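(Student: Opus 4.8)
The plan is to build both homomorphisms from a single geometric device — the pushforward of commuting pairs along the retraction $\phi:X\longrightarrow\overline Y$ supplied by the second admissibility condition of \Cref{defBd2} — and then to promote everything to the sheaf level by sheafifying and invoking naturality of $\eta$, exactly as in \Cref{lemBd2}. The case $\overline Y=X$ is degenerate (then $\phi=Id_X$, $\mathcal G_{\overline Y}(\overline Y)=\mathcal G(Y)$, and all four maps are identities), so I would dispose of it first and assume $\overline Y\subsetneq X$ throughout.

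First I would construct $\iota_*$. Given a commuting pair $f,g:\overline Y\longrightarrow\overline Y$, set $\hat f=f\phi$ and $\hat g=g\phi$, self-maps of $X$. Since $\phi|_{\overline Y}\equiv Id_{\overline Y}$ one checks $\hat f\hat g=\widehat{fg}$ and $\hat f^{\,m}=f^m\phi$, so the pair commutes and $\mathcal{CV}(\hat f^{\,m},\hat g^{\,n})\cap\overline Y=\mathcal{CV}(f^m,g^n)$ (coincidences taken inside $\overline Y$). Because $\phi$ collapses each $x\in X\setminus\overline Y$ to $\phi(x)\neq x$, the map $\hat f$ is non-injective, hence $\hat f\notin\mathsf{Homeo}(X)$; thus a pair of type $(m,n)$ on $\overline Y$ is sent to a pair of the same minimal type relative to $\overline Y$ on $X$, giving the implication $\widetilde C(X,X)^{\overline Y}_{(m,n)}\neq\emptyset$ whenever $\widetilde C(\overline Y,\overline Y)^{\overline Y}_{(m,n)}\neq\emptyset$. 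Writing both modules as the direct sums in \eqref{Amodule}, I would then define $\iota_*$ summand-wise as the identity on every summand that is non-trivial in $\mathcal G_{\overline Y}(\overline Y)$ (forced non-trivial in $\mathcal G(Y)$ by the implication) and as the zero map otherwise; in particular it carries the unique non-torsion summand $\mathcal A^{\overline Y}_{\infty,\infty}\cong\mathbb Z$ isomorphically onto its counterpart, so all non-torsion elements are preserved.

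Next I would construct $\pi$ in the reverse direction, where no extension of maps is needed: define $\pi$ summand-wise as the identity on the single non-torsion summand and the zero map on every torsion summand. The only point requiring justification is that the target non-torsion summand is non-trivial whenever the source one is, i.e. $\widetilde C(X,X)^{\overline Y}_{(\infty,\infty)}\neq\emptyset\implies\widetilde C(\overline Y,\overline Y)^{\overline Y}_{(\infty,\infty)}\neq\emptyset$ — which is precisely the third admissibility condition of \Cref{defBd2}. Hence $\pi$ is a well-defined homomorphism preserving non-torsion elements.

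Finally, to reach the sheaf level I would observe that the summand-wise recipes for $\iota_*$ and $\pi$ respect the restriction maps of \Cref{lemBc1}: since $\overline Y$ is closed in $X$, for every open $V\subseteq\overline Y$ the closure of $V$ agrees whether taken in $\overline Y$ or in $X$, so the coincidence identity above forces the same type-nonemptiness pattern over every such $V$, and the requisite diagrams of $\mathbf 0/\mathbf 1$ restriction maps commute. Thus $\iota_*$ and $\pi$ are the global components of morphisms of monopresheaves; applying the sheafification functor gives $\widetilde\varphi_\iota$ and $\widetilde\varphi_\pi$, and naturality of the presheaf map $\eta$ of \Cref{defBc3} (used exactly as in \Cref{lemBd2}) yields $\widetilde\varphi_\iota\circ\eta_{\overline Y}=\eta\circ\iota_*$ and $\widetilde\varphi_\pi\circ\eta=\eta_{\overline Y}\circ\pi$ on global elements, which is the asserted compatibility. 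The main obstacle is this last promotion: the two presheaves live over different base spaces ($\overline Y$ with its subspace topology versus $X$), so realizing the global-section maps as genuine presheaf morphisms — and in particular matching germs over the boundary points $\overline Y\setminus Y$, where a section over $Y$ need not extend — is where the retraction $\phi$, the closure-of-intersection compatibility of \Cref{defBd2}, and paracompactness of $X$ must all be used; the $\mathsf{Homeo}$ degeneracy (a pair homeomorphic on $\overline Y$ may fail to remain so on $X$, and conversely) likewise has to be absorbed into the summand-wise definitions, which is harmless precisely because only the non-torsion $(\infty,\infty)$ summand is tracked.
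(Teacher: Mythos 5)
Your construction of $\iota_*$ and $\pi$ is essentially the paper's: push a commuting pair $(f,g)$ on $\overline{Y}$ forward to $(\iota f\phi,\iota g\phi)$ on $X$ via the retraction $\phi$ of \Cref{defBd2}, observe that these are never homeomorphisms of $X$ and that coincidence sets relative to $\overline{Y}$ (hence minimal pairs) are unchanged, so non-emptiness of $\widetilde{C}(\overline{Y},\overline{Y})^{\overline{Y}}_{(m,n)}$ forces that of $\widetilde{C}(X,X)^{\overline{Y}}_{(m,n)}$; combining this with the third admissibility condition for the $(\infty,\infty)$ summand gives summand-wise inclusion and projection maps carrying the unique $\mathbb{Z}$ summand isomorphically. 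That half is correct and matches the paper.

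The gap is in your final step. You propose to realize $\iota_*$ and $\pi$ as morphisms of monopresheaves and obtain $\widetilde{\varphi}_{\iota},\widetilde{\varphi}_\pi$ by applying the sheafification functor, and you yourself flag the obstruction --- $\mathcal{G}_{\overline{Y}}$ lives on $\overline{Y}$ while $\mathcal{G}$ lives on $X$, so there is no presheaf morphism between them without a pushforward or pullback, and germs at points of $\overline{Y}\setminus Y$ must be matched --- but you never resolve it; asserting that $\phi$, the closure compatibility, and paracompactness ``must all be used'' is not an argument. The resolution is that none of this machinery is needed, because the lemma only asks for homomorphisms between the global-section modules $\Gamma(\overline{Y},\widetilde{\mathcal{G}})$ and $\widetilde{\mathcal{G}}(Y)$ satisfying the stated identities on the images of $\eta$. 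Since $\mathcal{G}$ and $\mathcal{G}_{\overline{Y}}$ are monopresheaves, $\eta$ is injective and hence an isomorphism onto its image; the paper therefore defines $\widetilde{\varphi}_{\iota}$ as the composition of a retraction $q_1:\Gamma(\overline{Y},\widetilde{\mathcal{G}})\longrightarrow\eta_{\overline{Y}}(\mathcal{G}_{\overline{Y}}(\overline{Y}))$ with the isomorphism $\eta_{\overline{Y}}(\mathcal{G}_{\overline{Y}}(\overline{Y}))\cong\eta(\iota_*(\mathcal{G}_{\overline{Y}}(\overline{Y})))\subseteq\widetilde{\mathcal{G}}(Y)$, and $\widetilde{\varphi}_\pi$ analogously as $\widetilde{\mathcal{G}}(Y)\rightarrow\eta(\mathcal{G}(Y))\rightarrow\eta(\iota_*(\mathcal{G}_{\overline{Y}}(\overline{Y})))\cong\eta_{\overline{Y}}(\pi(\mathcal{G}(Y)))\subseteq\Gamma(\overline{Y},\widetilde{\mathcal{G}})$. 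The required compatibilities then hold by construction, with no sheafification of a cross-space morphism and no appeal to paracompactness; you should replace your last paragraph with this direct module-level construction.
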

\begin{proof}
Throughout, we take $(m,n)$ to be a finite pair. Given that $(f,g)\in\widetilde{C}(\overline{Y},\overline{Y})^{\overline{Y}}_{(m,n)}$ the map $\phi$ of \Cref{defBd2} provides existence of non-homeomorphic commuting maps $\iota f\phi,\iota g\phi:X\longrightarrow X$. It follows that there exists $y\in\mathcal{CV}(f^m,g^n)\cap\mathcal{CV}(\iota f^m\phi,\iota g^n\phi)$ and so $(\iota f\phi,\iota g\phi)\in\widetilde{C}(X,X)^{\overline{Y}}_{(m,n)}$. On the other hand, regardless of whether $\widetilde{C}(X,X)^{\overline{Y}}_{(m,n)}$ is empty or not, by definition $\mathcal{A}_{\widetilde{C}(\overline{Y},\overline{Y})^{\overline{Y}}_{(m,n)}}$ is either trivial or isomorphic to $\mathbb{Z}_2$. Turning to the case of the $(\infty,\infty)$ indice, the third admissibility condition assures that $\widetilde{C}(X,X)^{\overline{Y}}_{(\infty,\infty)}\neq\emptyset\implies\widetilde{C}(\overline{Y},\overline{Y})^{\overline{Y}}_{(\infty,\infty)}\neq\emptyset$; conversely, if $(f,g)\in\widetilde{C}(\overline{Y},\overline{Y})^{\overline{Y}}_{(\infty,\infty)}$ it is again easy to see that $(\iota f\phi,\iota g\phi)\in\widetilde{C}(X,X)^{\overline{Y}}_{(\infty,\infty)}$. Altogether, we have an isomorphism $\mathcal{A}_{\widetilde{C}(X,X)^{\overline{Y}}_{(\infty,\infty)}}\cong\mathcal{A}_{\widetilde{C}(\overline{Y},\overline{Y})^{\overline{Y}}_{(\infty,\infty)}}$ along with inclusions $\mathcal{A}_{\widetilde{C}(X,X)^{\overline{Y}}_{(m,n)}}\hookrightarrow\mathcal{A}_{\widetilde{C}(\overline{Y},\overline{Y})^{\overline{Y}}_{(m,n)}}$. Taking $\iota_*$ and $\pi$ to be the obvious inclusion and projection maps on the direct sum provides the presheaf result.

Since $\mathcal{G}$ is a monopresheaf we know that the presheaf map $\eta:\mathcal{G}\longrightarrow\widetilde{\mathcal{G}}$ is injective, hence isomorphic onto its image. By the definition of $\iota_*$ there exist submodule relations
\[\eta_{\overline{Y}}(\mathcal{G}_{\overline{Y}}(\overline{Y}))\cong\eta(\iota_*(\mathcal{G}_{\overline{Y}}(\overline{Y})))\subseteq\eta(\mathcal{G}(Y))\subseteq\widetilde{\mathcal{G}}(Y)\]
and we take $\widetilde{\varphi}_{\iota}$ to be the composition of the surjective homomorphism $q_1:\Gamma(\overline{Y},\widetilde{\mathcal{G}})\longrightarrow\eta_{\overline{Y}}(\mathcal{G}_{\overline{Y}}(\overline{Y}))$ with the isomorphism $\eta_{\overline{Y}}(\mathcal{G}_{\overline{Y}}(\overline{Y}))\xrightarrow{\cong}\eta(\iota_*(\mathcal{G}_{\overline{Y}}(\overline{Y})))$. To establish the corresponding result for $\widetilde{\varphi}_\pi$, we denote by $\widetilde{\pi}:\eta(\mathcal{G}(Y))\longrightarrow\eta(\iota_*(\mathcal{G}_{\overline{Y}}(\overline{Y})))$ the projection map induced by $\iota\pi$ and by $q_2:\widetilde{\mathcal{G}}(Y)\longrightarrow\eta(\mathcal{G}(Y))$, the surjective homomorphism. Thus we define $\widetilde{\varphi}_\pi$ to be equal to the following composition of homomorphisms.
\[\widetilde{\mathcal{G}}(Y)\xrightarrow{q_2}\eta(\mathcal{G}(Y))\xrightarrow{\widetilde{\pi}}\eta(\iota_*(\mathcal{G}_{\overline{Y}}(\overline{Y})))\xrightarrow{\cong}\eta_{\overline{Y}}(\mathcal{G}_{\overline{Y}}(\overline{Y}))\xrightarrow{\cong}\eta_{\overline{Y}}(\pi(\mathcal{G}(Y)))\subseteq\Gamma(\overline{Y},\widetilde{\mathcal{G}})\]
\end{proof}

\begin{mythm}\label{thmBd1}
Suppose that $(X,\widetilde{\mathbb{Z}}_X)$ is a ringed, paracompact Hausdorff space which admits an admissible cover $\mathscr{U}$. If $\mathscr{U}=\{U_1,U_2\}$, then $X$ is a PCVP space if both subspaces are. If for all elements of $\mathscr{U}=\{U_1,\ldots,U_n\}$ pairwise intersections are trivial, then $X$ is a PCVP space if all elements of $\mathscr{U}$ are.
\end{mythm}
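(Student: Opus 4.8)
The plan is to argue the contrapositive of each implication and to feed the torsion criterion of \Cref{lemBc2} --- transported from $\mathcal{F}$ to $\mathcal{G}$ by \Cref{lemBd2} --- into the opening segment of a Mayer--Vietoris sequence for $\widetilde{\mathcal{G}}$. Before doing anything cohomological I would note that $X$, being the union of the admissible cover $\mathscr{U}$, inherits the $\mathsf{Homeo}$-PCVP or FPP hypothesis, so that $\mathcal{G}$ and its sheafification $\widetilde{\mathcal{G}}$ are defined on $X$ and on every union and intersection produced by $\mathscr{U}$; the clause $\overline{U_i}\cap\overline{U_j}=\overline{U_i\cap U_j}$ in \Cref{defBd2} is precisely what makes $\mathcal{G}$ unambiguous on intersections and lets the sequence ``carry through with regards to closures.'' By \Cref{corBc1} the \v{C}ech and sheaf cohomologies agree on the paracompact $X$ and $\check{H}^0(X,\widetilde{\mathcal{G}})\cong\Gamma(X,\widetilde{\mathcal{G}})$, so the \v{C}ech complex of $\mathscr{U}$ is available.

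For $\mathscr{U}=\{U_1,U_2\}$ I would write the left end of the sequence
\[0\longrightarrow\Gamma(X,\widetilde{\mathcal{G}})\xrightarrow{\ \alpha\ }\Gamma(U_1,\widetilde{\mathcal{G}})\oplus\Gamma(U_2,\widetilde{\mathcal{G}})\xrightarrow{\ \beta\ }\Gamma(U_1\cap U_2,\widetilde{\mathcal{G}})\]
in which $\alpha$ is the pair of restriction maps and is injective (this is just the separation axiom for the sheaf $\widetilde{\mathcal{G}}$ on the cover). Assume for contradiction that $X$ is not PCVP. Then \Cref{lemBd2} produces $s\in\mathcal{G}(X)$ whose image $\eta(s)$ takes values of infinite order, the infinite part sitting in the $(\infty,\infty)$-summand $\mathcal{A}^X_{\infty,\infty}\cong\mathbb{Z}$. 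Since the underlying commuting pair has empty coincidence set over all of $X$, it is a fortiori coincidence-free over $\overline{U_i}$, so $\widetilde{C}(X,X)^{\overline{U_i}}_{(\infty,\infty)}\neq\emptyset$ and $\mathcal{A}^{U_i}_{\infty,\infty}\cong\mathcal{A}^X_{\infty,\infty}$; by \Cref{lemBc1} the map $\rho^X_{U_i}$ is therefore the identity on this summand, so $\rho^X_{U_i}s\in\mathcal{G}(U_i)$ remains non-torsion. Now \Cref{lemBd3} applies: $\pi$ preserves non-torsion, hence $\widetilde{\varphi}_\pi\big(\eta(\rho^X_{U_i}s)\big)=\eta_{\overline{U_i}}\big(\pi(\rho^X_{U_i}s)\big)$ is a section in $\eta_{\overline{U_i}}\big(\mathcal{G}_{\overline{U_i}}(\overline{U_i})\big)$ taking a value of infinite order, which by \Cref{lemBc2} applied intrinsically to $\overline{U_i}$ contradicts the PCVP of that subspace. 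Thus both pieces being PCVP forces $X$ to be PCVP.

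For $\mathscr{U}=\{U_1,\dots,U_n\}$ with trivial pairwise intersections the same reflection works once the relevant \v{C}ech complex
\[0\longrightarrow\Gamma(X,\widetilde{\mathcal{G}})\longrightarrow\bigoplus_{i}\Gamma(U_i,\widetilde{\mathcal{G}})\longrightarrow\bigoplus_{i<j}\Gamma(U_i\cap U_j,\widetilde{\mathcal{G}})\longrightarrow\cdots\]
is set up: each $U_i\cap U_j$ is trivial, so $\Gamma(U_i\cap U_j,\widetilde{\mathcal{G}})=\{0\}$ exactly as for the singleton in \Cref{proBd1}, and the restriction map into $\bigoplus_i\Gamma(U_i,\widetilde{\mathcal{G}})$ is again injective. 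A non-PCVP witness on $X$ then restricts to a non-torsion element of some $\mathcal{G}(U_i)$ by the identical component argument, and the transfer of \Cref{lemBd3} again contradicts the PCVP of the corresponding $\overline{U_i}$.

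I expect the main obstacle to be foundational rather than combinatorial: making rigorous that the Mayer--Vietoris/\v{C}ech sequence for $\widetilde{\mathcal{G}}$ is valid when the pieces are handled through their closures, and that the sheaf restriction $\Gamma(X,\widetilde{\mathcal{G}})\to\Gamma(U_i,\widetilde{\mathcal{G}})$ is genuinely compatible with the intrinsic presheaf $\mathcal{G}_{\overline{U_i}}$ via the homomorphisms $\widetilde{\varphi}_\pi$ of \Cref{lemBd3}. This is exactly where the three admissibility conditions must be used together --- the retraction $\phi$ to build the comparison maps, the $(\infty,\infty)$-compatibility to guarantee $\rho^X_{U_i}$ is the identity on the infinite summand, and $\overline{U_i}\cap\overline{U_j}=\overline{U_i\cap U_j}$ (together with triviality of the intersections when $n\geq 3$) to keep every set produced by the cover inside the class on which $\mathcal{G}$ is defined. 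Once these identifications are in place, the remaining order-theoretic bookkeeping (injective maps reflecting non-torsion, $\eta$ preserving torsion, and $\pi$ preserving non-torsion) is routine.
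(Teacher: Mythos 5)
Your proposal is correct and follows essentially the same route as the paper: argue by contradiction, use \Cref{lemBd2} to produce a global section of $\mathcal{G}(X)$ with non-torsion image, push it through the injective restriction arrow of the Mayer--Vietoris/sheaf-axiom sequence into the pieces, and transfer it via the maps $\widetilde{\varphi}_\pi$ of \Cref{lemBd3} to a non-torsion section over some $\overline{U_i}$, contradicting its PCVP; the disjoint $n$-fold case is handled by the same reduction once the intersections vanish. Your explicit tracking of the $(\infty,\infty)$-summand under $\rho^X_{U_i}$ is a slightly more concrete rendering of the paper's statement that $\varphi_1$ is injective and $\widetilde{\varphi}_{\pi,U_i}$ preserves non-torsion sections, but it is not a different argument.
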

\begin{proof}
Since $\widetilde{\mathcal{G}}$ takes values in $\mathbf{Mod}_{\mathbb{Z}}$, which is an Abelian category, the property of being a sheaf is equivalent to existence of an exact sequence
\begin{equation}\label{sheafexactseq}
\begin{gathered}
\begin{tikzcd} 
0 \arrow[r] & \widetilde{\mathcal{G}}(U) \arrow[r,"\widetilde{\varphi}_1"] & \prod_{\alpha\in\Lambda}\widetilde{\mathcal{G}}(U_\alpha)\arrow[r,"\widetilde{\varphi}_2"] & \prod_{(\alpha,\beta)\in\Lambda\times\Lambda}\widetilde{\mathcal{G}}(U_\alpha\cap U_\beta) 
\end{tikzcd}\\
\widetilde{\varphi}_1=(\widetilde{\rho}^U_{U_\alpha})_{\alpha\in\Lambda}\quad\mathrm{and}\quad\widetilde{\varphi}_2=\left(\widetilde{\rho}^{U_\alpha}_{U_\alpha\cap U_\beta}-\widetilde{\rho}^{U_\beta}_{U_\alpha\cap U_\beta}\right)_{(\alpha,\beta)\in\Lambda\times\Lambda}
\end{gathered}
\end{equation}
for any open cover $\{U_\alpha\}_{\alpha\in\Lambda}$ of an open subset $U\subseteq X$. Moreover, by the construction in \Cref{defBd1} it is not difficult to verify that for any such arbitrary open cover the presheaf restriction maps induce an inclusion $\varphi_1:\mathcal{G}(U)\longrightarrow\bigoplus_{\alpha\in\Lambda}\mathcal{G}(U_\alpha)$. Now, an admissible cover $\mathscr{U}=\{U_1,U_2\}$ of $X$ also gives rise to the following section of the standard Mayer-Vietoris sequence, where all groups are well-defined due to the admissibility condition:
\begin{equation*}
\begin{tikzcd} 
0 \arrow[r] & \Gamma(X,\widetilde{\mathcal{G}}) \arrow[r] & \Gamma(\overline{U}_1,\widetilde{\mathcal{G}})\oplus \Gamma(\overline{U}_2,\widetilde{\mathcal{G}}) \arrow[r] & \Gamma(\overline{U}_1\cap \overline{U}_2,\widetilde{\mathcal{G}}) \arrow[r] &  H^1(X,\widetilde{\mathcal{G}})
\end{tikzcd}	
\end{equation*}
Putting all the pieces together with respect to our admissible cover $\mathscr{U}=\{U_1,U_2\}$ we obtain the following diagram, where the top row is the Mayer-Vietoris exact sequence, the middle row exact sequence comes from \eqref{sheafexactseq}, the third row has injective $\varphi_1$ but is not necessarily exact, and the middle vertical pair of homomorphisms are as defined in \Cref{lemBd3}:
\begin{equation}
\begin{tikzcd} 	
0 \arrow[r] & \Gamma(X,\widetilde{\mathcal{G}}) \arrow[r] & \Gamma(\overline{U}_1,\widetilde{\mathcal{G}})\oplus \Gamma(\overline{U}_2,\widetilde{\mathcal{G}}) \arrow[r] & \Gamma(\overline{U}_1\cap \overline{U}_2,\widetilde{\mathcal{G}})\\
0 \arrow[r] & \Gamma(X,\widetilde{\mathcal{G}}) \arrow[r,"\widetilde{\varphi}_1"] \arrow[u,equal]& \widetilde{\mathcal{G}}(U_1)\oplus\widetilde{\mathcal{G}}(U_2)\arrow[r,"\widetilde{\varphi}_2"] \arrow[u,transform canvas={xshift=5ex},swap,"\widetilde{\varphi}_{\pi,U_2}"]\arrow[u,transform canvas={xshift=-5ex},"\widetilde{\varphi}_{\pi,U_1}"]& \widetilde{\mathcal{G}}(U_1\cap U_2)\\
0 \arrow[r] & \mathcal{G}(X) \arrow[r,"\varphi_1"]\arrow[u,"\eta"] & \mathcal{G}(U_1)\oplus\mathcal{G}(U_2)\arrow[r]\arrow[u,"\eta"] & \mathcal{G}(U_1\cap U_2) 
\end{tikzcd}
\end{equation}
Assume to the contrary that $\overline{U}_1$ and $\overline{U}_2$ are PCVP spaces but $X$ is not. By \Cref{lemBd2}, $\eta(\mathcal{G}(X))$ contains a strictly non-torsion section-- in fact, the proof of \Cref{lemBc2} shows that there exist countably many such sections. Since $\widetilde{\varphi}_1$ and $\varphi_1$ are restriction maps, the lower left-hand square commutes due to $\eta$ being a morphism of presheaves. Traversing the diagram from $\mathcal{G}(X)$ to the top row along the center column we obtain the module $L=(\widetilde{\varphi}_{\pi,U_1}\oplus\widetilde{\varphi}_{\pi,2})(\eta(\varphi_1(\mathcal{G}(X))))$. Since $\eta$ and $\varphi_1$ are injective, and $\widetilde{\varphi}_{\pi,U_i}$ sends non-torsion sections to non-torsion sections \footnote{By this we specifically mean that if $(\eta(\varphi_1(s)))(u_1,u_2)\in\mathcal{G}_{u_1}\oplus\mathcal{G}_{u_2}$ is of infinite order, then so is its image under the stalk space homomorphism induced by $(\widetilde{\varphi}_{\pi,U_1}\oplus\widetilde{\varphi}_{\pi,U_2})$.}, it follows that there exists $\sigma\in L$ such that $\sigma(u_1,u_2)$ takes only values of infinite order. However, it is also seen that
\begin{equation}
\begin{aligned}
L\subseteq(\widetilde{\varphi}_{\pi,U_1}\oplus\widetilde{\varphi}_{\pi,2})(\eta(\mathcal{G}(U_1))\oplus\eta(\mathcal{G}(U_2)))&=\bigoplus_{i=1}^2\eta_{\overline{U_i}}(\pi(\mathcal{G}(\overline{U_i})))\\
&=\bigoplus_{i=1}^2\eta_{\overline{U_i}}(\mathcal{G}_{\overline{U_i}}(\overline{U_i}))\subseteq\Gamma(\overline{U_1},\widetilde{\mathcal{G}})\oplus \Gamma(\overline{U_2},\widetilde{\mathcal{G}})
\end{aligned}
\end{equation}
hence by \Cref{lemBd2} the PCVP condition implies that the only non-trivial sections $\sigma\in\eta_{\overline{U_i}}(\mathcal{G}_{\overline{U_i}}(\overline{U_i}))$ are such that $\sigma(u_1,u_2)$ is torsion for all $u_i\in U_i$.

Now suppose that $\mathscr{U}=\{U_1,\ldots,U_n\}$ is an admissible cover where $U_i\cap U_j=\emptyset$ for $i\neq j$. Pairwise disjointedness implies that every $U_i$ is closed, hence $U_i=\overline{U_i}$ and by admissibility we know that $V_k=\bigcup_{i=1}^kU_i$ is always a $\mathsf{Homeo}$-PCVP space for $1\leq k\leq n$. Since $V_{n-1}\cap U_n=\emptyset$ and $X=V_{n-1}\sqcup U_n$, the relevant section of the Mayer-Vietoris sequence is simply
\[\begin{tikzcd} 
0 \arrow[r] & \Gamma(X,\widetilde{\mathcal{G}}) \arrow[r] & \Gamma(V_{n-1},\widetilde{\mathcal{G}})\oplus\Gamma(U_n,\widetilde{\mathcal{G}}) \arrow[r] & 0 \arrow[r] &  H^1(X,\widetilde{\mathcal{G}})
\end{tikzcd}\]
The isomorphism $\Gamma(X,\widetilde{\mathcal{G}})\cong\Gamma(V_{n-1},\widetilde{\mathcal{G}})\oplus\Gamma(U_n,\widetilde{\mathcal{G}})$ has an obvious extension to $\Gamma(X,\widetilde{\mathcal{G}})\cong\bigoplus_{i=1}^n\Gamma(U_i,\widetilde{\mathcal{G}})$ by using the decomposition $V_{k}=U_{k}\sqcup V_{k-1}$ and repeating the Mayer-Vietoris argument $n-1$ times. The desired result is then an immediate consequence of applying the argument for the two-element admissible cover to the following diagram
\begin{equation}
\begin{tikzcd} 	
0 \arrow[r] & \Gamma(X,\widetilde{\mathcal{G}}) \arrow[r] & \bigoplus_{i=1}^n\Gamma(U_i,\widetilde{\mathcal{G}}) \arrow[r] & 0\\
0 \arrow[r] & \Gamma(X,\widetilde{\mathcal{G}}) \arrow[r,"\widetilde{\varphi}_1"] \arrow[u,equal]& \bigoplus_{i=1}^n\widetilde{\mathcal{G}}(U_i)\arrow[r,"\widetilde{\varphi}_2"] \arrow[u,"\oplus_i\widetilde{\varphi}_{\pi,U_i}"]& 0\\
0 \arrow[r] & \mathcal{G}(X) \arrow[r,"\varphi_1"]\arrow[u,"\eta"] & \bigoplus_{i=1}^n\mathcal{G}(U_i)\arrow[r]\arrow[u,"\eta"] & 0 
\end{tikzcd}
\end{equation}
\end{proof}

\begin{mypro}\label{proBd2}
Let $(X\times F,\widetilde{\mathbb{Z}}_{X\times F})$ be a ringed, connected and compact PCVP space-- necessarily Hausdorff-- for $F$ a discrete finite space, then $X\times F$ possesses the PCVP.	
\end{mypro}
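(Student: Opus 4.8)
The plan is to prove the defining periodic coincidence property of $E:=X\times F$ directly for an arbitrary commuting pair, and then read the conclusion off the sheaf‑theoretic criterion of \Cref{lemBc2}/\Cref{corBc1}. Writing $n=|F|$ and using that $X$ is connected, $E$ is the disjoint union of the $n$ clopen copies $X_i:=X\times\{f_i\}$, each homeomorphic to $X$, and $\overline{X_i}=X_i$. Since the argument below produces a periodic coincidence for \emph{every} commuting pair (whether or not a homeomorphism is involved), $E$ is in particular $\mathsf{Homeo}$-PCVP, so $\mathcal{F}$ is defined; moreover the vanishing of the obstructing $\mathbb{Z}$-summand $\mathcal{A}^{E}_{\infty,\infty}$ — equivalently $\widetilde{C}(E,E)^{E}_{(\infty,\infty)}=\emptyset$ — is exactly the assertion that every commuting pair has a power coincidence somewhere in $E$, which by \Cref{lemBc2} certifies the PCVP.

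Given a commuting pair $f,g:E\longrightarrow E$, the first step is to push it down to the index set. Because each $X_i$ is connected and clopen, $f$ and $g$ carry each copy into a single copy, inducing self-maps $\sigma,\tau$ of the finite set $S=\{1,\dots,n\}$, and commutativity of $(f,g)$ forces $\sigma\tau=\tau\sigma$. The combinatorial heart of the proof is an elementary lemma on commuting self-maps of a finite set: there exist $i\in S$ and $p,q\geq 1$ with $\sigma^{p}(i)=i$ and $\tau^{q}(i)=i$. I would prove this by passing to eventual images. The set $S_1=\bigcap_{k\geq1}\sigma^{k}(S)$ is nonempty and $\sigma$-invariant, $\sigma$ restricts to a bijection of $S_1$, and $\tau(S_1)\subseteq S_1$ since $\tau$ commutes with $\sigma$; repeating the construction with $\tau$ inside $S_1$ yields a nonempty set $S_2$ on which both $\sigma$ and $\tau$ act as permutations, and any $i\in S_2$ together with the orders $p,q$ of $\sigma|_{S_2},\tau|_{S_2}$ suffices.

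With such an index $i$ fixed, the copy $X_i$ is invariant under both $f^{p}$ and $g^{q}$, so restriction gives genuine self-maps $A:=f^{p}|_{X_i}$ and $B:=g^{q}|_{X_i}$ of $X_i\cong X$. These commute, because $f^{p}g^{q}=g^{q}f^{p}$ and both restrict compatibly to the invariant copy $X_i$. Now I would invoke the PCVP of $X$: there exist $r,s\geq1$ and $z\in X_i$ with $A^{r}(z)=B^{s}(z)$. Using invariance once more, $f^{pr}|_{X_i}=(f^{p}|_{X_i})^{r}=A^{r}$ and $g^{qs}|_{X_i}=B^{s}$, so $f^{pr}(z)=g^{qs}(z)$, exhibiting the sought periodic coincidence; the uniform version follows verbatim, since the exponents are assembled from the fixed orders $p,q$ and the PCVP data of $X$.

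The main obstacle is conceptual rather than computational: one should resist deriving the result from the disjoint-cover case of \Cref{thmBd1} applied to $\mathscr{U}=\{X_1,\dots,X_n\}$, because that cover is in general \emph{not} admissible. Indeed the third condition of \Cref{defBd2} can fail for a non-core copy $X_j$: an ambient commuting pair may collapse $X_j$ into another copy along an infinite aperiodic orbit (for instance, one coordinate acting by a homeomorphism of $X$ with a non-periodic point), producing an element of $\widetilde{C}(E,E)^{X_j}_{(\infty,\infty)}$ even though $X_j\cong X$ is intrinsically PCVP. This is precisely why the coincidence must be located on a copy $X_i$ that is periodic for \emph{both} $\sigma$ and $\tau$; the combinatorial lemma guarantees at least one such core copy, and the remaining care lies in checking that the restricted maps $A,B$ genuinely commute and that the PCVP coincidence lifts to the correct powers of $f$ and $g$.
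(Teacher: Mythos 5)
Your proof is correct, but it takes a genuinely different route from the paper. The paper stays inside the sheaf-theoretic formalism of \S\ref{secB4}: it checks that the cover $\mathscr{U}=\{X_1,\dots,X_n\}$ by connected components is an \emph{admissible cover} in the sense of \Cref{defBd2} (using the retraction $\phi(x,y_i)=(x,y_1)$ for the second condition and arguing that $\widetilde{C}(X\times F,X\times F)^{X_1}_{(\infty,\infty)}=\emptyset$ so that the third condition is vacuous), and then invokes the disjoint-cover case of \Cref{thmBd1}. You instead prove the defining property of the PCVP directly: push $(f,g)$ down to commuting self-maps $\sigma,\tau$ of the finite index set, extract a common periodic index via eventual images, restrict $f^{p},g^{q}$ to the invariant copy $X_i\cong X$, and apply the PCVP of $X$ (legitimate by \Cref{thmA1}); the invariance and commutativity checks you sketch all go through, and once every commuting pair has a power coincidence, $E$ is PCVP by definition, so the detour through \Cref{lemBc2} in your first paragraph is dispensable window dressing rather than a logical ingredient. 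Your route buys elementarity and robustness --- it needs neither paracompactness nor the ringed-space structure, and it gives explicit uniform bounds on the exponents when $X$ is uniformly PCVP --- whereas the paper's route buys a worked illustration of the admissible-cover/Mayer--Vietoris machinery it has just constructed. Your caution about admissibility is also well placed: the delicate point in the paper's argument is precisely the claim that $\widetilde{C}(X\times F,X\times F)^{X_1}_{(\infty,\infty)}$ is empty, and the step there from $(z,y_1)\in\mathcal{CV}(\phi f^{m}\phi,\phi g^{n}\phi)$ to $(z,y_1)\in\mathcal{CV}(f^{m},g^{n})$ discards the $F$-coordinate that $\phi$ collapses, which is exactly the failure mode you describe (a pair whose power-coincidences all land in a copy other than $X_1$). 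Locating the coincidence on a copy that is periodic for \emph{both} induced index maps, as you do, is the clean way around this.
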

\begin{proof}
Note that $X\times F=\bigsqcup_{i=1}^nX\times\{y_i\}=\bigsqcup_{i=1}^nX_i$ where $X_i\simeq X$ and $X_i$ is both open and closed. Compactness of $X$ along with finiteness of $F$ ensures that $X\times F$ is paracompact, while the $\mathsf{Homeo}$-PCVP nature of $X\times F$ is proven by applying an argument analogous to that of \Cref{proBd1} to the cover $\mathscr{U}$ of connected components. By \Cref{thmBd1} we only need to prove that $\mathscr{U}=\{X_1,\ldots,X_n\}$ is an admissible cover, and since all  $X_i$ are homeomorphic it suffices to show that $X_1$ is an admissible subspace.

The first condition of \Cref{defBd2} is trivially satisfied, and the second condition holds with $\phi:X\times F\longrightarrow X_1$ defined according to $\phi(x,y_i)=(x,y_1)$. Now assume that $(f,g)\in\widetilde{C}(X\times F,X\times F)^{X_1}_{(\infty,\infty)}$, where disjointedness of the cover together with connectedness of $X$ implies that $f:X_1\longrightarrow X_k$ and $g:X_1\longrightarrow X_j$. The pair of maps $\phi f\phi,\phi g\phi:X_1\longrightarrow X_1$ commute since $f$ and $g$ do, thus by the PCVP hypothesis there exists $(z,y_1)\in\mathcal{CV}((\phi f\phi)^m,(\phi g\phi)^n)=\mathcal{CV}(\phi f^m\phi,\phi g^n\phi)$. However, this means that $(z,y_1)\in\mathcal{CV}(f^m,g^n)\cap X_1$, which is a contradiction. It follows that $\widetilde{C}(X\times F,X\times F)^{X_1}_{(\infty,\infty)}$ must be empty and so the third admissibility condition is vacuously satisfied.
\end{proof}

Since sheaf behaviour is local, paralleling the discussion following \Cref{lemBa4} we might expect the local triviality condition of fibre bundles $(E,X,\pi)$ to allow for the choosing of a suitable open cover such that \Cref{proBd2} extends. In particular, we know that for a topological space $X$ the C\v{e}ch cohomology group $\check{H}^0(X,\widetilde{\mathcal{F}})$ is isomorphic to $\check{H}^0(\mathscr{U},\widetilde{\mathcal{F}})$ for any choice of open cover $\mathscr{U}$.

\begin{mycon}
Let $\Gamma$ be a finite group and $(E,X,\pi)$ be a principal $\Gamma$-bundle, where $(X,\widetilde{\mathbb{Z}}_X)$ is a ringed, connected, compact, Hausdorff, and PCVP space. If $X$ admits a covering of admissible subspaces, then the total space $E$ has the PCVP.
\end{mycon}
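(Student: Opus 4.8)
The plan is to realize the total space $E$ as a paracompact Hausdorff space carrying the presheaf $\mathcal{G}$ of \Cref{defBd1} and to prove its PCVP by the Mayer--Vietoris machinery of \Cref{thmBd1}, applied to a cover of $E$ built by lifting an admissible cover of $X$ through $\pi$. Since $\Gamma$ is finite and discrete, the model fibre is a finite discrete space, so $\pi\colon E\longrightarrow X$ is a finite regular covering and $E$ is compact Hausdorff, hence paracompact; viewing $E$ as the ringed space $(E,\widetilde{\mathbb{Z}}_E)$ we may form $\widetilde{\mathcal{G}}$ once we know $E$ carries the required fixed-point restriction. First I would establish that $E$ is $\mathsf{Homeo}$-PCVP so that $\mathcal{G}$ and its sheafification are defined on $E$: given commuting $f,g\colon E\longrightarrow E$ with $f$ a homeomorphism, the deck-transformation structure of the covering controls how $f$ permutes sheets locally, and combining this with the (Homeo-)PCVP of $X$ one should pass to a power $f^{-N}g^{N}$ with nonempty fixed-point set. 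I expect this reduction to require the covering structure in an essential way, since an arbitrary self-map of $E$ need not descend to $X$.

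The second step is to produce an admissible cover of $E$. Using compactness of $X$ together with local triviality, I would refine the given covering by admissible subspaces to a \emph{finite} admissible cover $\{U_1,\dots,U_k\}$ of $X$ over each of which the bundle trivializes; a technical lemma is needed here to check that intersecting an admissible subspace with a trivializing open set preserves admissibility in the sense of \Cref{defBd2}. Setting $V_i=\pi^{-1}(U_i)$, local triviality gives $V_i\cong U_i\times\Gamma$ and, passing to closures, $\overline{V_i}\cong\overline{U_i}\times\Gamma$; crucially, since the bundle is already trivial over each $U_i$ it is trivial over every intersection, so all overlaps $\pi^{-1}(U_i\cap U_j)\cong(U_i\cap U_j)\times\Gamma$ are again trivial products. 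By \Cref{proBd2} every product of a connected, compact PCVP space with the finite discrete $\Gamma$ is PCVP, so all members of $\{V_i\}$ and all their overlaps are PCVP. I would then verify the conditions of \Cref{defBd2} for $\{V_i\}$: the retraction $\phi\colon X\longrightarrow\overline{U_i}$ lifts through a trivialization to a retraction $E\longrightarrow\overline{V_i}$, while the $(\infty,\infty)$-condition and the identity $\overline{V_i\cap V_j}=\overline{V_i}\cap\overline{V_j}$ transfer fibrewise from the corresponding statements on $X$.

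With this cover in hand the goal is to invoke \Cref{thmBd1}, and here lies the main obstacle. As stated, \Cref{thmBd1} only treats a two-element admissible cover or one with pairwise trivial intersections, whereas a \emph{nontrivial} principal bundle forces the lifted cover $\{V_i\}$ to have genuine overlaps governed by the $\Gamma$-valued transition functions, and because $X$ is connected the cover cannot be made disjoint. The proposed remedy is to run the inductive Mayer--Vietoris argument of the disjoint case but allowing nonempty overlaps: order the cover, set $W_j=\bigcup_{i\le j}V_i$, and treat each stage $W_j=W_{j-1}\cup V_j$ as a two-element admissible cover. This requires showing inductively that every partial union $W_{j-1}$ remains admissible and PCVP and that the overlap $W_{j-1}\cap V_j$ is admissibly controlled; the fact that all overlaps are trivial products, hence PCVP, is exactly what should keep the connecting maps $\widetilde{\varphi}_{\pi,\,\cdot}$ of \Cref{lemBd3} manageable at each step.

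The genuinely hard part --- and the reason this remains a conjecture rather than a theorem --- is controlling the Mayer--Vietoris connecting homomorphism $H^0(\,\cdot\,,\widetilde{\mathcal{G}})\longrightarrow H^1(\,\cdot\,,\widetilde{\mathcal{G}})$ across these nontrivial overlaps: one must rule out the creation, by gluing, of an infinite-order global section of $\widetilde{\mathcal{G}}$ on $E$ out of sections that are torsion-valued on each $\overline{V_i}\cong\overline{U_i}\times\Gamma$. By \Cref{lemBc2} and \Cref{lemBd2} such a section would correspond to an $(\infty,\infty)$-type commuting pair on $E$ that is coincidence-free in every power; the natural strategy is to use the free $\Gamma$-action to compose or average lifts --- for instance replacing $f$ by $f^{|\Gamma|}$ --- so as to descend the resulting obstruction to $X$ and contradict the PCVP of $X$ guaranteed by \Cref{thmBd1}. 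Making this descent compatible with the sheaf bookkeeping over the overlaps is precisely the step that \Cref{thmBd1} does not yet supply, and I would expect the bulk of the work to go into extending that theorem to finite admissible covers with PCVP overlaps.
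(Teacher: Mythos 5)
The statement you are addressing is labelled a \emph{Conjecture} in the paper: no proof is given there, so there is no argument of the author's to compare yours against. Your proposal is a reasonable plan of attack, but it is not a proof, and you say as much yourself. The decisive step --- extending \Cref{thmBd1} from two-element admissible covers (or covers with pairwise trivial intersections) to finite admissible covers with the genuinely nontrivial overlaps that any nontrivial principal bundle over a connected base forces --- is exactly the content that is missing, and your inductive scheme $W_j=W_{j-1}\cup V_j$ merely relocates the difficulty: at each stage you must show that the partial union $W_{j-1}$ is again an admissible subspace and PCVP, and that the overlap $W_{j-1}\cap V_j$ (which is a union of trivial pieces, not a single one) is admissibly controlled, which is the same unproved gluing problem one level down.

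Beyond that acknowledged obstacle there are several earlier steps asserted without justification. First, you need $E$ to be $\mathsf{Homeo}$-PCVP before $\mathcal{G}$ and $\widetilde{\mathcal{G}}$ are even defined on $E$; you note that an arbitrary self-map of $E$ need not descend to $X$, but then offer no mechanism for handling commuting pairs that are not bundle maps, and the deck-transformation structure gives you nothing for such maps. Second, your use of \Cref{proBd2} to conclude that each $\overline{V_i}\cong\overline{U_i}\times\Gamma$ is PCVP requires $\overline{U_i}$ to be a connected, compact PCVP space, whereas \Cref{defBd2} only guarantees that the closure of an admissible subspace is $\mathsf{Homeo}$-PCVP or FPP; PCVP does not obviously pass to closed subspaces of a PCVP space, since commuting self-maps of $\overline{U_i}$ need not extend to $X$. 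Third, the claimed lift of the retraction $\phi:X\longrightarrow\overline{U_i}$ to a retraction $E\longrightarrow\overline{V_i}$ presumes that $E$ is isomorphic to the pullback $\phi^{*}(E|_{\overline{U_i}})$, which is not automatic for a nontrivial bundle. Each of these would need its own lemma before the Mayer--Vietoris machinery could be brought to bear.
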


We now briefly turn to the second major computational tool we would wish to have at our disposal: the ability to determine the cohomology of a paracompact space $X\times Y$ from the cohomology of the factor spaces. As a simple example, since the admissible cover of \Cref{proBd2} is comprised of clopen subspaces, the argument holds equally well for $\widetilde{\mathcal{F}}$, and the proof of \Cref{thmBd1} shows that $H^0(X\times F,\widetilde{\mathcal{F}})\cong\bigoplus_{i=1}^n H^0(X,\widetilde{\mathcal{F}})$. This can be expressed in a manner which clearly emphasizes the role of $X$ and $F$. For any open subset $U\subseteq X\times F$ and any pair of sheaves $\widetilde{\mathcal{F}_1},\widetilde{\mathcal{F}_2}$ defined on $X\times F$ denote by $\widetilde{\mathcal{F}_1}\otimes_{\widetilde{\mathbb{Z}}_{X\times F}}\widetilde{\mathcal{F}_2}$ the sheafification of the presheaf tensor product $\widetilde{\mathcal{F}_1}\otimes_{p,\widetilde{\mathbb{Z}}_{X\times F}}\widetilde{\mathcal{F}_2}$ defined according to
\[(\widetilde{\mathcal{F}_1}\otimes_{p,\widetilde{\mathbb{Z}}_{X\times F}}\widetilde{\mathcal{F}_2})(U)=\widetilde{\mathcal{F}_1}(U)\otimes_{\widetilde{\mathbb{Z}}_{X\times F}(U)}\widetilde{\mathcal{F}_2}(U)\]
Now if $\widetilde{\mathcal{F}_1}$ and $\widetilde{\mathcal{F}_2}$ are sheaves defined on $X$ and $F$, respectively, and $p_1:X\times F\longrightarrow X$ and $p_2:X\times F\longrightarrow F$ are the projection maps, then recall that the pullback (inverse image) sheaf $p_i^{-1}\widetilde{\mathcal{F}}_i$ on $X\times F$ is the sheafification of the presheaf $p_i^+\widetilde{\mathcal{F}}_i$ defined as
\[p_i^+\widetilde{\mathcal{F}}_i(U):=\varinjlim\limits_{V\supseteq p_i(U)}\widetilde{\mathcal{F}}_i(V)\]
Considering all spaces as ringed spaces with structure sheaf $\widetilde{\mathbb{Z}}$ and using the fact that the pullback of a constant sheaf is itself a constant sheaf, we obtain a relation similar to the familiar K\"{u}nneth isomorphism concerning quasicoherent sheaves on schemes: 
\begin{equation}
\begin{aligned}
H^0(X\times F, \widetilde{\mathcal{F}})&\cong	
H^0(X\times F, (p_1^{-1}\widetilde{\mathcal{F}}_X)\otimes_{\widetilde{\mathbb{Z}}_{X\times F}} (p_2^{-1}\widetilde{\mathbb{Z}}_F))\\
&\cong H^0(X,\widetilde{\mathcal{F}})\otimes_{\mathbb{Z}} H^0(F,\widetilde{\mathbb{Z}}_F)\cong H^0(X,\widetilde{\mathcal{F}})\otimes_{\mathbb{Z}}\mathbb{Z}^n\cong\bigoplus_{i=1}^n H^0(X,\widetilde{\mathcal{F}}).		
\end{aligned}
\end{equation}
Due to $\widetilde{\mathcal{F}}$ not being a constant sheaf it is almost certain that such an elementary relation does not hold when $F$ is replaced by a non-discrete space, the subtlety of the algebraic topological hypotheses being noticeable even in shifting to the realm of locally constant sheaves (e.g. \cite[Theorem 1.7]{RG06}). Indeed, since some sort of coherency/finiteness assumption is frequently necessary for K\"{u}nneth theorems involving non-constant sheaves, one of the main purposes of the next result is to be able to introduce a rigourous framework by which the conjecture that follows can be attacked. 

\begin{mycon}\label{conBd1}
Whether $\mathbf{D}$ possesses the PCVP can be determined entirely by the sheaf cohomology groups $H^p([0,1],\widetilde{\mathcal{F}})$, given that one can prove a K\"{u}nneth theorem result for the product representation $\mathbf{D}\simeq [0,1]\times [0,1]$
\end{mycon}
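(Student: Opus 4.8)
The plan is to establish the stated conditional reduction by pairing the torsion-detection criterion for PCVP with the hypothesized Künneth decomposition, so that everything is ultimately read off from the cohomology of the factor $[0,1]$. First I would record the base case: the unit interval is a CVP space by a double application of the intermediate value theorem, hence in particular a PCVP space, so \Cref{lemBc2} guarantees that the distinguished submodule $\eta(\mathcal{F}([0,1]))\subseteq H^{0}([0,1],\widetilde{\mathcal{F}})$ contains no element of infinite order. By the homeomorphism invariance of the whole construction established in the discussion following \Cref{proBd1}, together with $\mathbf{D}\simeq[0,1]\times[0,1]$, there are torsion-preserving isomorphisms $H^{p}(\mathbf{D},\widetilde{\mathcal{F}})\cong H^{p}([0,1]\times[0,1],\widetilde{\mathcal{F}})$, so by \Cref{corBc1} it suffices to locate the infinite-order sections on the right-hand side.

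Granting a Künneth theorem of the anticipated form
\[
H^{0}(\mathbf{D},\widetilde{\mathcal{F}})\;\cong\;\bigoplus_{i+j=0}H^{i}([0,1],\widetilde{\mathcal{F}})\otimes_{\mathbb{Z}}H^{j}([0,1],\widetilde{\mathcal{F}})\;\oplus\;\bigoplus_{i+j=1}\operatorname{Tor}_{1}^{\mathbb{Z}}\!\big(H^{i}([0,1],\widetilde{\mathcal{F}}),H^{j}([0,1],\widetilde{\mathcal{F}})\big),
\]
I would then carry out the torsion bookkeeping using two elementary facts over the principal ideal domain $\mathbb{Z}$: every group $\operatorname{Tor}_{1}^{\mathbb{Z}}(A,B)$ is torsion, and the tensor product of a torsion module with an arbitrary module is torsion. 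Consequently an infinite-order section of $H^{0}(\mathbf{D},\widetilde{\mathcal{F}})$ can only originate from a tensor summand in which both factors carry a free part, and since the base case forces $H^{0}([0,1],\widetilde{\mathcal{F}})$ to be torsion, the presence or absence of such a free contribution is governed entirely by the free ranks of the groups $H^{p}([0,1],\widetilde{\mathcal{F}})$. Feeding this back through \Cref{lemBc2}, the disk is PCVP precisely when no such free contribution survives into the appropriate submodule—a criterion stated purely in terms of $H^{p}([0,1],\widetilde{\mathcal{F}})$, which is the desired conclusion.

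The hard part—and the reason the statement is advanced only as a conjecture—is manufacturing the Künneth isomorphism itself, since $\widetilde{\mathcal{F}}$ is neither constant nor locally constant, in sharp contrast to the constant sheaf $\widetilde{\mathbb{Z}}_{F}$ that made the discrete computation preceding the conjecture (and \Cref{proBd2}) go through. The natural comparison is with the external tensor product $p_{1}^{-1}\widetilde{\mathcal{F}}\otimes_{\widetilde{\mathbb{Z}}_{\mathbf{D}}}p_{2}^{-1}\widetilde{\mathcal{F}}$, but $\widetilde{\mathcal{F}}$ on $\mathbf{D}$ is assembled from the commuting self-maps of $\mathbf{D}$, and a self-map of $[0,1]^{2}$ need \emph{not} split as a product of self-maps of the two intervals; thus $\widetilde{\mathcal{F}}_{\mathbf{D}}$ may genuinely differ from the external product, and the true Künneth formula may carry a correction term coming from this discrepancy. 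The essential obstacle is therefore to construct a natural morphism between $\widetilde{\mathcal{F}}_{\mathbf{D}}$ and the external product, identify its action on the $(m,n)$-graded pieces of \Cref{defBc1}, and then show that the resulting discrepancy sheaf is itself expressible through the factor cohomology (ideally contributing only torsion). To supply the coherence/flatness input that Künneth theorems for non-constant coefficients require, I would pass to the closure-presheaf $\mathcal{G}$ of \Cref{defBd1}, cover $\mathbf{D}$ by rectangles $\overline{U}=\overline{U}_{1}\times\overline{U}_{2}$ forming an admissible cover, apply the known product computation chartwise, and attempt to glue the local answers by a double-complex or spectral-sequence argument in the spirit of \Cref{thmBd1}; controlling the discrepancy term over such overlaps is precisely the delicate point that the product structure of the maps does not hand us for free.
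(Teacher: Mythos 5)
The statement you are addressing is labelled a conjecture in the paper, and the paper supplies no proof of it: the surrounding text only motivates it via the discrete-fibre K\"{u}nneth computation preceding it and via \Cref{lemBd1}, which is explicitly offered as ``a rigourous framework by which the conjecture that follows can be attacked.'' So there is no argument of record to compare yours against; what can be assessed is whether your outline would, granting the hypothesized K\"{u}nneth isomorphism, actually establish the conditional claim. Your identification of the central obstruction is exactly right and matches the paper's own caution: a self-map of $[0,1]^2$ need not factor through self-maps of the intervals, so $\widetilde{\mathcal{F}}_{\mathbf{D}}$ is not the external tensor product $p_1^{-1}\widetilde{\mathcal{F}}\otimes_{\widetilde{\mathbb{Z}}_{\mathbf{D}}}p_2^{-1}\widetilde{\mathcal{F}}$, and any usable K\"{u}nneth statement must quantify that discrepancy. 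Your suggestion to route the argument through $\mathcal{G}$, admissible rectangular covers, and the quasi-coherence of \Cref{lemBd1} is consistent with the direction the paper gestures at.

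There is, however, a genuine gap in the torsion bookkeeping even under the K\"{u}nneth hypothesis. The PCVP criterion of \Cref{lemBc2} is not a statement about the full group $H^0(X,\widetilde{\mathcal{F}})\cong\Gamma(X,\widetilde{\mathcal{F}})$ but about the distinguished submodule $\eta(\mathcal{F}(X))$; since $\mathcal{F}$ is a monopresheaf but not a sheaf, $\eta$ is injective but generally far from surjective, and $\Gamma(X,\widetilde{\mathcal{F}})$ can contain sections (in particular, possibly non-torsion ones) that are only locally of the form $\eta(s)$. Consequently your assertion that ``the base case forces $H^{0}([0,1],\widetilde{\mathcal{F}})$ to be torsion'' does not follow from $[0,1]$ being a CVP (hence PCVP) space --- that only controls $\eta(\mathcal{F}([0,1]))$ --- and, more seriously, an abstract isomorphism of $H^0(\mathbf{D},\widetilde{\mathcal{F}})$ with a tensor-and-Tor expression in the factor cohomologies does not by itself locate the image of $\eta:\mathcal{F}(\mathbf{D})\longrightarrow\Gamma(\mathbf{D},\widetilde{\mathcal{F}})$ inside that decomposition. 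To close the argument you would need the K\"{u}nneth map to be natural with respect to the presheaf morphisms $\eta$ (in the way the vertical maps in the diagrams of \Cref{thmBd1} are tracked), so that the submodule tested by \Cref{lemBc2} is carried onto something expressible through the submodules $\eta(\mathcal{F}([0,1]))$ and the free ranks of the factor groups. Absent that compatibility, ``determined entirely by $H^p([0,1],\widetilde{\mathcal{F}})$'' is not yet established even conditionally, which is consistent with the paper's decision to leave the statement as a conjecture.
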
  

\begin{mylem}\label{lemBd1}
Let $(X,\widetilde{\mathbb{Z}}_X)$ be a ringed space which is Hausdorff and satisfies a $\mathsf{Homeo}$-PCVP or FPP condition. Then $\widetilde{\mathcal{F}}$ is a quasi-coherent sheaf on $(X,\widetilde{\mathbb{Z}}_X)$. 
\end{mylem}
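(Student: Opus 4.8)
The plan is to realize $\widetilde{\mathcal{F}}$ as a sheaf coproduct of elementary pieces, each of which is the cokernel of a morphism of free $\widetilde{\mathbb{Z}}_X$-modules, and then to note that quasi-coherence passes to such coproducts. First I would fix, for each admissible index $(m,n)\in(\mathbb{N}\times\mathbb{N})\cup\{(\infty,\infty)\}$ with $0<n\leq m$, the summand presheaf $U\mapsto\mathcal{A}^U_{m,n}$ and let $\widetilde{\mathcal{F}}_{m,n}$ be its sheafification. Because the stalk construction of \Cref{defBc2} is a directed colimit and colimits commute with direct sums, the stalk computation already carried out in the proof of \Cref{lemBc2} yields a canonical isomorphism $\widetilde{\mathcal{F}}\cong\bigoplus_{(m,n)}\widetilde{\mathcal{F}}_{m,n}$, the right-hand side being the coproduct in the category of sheaves of $\widetilde{\mathbb{Z}}_X$-modules. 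Since $\widetilde{\mathbb{Z}}_X$ is the constant sheaf and a free module $\widetilde{\mathbb{Z}}_X^{(J)}$ is simply a coproduct of copies of it, a direct sum of \emph{globally} presented modules is again globally presented, by taking the direct sum of the two presenting maps. Thus it suffices to present each $\widetilde{\mathcal{F}}_{m,n}$ individually over all of $X$, which also sidesteps the usual difficulty of finding a common neighbourhood for infinitely many summands.

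For a single summand I would extract the presentation from the distinguished global section $\eta(1_{m,n})$ already isolated in \Cref{lemBc2}, where $1_{m,n}$ denotes a generator of $\mathcal{A}^X_{m,n}$. That proof shows the comparison map $\rho^X_{x,m,n}$ is the identity exactly on the support $Z_{m,n}=\{x\in X:\ (\widetilde{\mathcal{F}}_{m,n})_x\neq 0\}$ and is the zero map on its open complement; hence the morphism $\widetilde{\mathbb{Z}}_X\to\widetilde{\mathcal{F}}_{m,n}$ sending $1\mapsto\eta(1_{m,n})$ is surjective on every stalk and so is an epimorphism of sheaves. Writing $G_{m,n}=\mathbb{Z}$ when $(m,n)=(\infty,\infty)$ and $G_{m,n}=\mathbb{Z}_2$ otherwise, the kernel $\mathcal{K}_{m,n}$ is the subsheaf of $\widetilde{\mathbb{Z}}_X$ whose stalk is the relation subgroup ($2\mathbb{Z}$ in the torsion case, $0$ in the infinite case) at points of $Z_{m,n}$ and all of $\mathbb{Z}$ off $Z_{m,n}$. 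To conclude I would present $\mathcal{K}_{m,n}$ as an image of a free module and splice the two epimorphisms into an exact sequence $\widetilde{\mathbb{Z}}_X^{(A)}\to\widetilde{\mathbb{Z}}_X\to\widetilde{\mathcal{F}}_{m,n}\to 0$.

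The hard part will be exactly this last step: controlling $\mathcal{K}_{m,n}$, and hence the local structure of $\widetilde{\mathcal{F}}_{m,n}$, across the topological boundary of $Z_{m,n}$. Over a connected open set every cokernel of free $\widetilde{\mathbb{Z}}$-modules is forced to be a constant sheaf, because a morphism of free constant modules over such a set is given by a constant matrix; consequently a free presentation of $\widetilde{\mathcal{F}}_{m,n}$ on a connected neighbourhood can exist only where the stalk does not jump. The crux is therefore to prove that $Z_{m,n}$ is open, equivalently clopen on connected components: that the condition ``$\widetilde{C}(X,X)^U_{(m,n)}\neq\emptyset$ for \emph{every} open $U\ni x$'' is a locally constant function of $x$. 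This is a semicontinuity statement for how the minimal coincidence pair of a commuting family behaves under shrinking the open set, and it is precisely here that the standing hypotheses on $X$ must be used — paracompactness together with the homogeneity supplied in the intended examples by local triviality of a fibre bundle or a transitive action of $\mathsf{Homeo}(X)$. Once each $Z_{m,n}$ is known to be clopen, I would split $\widetilde{\mathbb{Z}}_X$ as the direct sum of its restrictions to $Z_{m,n}$ and to the complement and present $\widetilde{\mathcal{F}}_{m,n}$ by the constant morphism $\times 2$ (respectively the identity, and the zero map off the support), which recovers quasi-coherence and, reassembling the summands, establishes the lemma.
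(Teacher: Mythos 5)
Your overall strategy is the same as the paper's: decompose $\widetilde{\mathcal{F}}$ into the summands indexed by $(m,n)$ and present each one as the cokernel of one of the three elementary maps $\times 0$, $\times 2$, $\times 1$ between copies of the constant sheaf $\widetilde{\mathbb{Z}}_X$. The difference is one of bookkeeping: the paper chooses, for each open $U$ separately, the presenting maps according to whether $\mathcal{A}^U_{m,n}$ is $\mathbb{Z}$, $\mathbb{Z}_2$ or $\{0\}$, asserts that the resulting sequence of presheaves on $U$ is exact, and sheafifies; you instead insist on a single presentation valid on a whole (connected) open set, and therefore confront directly the question of whether the chosen matrix remains a presentation after restriction.

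That question is exactly where your proposal has a genuine gap. You correctly observe that a cokernel of a morphism of free $\widetilde{\mathbb{Z}}_X$-modules over a connected open set is forced to be (locally) constant, so your plan can only succeed if each support $Z_{m,n}=\{x:(\widetilde{\mathcal{F}}_{m,n})_x\neq 0\}$ is clopen; but you then defer the proof of clopenness to ``the standing hypotheses on $X$'' without giving an argument, and none is available at this level of generality. By the stalk computation in \Cref{lemBc2}, $x\in Z_{m,n}$ precisely when \emph{every} open neighbourhood $U$ of $x$ satisfies $\widetilde{C}(X,X)^U_{(m,n)}\neq\emptyset$; this is a closure-type condition, so $Z_{m,n}$ is closed, and nothing in the hypotheses (Hausdorff plus $\mathsf{Homeo}$-PCVP or FPP --- note the lemma does not even assume paracompactness or any homogeneity) forces it to be open. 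On a connected space any nonempty proper $Z_{m,n}$ then defeats your construction, since the kernel $\mathcal{K}_{m,n}$ you describe has stalk $2\mathbb{Z}$ on $Z_{m,n}$ and $\mathbb{Z}$ off it and so admits no free presentation near a boundary point. So the self-declared crux of your argument is missing, not merely postponed. I will add, for fairness, that the paper's own proof does not supply the missing step either --- its presheaf sequence fails to be exact in the middle on those $V\subseteq U$ where $\mathcal{A}^V_{m,n}$ drops to $\{0\}$ while the presenting map is still $\times 2$ --- so you have located the genuinely delicate point rather than overlooked an idea the paper provides; but identifying the difficulty is not resolving it, and as written your proof is incomplete.
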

\begin{proof}
By definition of $\mathcal{F}$ and the constant presheaf $\mathbb{Z}_X$, for any open subset $U\subseteq X$ 
\[\mathbb{Z}_X(U)=\mathbb{Z}\qquad\mathrm{and}\qquad\mathcal{F}(U)\subseteq\mathbb{Z}\oplus\left(\bigoplus_{i\in I}\mathbb{Z}_2\right)\]
where $I=\{(m,n)\in\mathbb{N}\times\mathbb{N}:\;0<n\leq m\}$. For each $j\in J=I\cup\{0,0\}$ define a pair of morphisms $(\varphi_j,\phi_j)$ which fit as needed into the following exact sequences in the obvious way
\[\begin{tikzcd} 
\mathbb{Z} \arrow[r,"\varphi_j\equiv(\times0)"]& \mathbb{Z} \arrow[r,"\phi_j\equiv(\times1)"] &\mathbb{Z}
\end{tikzcd}\qquad
\begin{tikzcd} 
\mathbb{Z} \arrow[r,"\varphi_j\equiv(\times2)"]& \mathbb{Z} \arrow[r,"\phi_j"] &\mathbb{Z}_2
\end{tikzcd}\qquad
\begin{tikzcd} 
\mathbb{Z} \arrow[r,"\varphi_j\equiv(\times1)"]& \mathbb{Z} \arrow[r,"\phi_j"] & 0
\end{tikzcd}\]
Extending on the right by 0, it is clear that these sequences remain exact, thus for each open $U$ we have the following exact sequence of modules which thus implies an exact sequence of presheaves
\begin{equation}
\begin{gathered}
\begin{tikzcd} 
\bigoplus_{j\in J}\mathbb{Z}_X(U) \arrow[r,"\varphi_U"]& \bigoplus_{j\in J}\mathbb{Z}_X(U) \arrow[r,"\phi_U"] &\mathcal{F}(U) \arrow[r] & 0
\end{tikzcd}\\
\begin{tikzcd} 
\bigoplus_{j\in J}\mathbb{Z}_X|_U \arrow[r,"\varphi"]& \bigoplus_{j\in J}\mathbb{Z}_X|_U \arrow[r,"\phi"] &\mathcal{F}|_U \arrow[r] & 0
\end{tikzcd}
\end{gathered}
\end{equation}
Here $(\varphi_U,\phi_U)=(\oplus_j\varphi_j,\oplus_j\phi_j)$ with $\varphi_U$ and $\phi_U$ the corresponding maps of presheaves, while $\mathcal{F}|_U$ denotes the restricted presheaf with respect to $U$. Since the sheafification operation preserves exactness and commutes with both direct sums and the presheaf restriction, we obtain an exact sequence of sheaves
\[\begin{tikzcd} 
\bigoplus_{j\in J}\widetilde{\mathbb{Z}}_X|_U \arrow[r,"\widetilde{\varphi}"]& \bigoplus_{j\in J}\widetilde{\mathbb{Z}}_X|_U \arrow[r,"\widetilde{\phi}"] &\widetilde{F}|_U \arrow[r] & 0
\end{tikzcd}\]
which holds for every $U$. Clearly this must also be true for any open neighbourhood of an arbitrary point $x$, thus $\widetilde{\mathcal{F}}$ is quasi-coherent. 
\end{proof}

\bibliographystyle{plainurl}
\bibliography{CVPReferences}
\textsc{Department of Mathematics, Texas A\&M University}\par\nopagebreak
\noindent\textit{E-mail address}: \texttt{s.a.k.a.john@tamu.edu}
\end{document}